\def\XXint#1#2#3{{\setbox0=\hbox{$#1{#2#3}{\int}$}
    \vcenter{\hbox{$#2#3$}}\kern-.5\wd0}}
\def\({\left(}
\def\){\right)}
\def\RR{{\mathbb{R}}}
\def\CC{{\mathbb{C}}}
\newcommand{\weak}{\rightharpoonup}
\newcommand{\dist}{{\rm dist\, }}
\newcommand{\NN}{{\bf N}}
\newcommand{\eps}{\epsilon}
\renewcommand{\Re}{{\rm Re\,}}
\newcommand{\be}{\begin{equation}}
\newcommand{\ee}{\end{equation}}
\newcommand{\bea}{\begin{eqnarray}}
\newcommand{\eea}{\end{eqnarray}}
\newcommand{\beann}{\begin{eqnarray*}}
\newcommand{\eeann}{\end{eqnarray*}}
\newcommand{\nnn}{\nonumber}
 \newtheorem{theorem}{Theorem}[section]
\newtheorem{corollary}[theorem]{Corollary}
\newtheorem{remark}{Remark}[section]
\newtheorem{lemma}[theorem]{Lemma}
\newtheorem{claim}{Claim}
\newtheorem{prop}[theorem]{Proposition}
\newenvironment{proof1}{
    \noindent {\em Proof }}{\hfill$\Box$}
\begin{document}
\title{\bf Domain walls in the coupled Gross-Pitaevskii equations}
\author{Stan Alama, Lia Bronsard, Andres Contreras, and Dmitry E. Pelinovsky \\
{\em Department of Mathematics and Statistics, McMaster University} \\
{\em Hamilton ON, Canada, L8S 4K1}}

\thispagestyle{empty}
\maketitle

\begin{abstract}

A thorough study of domain wall solutions in coupled Gross-Pitaevskii equations on the real line
is carried out including existence of these solutions; their spectral and nonlinear stability;
their persistence and stability under a small localized potential. The proof of existence
is variational and is presented in a general framework: we show that the domain wall solutions are energy minimizing within a class of vector-valued functions with  nontrivial conditions at infinity.  The admissible energy functionals include those corresponding to coupled
Gross--Pitaevskii equations, arising in modeling of Bose-Einstein condensates. The results on spectral and nonlinear stability follow from properties
of the linearized operator about the domain wall. The methods apply to many systems of interest
and integrability is not germane to our analysis. Finally, sufficient conditions for
persistence and stability of domain wall solutions are obtained to show that stable pinning
occurs near maxima of the potential, thus giving rigorous justification to earlier results in the physics literature.
\end{abstract}



\section{Introduction}

Domain walls are ubiquitous in physical systems. The purpose of the present work is to initiate a rigorous analysis of this phenomenon by placing it in a general variational framework. We are interested in existence
and stability of these domain walls as well as in their dynamical properties.  The study is fairly complete and covers general existence and asymptotic properties of the solutions, spectral and orbital stability, and it also includes the case of a small localized potential where the spectral stability of these solutions is completely characterized.
Our perspective is mainly variational, also including some perturbation analysis in the case
of small localized potentials.

Consider the system of coupled Gross--Pitaevskii equations,
\be\label{GP}
\left. \begin{aligned}
i\partial_t \psi_1 = -\partial_x^2 \psi_1 + (g_{11}|\psi_1|^2 + g_{12}|\psi_2|^2)\psi_1, \\
i\partial_t \psi_2 = -\partial_x^2 \psi_2 + (g_{12}|\psi_1|^2 + g_{22}|\psi_2|^2)\psi_2,
\end{aligned}\right\}
\ee
where the cross-interaction terms are taken to be equal to preserve the Hamiltonian structure of the
underlying equations. The system  \eqref{GP} may be seen as the simplest model for domain walls in the real line.

Domain walls occur in many physical experiments, such as convection in fluid
dynamics \cite{M1,M2} and polarization modulation instability in fiber optics \cite{HS1,HS2}.
Recently, domain wall solutions were discussed in the coupled Bose--Einstein condensates,
both in one and two dimensions \cite{Dror}, and this is the prime motivation for our study, as the one-dimensional domain walls should represent the  leading order term in an expansion of the
energy of a two-component Bose-Einstein condensate.

 For simplicity, start with the model case $g_{11}=1=g_{22}$ and $\gamma=g_{12}$.
Stationary solutions of the form $\psi_1=e^{-it\mu_1}u_1$, $\psi_2=e^{-it\mu_2}u_2$
with real-valued envelopes $u_1$, $u_2$ and normalization $\mu_1=1=\mu_2$
satisfy the system of differential equations
\begin{equation}\label{hc}
\left.\begin{aligned}
- u''_1(x) + \left( u_1^2 + \gamma u_2^2 -1\right)u_1 &=0, \\
-u''_2(x) + \left( \gamma u_1^2 + u_2^2 -1\right)u_2 & =0.
\end{aligned}\right\} \quad x\in\RR.
\end{equation}

We seek  nonnegative solutions of system \eqref{hc},
with heteroclinic boundary conditions at infinity,
\begin{gather}
\label{minus}
u_1(x)\to 0, \quad u_2(x)\to 1, \qquad\text{as $x\to-\infty$,}\\
\label{plus}
u_1(x)\to 1, \quad u_2(x)\to 0, \qquad\text{as $x\to +\infty$.}
\end{gather}
In the special case $\gamma = 3$, such solutions are known explicitly \cite{M1,Dror}:
\begin{equation}
\label{exact}
\gamma = 3: \quad u_{1,2}(x) = \frac{1}{2} \left[ 1 \pm \tanh\left(\frac{x}{\sqrt{2}}\right) \right].
\end{equation}
Apart from the integrable cases $\gamma=1,3$, generally there is no explicit formula for the domain wall solutions. However, we will prove that such solutions exist for any $\gamma > 1$, and in fact for a large class of systems of two Hamiltonian PDE.  In addition, we will prove that
they are spectrally and nonlinearly stable. Lastly, we will add
a small localized potential to the coupled Gross--Pitaevskii equations \eqref{GP}
and prove the early observation in \cite{Dror} that the domain walls persist
near the nondegenerate extremum points of the small potentials and become
spectrally stable (unstable) near the maximum (minimum) points, thus providing a very complete picture of this phenomenon.

For the main result, our technique is variational. Therefore,
we introduce the general energy functional, for functions $\psi_j: \RR\to \CC$, $j=1,2$,
\begin{equation}
\label{energy}
E(\psi_1,\psi_2) = \frac12\int_{-\infty}^{\infty}
\left( |\nabla\psi_1|^2 + |\nabla\psi_2|^2 + W(\psi_1,\psi_2)\right) dx,
\end{equation}
where the appropriate choice of potential $W$ corresponding to equations \eqref{hc} is:
\be\label{W}
W(\psi_1,\psi_2)=\frac12 (|\psi_1|^2+|\psi_2|^2 -1)^2 + (\gamma-1)|\psi_1|^2|\psi_2|^2.
\ee
Denote by $\Psi=(\psi_1,\psi_2)$, and let $\RR^2_+$ be the set of vectors in $\RR^2$ with nonnegative coordinates: $(x,y)\in\RR^2_+$ if $x,y\ge 0$.
The potential $W$ then satisfies the following general properties:
\begin{enumerate}
\item[(W1)]  $W(\Psi)=W(|\psi_1|,|\psi_2|)=F(|\psi_1|^2,|\psi_2|^2)$ for $F\in C^3(\RR^2;\RR)$.
\item[(W2)]  $W(\Psi)\ge 0$ for all $\Psi\in\CC^2$, and there exist  $a,b>0$,  so that $W(\Psi)=0$ if and only if
$(|\psi_1|,|\psi_2|)=\mathbf a=(a,0)$ or $\mathbf b=(0,b)$.
\item[(W3)]   $\mathbf a$, $\mathbf b$ are non-degenerate global minima of $W$ (when restricted to $\RR^2$.)
\item[(W4)]  There exist constants $R_0,c_0>0$ such that
$$
\nabla W(U)\cdot U\ge  c_0  |U|^2
\quad \mbox{\rm for all $U\in\RR^2_+$ with} \quad |U|\ge R_0.
$$
\end{enumerate}

We will show that the above properties are sufficient for the existence 
of domain wall solutions, and also nearly sufficient for many of their properties, 
including dynamical stability.  A great variety of coupled equations of 
nonlinear Schr\"odinger type fit the above framework.  For instance, 
taking the general form of the coupled Gross-Pitaevskii equations \eqref{GP} 
with arbitrary $g_{11},g_{22}>0$ and   $g_{12}>\sqrt{g_{11}g_{22}}$, 
we may seek stationary domain wall solutions of the form 
$\psi_1=e^{-it\mu\sqrt{g_{11}}}u_1(x)$ and $\psi_2=e^{-it\mu\sqrt{g_{22}}}u_2(x)$, 
with $\mu>0$ any constant.  The resulting system for $U=(u_1,u_2)$ takes the form
\be\label{gen}\left.
\begin{gathered}
-u''_1 + g_{11}\left(u_1^2-a^2\right)u_1 + g_{12} u_1u_2^2=0 \\
-u''_2 + g_{22}\left(u_2^2-b^2\right)u_2 + g_{12} u_1^2u_2=0
\end{gathered}
\right\}
\ee
with
\be\label{genlim}
U(x)\to \mathbf{a}:= (a,0), \quad \text{as $x\to\infty$,}\qquad
U(x)\to \mathbf{b}:= (0,b), \quad \text{as $x\to-\infty$,}
\ee
where
$$  a= {\sqrt{\mu}\over \sqrt[4]{g_{11}}}, \qquad b= {\sqrt{\mu}\over \sqrt[4]{g_{22}}}.
$$
For this more general domain wall system, the corresponding potential is
\be\label{genpot}
W(\psi_1,\psi_2)= \frac12 (\sqrt{g_{11}}|\psi_1|^2 + \sqrt{g_{22}}|\psi_2|^2 -\mu)^2 + \left(g_{12}-\sqrt{g_{11}g_{22}}\right) |\psi_1|^2|\psi_2|^2,
\ee
which also satisfies the conditions (W1)--(W4) above, provided
$$g_{12}>\sqrt{g_{11}g_{22}}$$  
a hypothesis which we make throughout the paper.

Many other coupled Schr\"odinger systems with Hamiltonian structure 
may be treated by choosing different potentials satisfying (W1)--(W4).  
For instance,
\begin{equation}\label{ho}
W(\Psi)=\frac14(|\psi_1|^4 + |\psi_2|^4-1)^2 + \frac{\gamma-1}{2}|\psi_1|^4 |\psi_2|^4,
\end{equation}
with $\gamma>1$, is another admissible energy functional, 
which generates the system of coupled Gross--Pitaevskii equations,
$$  
\left. \begin{aligned}
i\partial_t \psi_1 = -\partial_x^2 \psi_1 + (|\psi_1|^4 + \gamma |\psi_2|^4 - 1) |\psi_1|^2 \psi_1, \\
i\partial_t \psi_2 = -\partial_x^2 \psi_2 + (\gamma |\psi_1|^4 + |\psi_2|^4 - 1) |\psi_2|^2 \psi_2,
\end{aligned}\right\}
$$
with the domain wall solutions satisfying asymptotic conditions 
$\Psi(x)\to(0,1)$ as $x\to-\infty$ and $\Psi(x)\to(1,0)$ as $x\to\infty$.  
Thus, we may consider systems other than the standard cubic Gross-Pitaevskii equations \eqref{GP}.

To find solutions with the desired conditions \eqref{genlim} at infinity,
we first start with a very weak topology.  Call  $X$ the class of all
$U=(u_1,u_2)\in H^1_{loc}(\RR;\RR^2)$ which satisfy the asymptotic conditions \eqref{genlim}.
Define also $Y$ to be the class of complex-valued
$\Psi=(\psi_1,\psi_2)\in H^1_{loc}(\RR;\CC^2)$ satisfying
$U := (|\psi_1(x)|,|\psi_2(x)|)\to\mathbf a$ as $x\to\infty$ and $U \to\mathbf b$ as $x\to -\infty$.
Although neither space is closed under $H^1_{loc}$ convergence, we will nevertheless obtain
convergence in the stronger topology defined by the family of distances, (see \cite{Bethuel})
\begin{equation}\label{distance}
\rho_A(\Psi,\Phi):= \sum_{j=1,2} \left[ \bigl\| \psi'_j - \varphi'_j \bigr\|_{L^2(\RR)}
  + \bigl\| |\psi_j|-|\varphi_j| \bigr\|_{L^2(\RR)} +   \bigl\| \psi_j -\varphi_j\bigr\|_{L^\infty(-A,A)}\right],
\end{equation}
where $A>0$ is a fixed constant. Our main existence result is given by the following theorem.

\begin{theorem}
\label{intro}
Assume $W$ satisfies (W1)--(W4).  Define
$$ m=\inf_{\Psi\in Y}  E(\Psi).
$$
Then there exists $\Psi=(\psi_1,\psi_2)\in Y$ which attains the infimum of $E$ in $Y$.
Moreover, every minimizer has the form $\psi_1=e^{i\beta_1}u_1$,
$\psi_2=e^{i\beta_2}u_2$, for nonnegative real-valued $U=(u_1,u_2)\in X$ and $\beta_1,\beta_2\in \RR$ constants.  Furthermore,
for any minimizing sequence $\Psi_n\in Y$, $E(\Psi_n)\to m$, there exists a minimizer $\Psi\in Y$, a sequence $\tau_n\in\RR$, and a subsequence for which
$$\rho_A(\Psi_{n_k}(\cdot+\tau_{n_k}),\Psi(\cdot))\to 0, \mbox{ as } k\to \infty$$  holds for all constants $A>0 .$
\end{theorem}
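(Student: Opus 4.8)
The plan is to establish the theorem by the direct method of the calculus of variations, but carried out carefully because the natural weak-$H^1_{loc}$ limit need not respect the boundary conditions at $\pm\infty$. I would begin by reducing to real nonnegative competitors: given any $\Psi=(\psi_1,\psi_2)\in Y$, the diamagnetic-type inequality $|\nabla|\psi_j||\le|\nabla\psi_j|$ together with property (W1), $W(\Psi)=W(|\psi_1|,|\psi_2|)$, shows $E(|\psi_1|,|\psi_2|)\le E(\Psi)$, with equality forcing $\psi_j=e^{i\beta_j}|\psi_j|$ for constants $\beta_j$ (on each connected component where $\psi_j\ne0$; one must check $\psi_j$ does not vanish on a set that would allow the phase to jump, which follows once we know the minimizer solves the Euler–Lagrange system and hence is real-analytic, or alternatively by a more elementary lifting argument). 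Thus $m=\inf_{U\in X,\,U\ge0}E(U)$ and it suffices to produce a minimizer in this smaller class; the structure statement $\psi_1=e^{i\beta_1}u_1$, $\psi_2=e^{i\beta_2}u_2$ then comes for free.

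Next I would take a minimizing sequence $U_n\in X$, $U_n\ge0$, $E(U_n)\to m$. The energy bound gives $\|U_n'\|_{L^2}$ bounded and $\int W(U_n)\to$ finite, so by (W4) (coercivity of $\nabla W\cdot U$ for large $|U|$, hence a growth lower bound on $W$ away from the wells) one gets that $U_n$ is bounded in $L^\infty$ — here one uses that $U_n$ cannot spend too much ``length'' far from $\{\mathbf a,\mathbf b\}$ — and bounded in $H^1_{loc}$. Now the crucial normalization: the translation invariance of $E$ means a minimizing sequence may drift to $\pm\infty$, so I would fix the translates $\tau_n$ by a ``wall location'' condition, e.g.\ $\tau_n$ chosen so that $|U_n(\tau_n)-\tfrac12(\mathbf a+\mathbf b)|$ is minimal, or so that the first coordinate first reaches the value $a/2$ at $x=0$; replace $U_n$ by $U_n(\cdot+\tau_n)$. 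Along a subsequence, $U_n\weak U$ weakly in $H^1_{loc}$ and locally uniformly, with $U\ge0$ and $E(U)\le\liminf E(U_n)=m$ by weak lower semicontinuity (the gradient term is convex, the potential term passes to the limit by Fatou since $W\ge0$ and $U_n\to U$ a.e.).

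The heart of the argument — and the step I expect to be the main obstacle — is showing the limit $U$ genuinely lies in $X$, i.e.\ that no mass of the ``transition'' escapes to infinity, so that $U(x)\to\mathbf a$ as $x\to+\infty$ and $U(x)\to\mathbf b$ as $x\to-\infty$. The danger is that $\int_{\RR}W(U)$ could be strictly less than $m$ with $U$ connecting, say, $\mathbf b$ to $\mathbf b$, the ``missing'' energy having leaked into a far-away bump. To rule this out I would use a concentration-compactness / no-dichotomy argument tailored to $1$D: first show that because $\int W(U_n)$ is finite and $W$ vanishes only at the two wells with nondegenerate minima (W2)--(W3), for every $\delta>0$ there is $\Lambda$ such that on $(-\infty,-\Lambda)$ and on $(\Lambda,\infty)$ each $U_n$ stays (uniformly in $n$, after the normalization) within $\delta$ of $\{\mathbf a,\mathbf b\}$ — a standard ``clearing out'' estimate using that crossing the region between the wells costs a fixed positive amount of energy bounded below (a $1$D analogue of the modica-type lower bound). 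Combined with the normalization pinning the wall near $x=0$, this forces $U_n\to\mathbf a$ near $+\infty$ and $\to\mathbf b$ near $-\infty$ uniformly in $n$, hence the same for $U$, so $U\in X$ and $U$ is a minimizer. Finally, upgrading weak convergence to the metric $\rho_A$: $E(U_n)\to E(U)$ together with $E(U_n)\to m=E(U)$ and the already-established convergence of the potential terms forces $\|U_n'\|_{L^2}\to\|U'\|_{L^2}$, which with weak $L^2$ convergence of the derivatives gives strong $L^2$ convergence $U_n'\to U'$; the locally uniform convergence handles the $L^\infty(-A,A)$ term; and $\||u_{j,n}|-|u_j|\|_{L^2}\to0$ follows from the uniform smallness near $\pm\infty$ plus local convergence — which is exactly the statement $\rho_A(U_n,U)\to0$ for every $A$. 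Reinstating general complex minimizing sequences $\Psi_n$ and using the first paragraph's reduction completes the proof.
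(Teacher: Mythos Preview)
Your plan is essentially the paper's own argument: reduce to nonnegative real competitors via $W(\Psi)=W(|\psi_1|,|\psi_2|)$ and the pointwise inequality $|(|\psi_j|)'|\le|\psi_j'|$, normalize by translation, extract a weak $H^1_{loc}$ limit by the bounds coming from (W4), rule out loss of the boundary conditions by an energy-comparison argument, and then upgrade to $\rho_A$ by showing $\int W(U_n)\to\int W(U)$ and hence $\|U_n'\|_{L^2}\to\|U'\|_{L^2}$.  The organization differs slightly --- the paper first proves that any finite-energy $\Psi$ has $W(\Psi(x))\to 0$ at $\pm\infty$ (its Lemma~\ref{limits}), and only afterward proves uniform $L^\infty$ convergence of $U_n$ to $U$ as part of the $\rho_A$ upgrade, whereas you propose to establish uniform-in-$n$ clearing out of the sequence first --- but the ingredients are the same.

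One point does need sharpening.  Your ``clearing out'' step invokes only that ``crossing the region between the wells costs a fixed positive amount of energy,'' i.e.\ the Modica-type bound $\int e(U)\ge\int_\sigma\sqrt{W}\,ds$.  That alone does not suffice: knowing each excursion costs some $c_0>0$ only bounds the number of excursions by $m/c_0$ and does not prevent several of them from drifting to infinity.  What is actually required --- and what the paper isolates as Lemma~\ref{ABG}, following \cite{ABGui} --- is the \emph{sharp} lower bound that any partial profile joining a $\delta$-neighborhood of $\mathbf b$ to a $\delta$-neighborhood of $\mathbf a$ already carries energy at least $m - C_1\delta^2$.  The contradiction then reads: the normalization forces a passage from near $\mathbf b$ through the diagonal near $x=0$, contributing a fixed $w_0D>0$, while if the limit had the wrong endpoint (or an excursion escaped) one locates a \emph{separate} interval on which $U_n$ makes a full $\mathbf b\to\mathbf a$ transition, contributing $\ge m-C_1\delta^2$; summing gives $E(U_n)\ge m+\tfrac12 w_0D$ for small $\delta$.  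Your outline is correct once this sharper lemma is inserted in place of the bare Modica bound.
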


A more detailed theorem, giving essential properties of the minimizing domain wall solutions, 
is presented in Section 2;  see Theorem~\ref{existence}.  In particular, the solutions have 
exponential convergence as $x\to\pm\infty$ to their asymptotic limits, and in the symmetric case \eqref{hc}, 
all minimizers satisfy $0 \leq u_1(x), u_2(x) \leq 1$ and 
there exists a minimizer which is symmetric about $x=0$, $u_2(x)=u_1(-x)$.

\begin{remark}
{\rm In the symmetric case \eqref{hc}, there exists another equilibrium state
$$
{\bf c} = \left(\frac{1}{\sqrt{1+\gamma}},\frac{1}{\sqrt{1+\gamma}}\right)
$$
inside the range, where the domain wall solutions are defined. The equilibrium state ${\bf c}$ corresponds
to the center-saddle point of the dynamical system (\ref{hc}). It was reported with the numerical shooting
method in \cite{HS1} that the domain wall solutions with the symmetry $u_2(x) = u_1(-x)$
satisfy $u_2(0) = u_1(0) = \frac{1}{\sqrt{1+\gamma}}$ for any $\gamma > 1$. It remains open in the variational theory
to prove this result.}
\end{remark}

\begin{remark}
{\rm We do not know if the minimizer found in Theorem~\ref{existence} is unique. However, in Section 5, 
see Proposition~\ref{isolated}, we prove that under some general hypotheses 
the set of all energy-minimizing domain walls is discrete.}
\end{remark}

The existence of heteroclinics connecting the wells of a bistable potential $W$ have been proven by many authors.
Sternberg \cite{Stern} gave an existence proof by characterizing the heteroclinics as geodesics in a degenerate
metric, a point of view which we adopt in proving Theorem~\ref{intro}.  Connecting orbits for symmetric
potentials were found in the case of multiple-well potentials by Bronsard, Gui, \& Schatzman \cite{BGS} and
Alama, Bronsard, \& Gui \cite{ABGui}. A more general existence theorem, in the absence of symmetry hypotheses,
was found by Alikakos \& Fusco \cite{AF}, by employing constraints.  For the stability (linear and nonlinear)
we require the stronger convergence in the distance $\rho_A$ of unconstrained minimizing sequences, and thus our result is an
improvement on previous work for the two-well case.

\medskip

The existence of domain wall solutions having been established, we turn to the questions of linear and nonlinear stability, with respect to the Hamiltonian flow,
\be\label{Ham}
i\partial_t\psi_1 = -\partial_x^2\psi_1 + \partial_1 F(|\psi_1|^2,|\psi_2|^2)\psi_1, \qquad
i\partial_t\psi_2 = -\partial_x^2\psi_2 + \partial_2 F(|\psi_1|^2,|\psi_2|^2)\psi_2.
\ee
Linear and nonlinear stability of the domain wall solutions $U$ will also be proven under rather general hypotheses, nevertheless slightly more restrictive than were necessary for their existence.  The first step is to consider the linearization about $U$, $D^2 E(U)$, which is defined in $H^1_0(\mathbb{R};\mathbb{C}^2).$
Let us consider any admissible $\Phi=\Phi_R + i\Phi_I$ and  let us express $\Phi_R=(\varphi_{1,R},\varphi_{2,R})$ and
$\Phi_I=(\varphi_{1,I},\varphi_{2,I})$ in their real and imaginary parts.
We associate to the quadratic form $D^2 E(U)$ two self-adjoint linearizations, which decompose the second variation as 
$$
D^2E(U)\Phi = L_+\Phi_R + i L_-\Phi_I, 
$$
with self-adjoint operators
$$
L_+\Phi_R=\Phi''_R + \frac12 D^2W(U)\Phi_R, \qquad
L_-\Phi_I=\Phi''_I + DF(u_1^2,u_2^2):\Phi_I,
$$
where we denote $v:w=(v_1w_1, v_2w_2)$ for $v,w\in\RR^2$.
In Section 3, see Theorem~\ref{secondvar}, we prove that both $L_\pm$ 
are positive semi-definite.  In addition, $L_+$ has a zero eigenvalue, 
corresponding to the eigenfunction $U'(x)$, but the essential spectrum is bounded away from zero.

An important issue is the simplicity of the zero eigenvalue of $L_+$, 
which is sensitive to the form of the potential $W$.  Indeed, if we 
choose $g_{12}=0$ in \eqref{gen}, the equations decouple and the zero 
eigenvalue of $L_+$ will have multiplicity two.  As part of Theorem~\ref{secondvar}, 
we give a sufficient condition on $W(\Psi)=F(|\psi_1|^2,|\psi_2|^2)$ for which zero is a simple eigenvalue,
\begin{enumerate}
\item[(W5)]  $\partial_1\partial_2 F(\xi_1,\xi_2)\gneq 0$ for all $\xi=(\xi_1,\xi_2)\in\RR^2_+$,
\end{enumerate}
a condition which is satisfied by the examples \eqref{hc}, \eqref{gen}, and \eqref{ho} given above.  
For such $W$, we may also conclude the strict monotonicity of the profiles $U(x)=(u_1(x),u_2(x))$.
From this spectral analysis we also obtain a spectral stability result in the spirit of 
the work of Di Menza and Gallo \cite{Gallo} on the black soliton for the NLS equation.

\begin{theorem}
\label{theorem-stability}  If $U\in X$ is a minimizer of $E$, then the associated spectral problem
\be\label{eigenvalue}
L_+\Phi_R = -\lambda\Phi_I, \qquad L_-\Phi_I =\lambda \Phi_R
\ee
has no eigenvalues $\lambda$ with ${\rm Re}(\lambda) \neq 0$.
\end{theorem}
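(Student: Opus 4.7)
The plan is to reduce the coupled spectral system to a scalar relation for the composition $L_+L_-$ and then to exploit the sign information on $L_\pm$ supplied by Theorem~\ref{secondvar}. The case $\lambda=0$ is trivial, so I assume $\lambda\neq 0$ with a nontrivial $L^2$ eigenpair $(\Phi_R,\Phi_I)$. First, I would observe that $\Phi_I\neq 0$: otherwise the first equation gives $\Phi_R\in\ker L_+$, and then $L_-\Phi_I=\lambda\Phi_R$ forces $\Phi_R=0$, contradicting nontriviality of the eigenpair.

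Next, eliminating $\Phi_R=\lambda^{-1}L_-\Phi_I$ from the first equation would produce the scalar identity
$$L_+L_-\Phi_I=-\lambda^2\,\Phi_I.$$
From the first equation and self-adjointness of $L_+$, one automatically has $\Phi_I\in\mathrm{Range}(L_+)\subseteq(\ker L_+)^\perp$. Theorem~\ref{secondvar} ensures $L_+\geq 0$ with essential spectrum bounded away from zero (the latter following from Weyl's theorem and the fact that the asymptotic states $\mathbf{a},\mathbf{b}$ are non-degenerate minima of $W$), so $L_+\big|_{(\ker L_+)^\perp}$ is positive definite and boundedly invertible. In particular $L_+^{-1}\Phi_I$ is a well-defined $H^2$ function.

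Pairing the scalar identity with $L_+^{-1}\Phi_I$ in $L^2$ and using self-adjointness of $L_+$ yields
$$\langle L_-\Phi_I,\Phi_I\rangle=-\lambda^2\,\langle L_+^{-1}\Phi_I,\Phi_I\rangle.$$
The left side is $\geq 0$ because $L_-\geq 0$, and the right factor $\langle L_+^{-1}\Phi_I,\Phi_I\rangle$ is strictly positive by positive-definiteness of $L_+^{-1}$ on $(\ker L_+)^\perp$ together with $\Phi_I\neq 0$; hence $-\lambda^2\geq 0$ and $\lambda\in i\RR$. The only obstacle in this plan is functional-analytic: justifying the self-adjoint calculus for $L_+^{-1}$ on the invariant subspace $(\ker L_+)^\perp$, which requires precisely the spectral gap of $L_+$ at zero supplied by Theorem~\ref{secondvar}. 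This is the standard Hamiltonian spectral argument, as used by Di Menza and Gallo~\cite{Gallo} for the NLS black soliton.
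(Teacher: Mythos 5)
Your proposal is correct and follows the same essential strategy as the paper: both reduce the coupled system to the Rayleigh quotient $-\lambda^2 = \langle L_-\Phi_I,\Phi_I\rangle / \langle L_+^{-1}\Phi_I,\Phi_I\rangle$ on $(\ker L_+)^\perp$ and invoke the positivity of $L_-$ and the spectral gap of $L_+$ from Theorem~\ref{secondvar}. The only presentational difference is that you eliminate $\Phi_R$ via the $L_-$ equation and pair directly, whereas the paper projects via $P$ onto $(\ker L_+)^\perp$ and writes down the generalized eigenvalue problem $PL_-P\Phi_I = -\lambda^2 PL_+^{-1}P\Phi_I$ first; your explicit check that $\Phi_I\neq 0$ is a small but welcome addition that the paper leaves implicit.
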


We note that Theorem \ref{theorem-stability} holds even if zero is a multiple (semi-simple) eigenvalue of $L_+$.  In that case, it is unnatural to claim that the system is spectrally stable, as the presence of a null vector which is not accounted for by symmetries (translation invariance, in our case) usually signals a bifurcation of stationary solutions.  In this case, perturbations from the domain wall may grow algebraically in time and no linear or nonlinear stability of the solutions may be established.  Therefore, to establish stability of domain wall solutions we restrict our attention to the case where zero is a simple eigenvalue of $L_+$, which is ensured by the hypothesis (W5).

Nonlinear stability of non-degenerate domain wall solutions can be thought to be 
a natural consequence of the minimizing character of these solutions in the energy functional $E$. 
We note, however, that the fact that domain walls have
nontrivial boundary conditions at infinity presents additional challenges, as the dynamics
in this situation is not ruled by scattering. While we have existence of the domain wall solutions, 
there is no uniqueness result, and no explicit formula for the solutions. The combination
of these two features makes the problem quite subtle.  Having that in mind, define the energy space,
\begin{equation}\label{D}
\mathcal{D}:=\{\Psi\in H^1_{loc}(\RR;\RR^2) : \ E(\Psi)<\infty\}.
\end{equation}
From \cite{Zhid}, we have the following global well-posedness result in the energy space $\mathcal{D}$.

\begin{theorem}[Zhidkow]\label{Zhidkow}
Let $\Psi_0 (x)\in \mathcal{D}\cap L^{\infty}(\mathbb{R})$.
There exists a unique global in time solution $\Psi (x,t)$ to the system \eqref{Ham} with initial data
$\Psi(x,0)=\Psi_0 (x).$ Moreover, the map $t\to \Psi(\cdot, t)$ is continuous
with respect to $\rho_A$ and  energy is preserved along the flow, that is $E(\Psi (x,t))=E(\Psi_0)$ for all $t.$
\end{theorem}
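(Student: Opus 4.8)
The plan is to obtain Theorem~\ref{Zhidkow} as an instance of Zhidkov's theory~\cite{Zhid} for Schr\"odinger equations with non-vanishing conditions at infinity, the only new ingredient being the verification that (W1)--(W4) make the nonlinearity in \eqref{Ham} admissible. I would first pass to a problem with trivial conditions at infinity. Write $N(\Psi)=\bigl(\partial_1F(|\psi_1|^2,|\psi_2|^2)\psi_1,\,\partial_2F(|\psi_1|^2,|\psi_2|^2)\psi_2\bigr)$ for the nonlinear term of \eqref{Ham}, and fix a smooth reference profile $\eta$ equal, outside $[-1,1]$, to the limits of $\Psi_0$ at $\pm\infty$ (each of which, by (W2)--(W4) and the finite-energy condition, is one of the wells $\mathbf a$, $\mathbf b$); then $\eta'$ and, since $N$ vanishes at the wells by (W2)--(W3), also $N(\eta)$ are smooth and compactly supported, and the finite-energy condition together with the non-degeneracy (W3) forces $\Phi_0:=\Psi_0-\eta\in H^1$. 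With this, \eqref{Ham} becomes $i\partial_t\Phi=-\partial_x^2\Phi+N(\eta+\Phi)-\partial_x^2\eta$ with $\Phi(0)=\Phi_0$.

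The soft part I would run in the standard way for one space dimension. Since $H^1(\RR)$ and $H^2(\RR)$ are Banach algebras continuously embedded in $L^\infty(\RR)$ and $F\in C^3$ by (W1), the map $\Phi\mapsto N(\eta+\Phi)$ is locally Lipschitz on each of them, while $N(\eta)-\partial_x^2\eta$ is a fixed element of both; a contraction mapping argument on the Duhamel formula gives a unique solution $\Phi\in C([-T,T];H^1)$ with $T=T(\|\Phi_0\|_{H^1})$, hence a maximal solution, and the same argument at the $H^2$ level shows that $H^2$ data give $H^2$ solutions. For such regular solutions $t\mapsto E(\Psi(t))$ is $C^1$, and substituting the equation and integrating by parts (the boundary terms vanish since $\partial_x\Psi\in L^2$) gives $\frac{d}{dt}E(\Psi(t))=\mathrm{Re}\sum_j\int_\RR i\,|\partial_t\psi_j|^2\,dx=0$; for $\Phi_0\in H^1$ one then approximates by $H^2$ data and passes to the limit using continuous dependence (from the contraction estimates) together with continuity of $E$ on $\eta+H^1$, which yields $E(\Psi(t))\equiv E(\Psi_0)$. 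The same $H^1$-continuity gives continuity of $t\mapsto\Psi(t)$ in $\rho_A$: each of the three terms of $\rho_A(\Psi(t),\Psi(t_0))$ is dominated by $\|\Phi(t)-\Phi(t_0)\|_{H^1}$ — directly for the derivative term, through $H^1(\RR)\hookrightarrow L^\infty(\RR)$ for the $L^\infty(-A,A)$ term, and because $\bigl|\,|\psi_j(t)|-|\psi_j(t_0)|\,\bigr|\le|\psi_j(t)-\psi_j(t_0)|$ pointwise for the middle term.

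The step I expect to be the main obstacle is global existence, that is, an a priori bound ruling out finite-time blow-up of $\|\Phi(t)\|_{H^1}$. Energy conservation together with $W\ge 0$ gives, uniformly in $t$, $\tfrac12\|\partial_x\Psi(t)\|_{L^2}^2\le E(\Psi_0)$ and $\int_\RR W(\Psi(t))\le E(\Psi_0)$, hence a uniform bound on $\|\partial_x\Phi(t)\|_{L^2}$; what remains is to control $\|\Phi(t)\|_{L^2}$. One computes $\frac{d}{dt}\|\Phi(t)\|_{L^2}^2=2\,\mathrm{Im}\int_\RR\overline\Phi\cdot\bigl(N(\eta+\Phi)-\partial_x^2\eta\bigr)$, and since $\partial_jF$ and $|\varphi_j|^2$ are real the contributions diagonal in $\Phi$ cancel, leaving $-2\,\mathrm{Im}\int\overline\Phi\cdot\partial_x^2\eta+2\sum_j\int\partial_jF(|\psi_1|^2,|\psi_2|^2)\,\mathrm{Im}(\overline{\varphi_j}\,\eta_j)$; the part of this supported where $\eta_j$ vanishes drops out, and on the complementary region the coefficient $\partial_jF$ is controlled through its vanishing at the well reached there together with the energy bound on $W(\Psi(t))$. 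Closing this into a Gronwall inequality with at most linear growth is delicate for a general $W$ obeying (W1)--(W4) — the naive bound leaves a super-quadratic term — and this is precisely where one invokes the estimates of Zhidkov's theory to produce the global-in-time $H^1$ bound; checking that (W1)--(W4) meet the admissibility requirements of that theory is the technical heart of the argument. Granting the global bound, the solution extends to all of $\RR$, and uniqueness, energy conservation, and continuity in $\rho_A$ then hold globally by the local theory.
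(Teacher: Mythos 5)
The paper does not prove this theorem; it simply cites Zhidkov's monograph \cite{Zhid} and records the statement as a known consequence of his Cauchy theory for Schr\"odinger equations in spaces of functions with nontrivial behavior at infinity. Your sketch is a plausible reconstruction of the ideas behind that citation, and the energy-conservation and $\rho_A$-continuity steps are along the right lines. But the reduction step has a genuine gap.

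You claim that, after fixing a smooth reference profile $\eta$ equal to the limiting wells outside $[-1,1]$, the finite-energy condition together with (W3) forces $\Phi_0:=\Psi_0-\eta\in H^1(\RR;\CC^2)$, and you then run the entire local theory (contraction in $H^1$, $H^2$ approximation, a priori $L^2$ bound on $\Phi$) on this decomposition. This is false in general. The energy bound controls $\Psi_0'\in L^2$ and, via the nondegeneracy \eqref{nondeg}, $\bigl||\psi_{0,j}|-|\eta_j|\bigr|\in L^2$ — that is, the \emph{moduli} converge to the wells in $L^2$. It does \emph{not} control $\psi_{0,j}-\eta_j$ in $L^2$, because the phase of $\psi_{0,j}$ need not converge at $\pm\infty$. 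For example, writing $\psi_{0,1}=r\,e^{i\theta}$ near $+\infty$ with $r\to a>0$ and $\theta(x)=\log\log x$, one has $r\theta'\in L^2$ (so $\psi_{0,1}'\in L^2$) and $r-a\in L^2$, yet $\psi_{0,1}$ has no limit and $\psi_{0,1}-ae^{i\beta}\notin L^2$ for any constant $\beta$. Thus your fixed profile $\eta$ cannot be chosen so that $\Phi_0\in L^2$, and the contraction argument in $H^1$ does not get off the ground in the generality claimed.

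This is precisely the difficulty Zhidkov's framework is built to avoid: one works directly in the spaces $X^k(\RR)=\{u\in L^\infty : u^{(j)}\in L^2,\ 1\le j\le k\}$, where only the derivatives are required to be square-integrable and no background profile is subtracted off. The local and global theory, as well as energy conservation, are carried out there (the linear group is not bounded on $X^k$ as it is on $H^1$, so even the Duhamel fixed point needs to be set up differently). If you want a self-contained argument rather than a citation, you should replace your $\eta+H^1$ decomposition by an argument in $X^1$; alternatively, restrict the statement to initial data of the form $\eta+H^1$, in which case your sketch is essentially correct, but that is a smaller class than $\mathcal{D}\cap L^\infty$ and would not support the use made of the theorem in the orbital stability proof.
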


We may now state our result on the orbital stability of the domain wall solutions.  Again, the result is the same for any Gross-Pitaevskii system, as long as the associated potential $W$ satisfies (W1)--(W4).

\begin{theorem}\label{orb-stab}  Assume (W1)--(W4), let $U$ be a minimizer of $E$ in $X,$ for which zero is a simple eigenvalue of $L_+$.
 Let $\Psi_0\in \mathcal{D}\cap L^{\infty}$ and let $\varepsilon>0 .$ There exist a positive number $\delta>0$ and real functions $\alpha(t), \theta_1(t),\theta_2(t)$ such that if
\begin{equation}\label{delta-nonlinear} \rho_A (\Psi_0, U)\leq \delta, \end{equation} then

 \begin{equation}\label{epsilon-nonlinear}
  \sup_{t\in \mathbb{R}} \rho_A \left(\Psi (\cdot,t),
  \begin{bmatrix} \exp [i \theta_1 (t)] u_1(\cdot+\alpha(t) \\
     \exp [i\theta_2 (t)] u_2(\cdot +\alpha(t)
     \end{bmatrix}
  \right)\leq \varepsilon.\end{equation}
\end{theorem}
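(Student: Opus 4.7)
The plan is to argue by contradiction, combining energy conservation under the Hamiltonian flow \eqref{Ham} with the variational compactness of minimizing sequences from Theorem~\ref{intro}, in the spirit of the orbital stability arguments in \cite{Gallo}. Assume the conclusion fails. Then there exist $\varepsilon_0>0$, initial data $\Psi_{0,n}\in\mathcal{D}\cap L^\infty$ with $\rho_A(\Psi_{0,n},U)\to 0$, and times $t_n\in\mathbb{R}$ for which the global solutions $\Psi_n(\cdot,t)$ of \eqref{Ham} with $\Psi_n(\cdot,0)=\Psi_{0,n}$ (which exist by Theorem~\ref{Zhidkow}) satisfy
\[
d_n(t_n):=\inf_{\alpha,\theta_1,\theta_2}\rho_A\!\left(\Psi_n(\cdot,t_n),(e^{i\theta_1}u_1(\cdot+\alpha),e^{i\theta_2}u_2(\cdot+\alpha))\right)\ge \varepsilon_0.
\]
By the continuity of $t\mapsto\Psi_n(\cdot,t)$ in $\rho_A$, after replacing $t_n$ by the first instant where $d_n(t)=\varepsilon_0$ (the negative-time case being handled by time-reversal symmetry of \eqref{Ham}), I may assume $d_n(t_n)=\varepsilon_0$ and $d_n(t)\le\varepsilon_0$ on $[0,t_n]$; the attaining triple $(\alpha(t),\theta_1(t),\theta_2(t))$ provides the modulation functions required by the statement.

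Next I would recognise $\Psi_n(\cdot,t_n)$ as a minimizing sequence for $E$ in $Y$. Energy conservation (Theorem~\ref{Zhidkow}) gives $E(\Psi_n(\cdot,t_n))=E(\Psi_{0,n})$, and the continuity of $E$ with respect to $\rho_A$---which follows from the quadratic vanishing of $W$ at $\mathbf{a},\mathbf{b}$ together with the $L^2$ control of $|\psi_j|-|u_j|$ built into $\rho_A$---forces $E(\Psi_n(\cdot,t_n))\to m$. Membership in $Y$ persists along the flow: finiteness of $E$ constrains $|\Psi_n(x,t)|$ to cluster near $\mathbf{a}$ or $\mathbf{b}$ at each end, with the side fixed continuously in $t$. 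Theorem~\ref{intro} then provides shifts $\tau_{n_k}$, a subsequence, and a minimizer $\tilde\Psi=(e^{i\beta_1}\tilde u_1,e^{i\beta_2}\tilde u_2)\in Y$ such that $\rho_A(\Psi_{n_k}(\cdot+\tau_{n_k},t_{n_k}),\tilde\Psi)\to 0$ for every $A>0$.

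The crucial step is to identify $\tilde U:=(\tilde u_1,\tilde u_2)$ with a translate of $U$; once done, undoing the shift $\tau_{n_k}$ places the limit of $\Psi_{n_k}(\cdot,t_{n_k})$ in the orbit of $U$, contradicting $d_n(t_n)=\varepsilon_0$ once $\varepsilon_0$ is small enough. For this I would invoke the assumed simplicity of the zero eigenvalue of $L_+$, whose kernel is spanned by $U'$, together with the structure of $\ker L_-=\mathrm{span}\{(u_1,0),(0,u_2)\}$ dictated by the two phase invariances. These identify $\ker D^2E(U)$ with the tangent space at $U$ of the three-parameter orbit $\mathcal{O}(U):=\{(e^{i\theta_1}u_1(\cdot+\alpha),e^{i\theta_2}u_2(\cdot+\alpha))\}$. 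A Lyapunov--Schmidt reduction or implicit-function argument applied to the Euler--Lagrange system at $U$ (in the spirit of Proposition~\ref{isolated}) then shows that every minimizer in a sufficiently small $\rho_A$-neighborhood of $\mathcal{O}(U)$ already lies on it.

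The main obstacle is precisely this final implication: converting the spectral simplicity of $L_+$, which is naturally an $H^1$-level statement, into geometric isolation of $\mathcal{O}(U)$ among minimizers in the $\rho_A$-metric. Because $\rho_A$ combines $L^2$ control of $\psi'_j$ and $|\psi_j|$ with only local $L^\infty$ control, care is required to set up the implicit function theorem on the correct Banach manifold and to rule out ``exotic'' nearby minimizers that might escape to infinity along the non-compact variable. The secondary technicalities (continuity of $E$ under $\rho_A$-convergence, and persistence of the $Y$-type boundary behaviour along the flow) are routine within the energy-space framework but must be documented carefully because of the nontrivial conditions at infinity.
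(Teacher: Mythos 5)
Your overall strategy (contradiction, energy conservation, Concentration--Compactness from Theorem~\ref{intro}, and isolation of minimizers via Proposition~\ref{isolated}) matches the paper's proof, but there are some genuine gaps in the execution.

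First, you dismiss as routine the assertion that $\Psi_n(\cdot,t_n)$ forms a minimizing sequence for $E$ in $Y$. The paper explicitly flags that $\Psi^n(\cdot,t_n)$ need not lie in $Y$, and constructs a modified sequence $\hat\psi_n$ that agrees with $\Psi^n(\cdot,t_n)$ on a large interval $[-R_n,R_n]$, is then interpolated out to the correct limits $\mathbf a$, $\mathbf b$ at $\pm\infty$, and still satisfies $E(\hat\psi_n)<E(\Psi^n(\cdot,t_n))+\tfrac1n$ with $\rho_A(\hat\psi_n,\Psi^n(\cdot,t_n))\to 0$. Without this surgery you cannot invoke Theorem~\ref{existence}(a).

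Second, and more seriously, your final contradiction goes in the wrong direction. You propose to show that the limit profile $\tilde U$ \emph{is} a translate of $U$ (via a Lyapunov--Schmidt argument in a neighbourhood of $\mathcal O(U)$) and then contradict $d_n(t_n)=\varepsilon_0$. To do this you would need $\tilde U$ to lie in a small $\rho_A$-neighbourhood of $\mathcal O(U)$, and that requires \emph{upper} semicontinuity of the distance-to-orbit map under the translated $\rho_A$-convergence supplied by Theorem~\ref{existence}(a). This is precisely where the $L^\infty(-A,A)$ component of $\rho_A$ misbehaves: translating the sequence by $\tau_{n_k}$ shifts the $L^\infty$ window, and if the $\tau_{n_k}$ are unbounded (which they can be, cf.\ the slow drift of Theorem~\ref{Slowmotion}) the control is lost. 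The paper argues in the opposite direction, which is robust: from the contradiction hypothesis \eqref{contr}, Fatou's lemma and Proposition~\ref{isolated}, one deduces $\inf_\tau\|(v_1,v_2)-U(\cdot+\tau)\|_{L^2}\ge\eta_0$, i.e.\ the limit is \emph{far} from the orbit of $U$. Lower semicontinuity of the $L^2$ part of $\rho_A$ suffices for this step and avoids the $L^\infty$ issue entirely. Having shown the limit is far, the paper then invokes continuity of the flow in $\rho_A$ (Theorem~\ref{Zhidkow}) and the intermediate value theorem to locate a time $\tilde t^n$ where the distance to the orbit lies in the ``gap'' $(l_1,l_2)$, and applies the energy-gap estimate of Corollary~\ref{Sch} to conclude $E(\Psi^n(\cdot,\tilde t^n))>m+\varepsilon_0$, contradicting energy conservation and $E(\Psi_0^n)\to m$. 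You never invoke Corollary~\ref{Sch}, yet it is the essential bridge from the spatial $L^2$-isolation of Proposition~\ref{isolated} to an energy lower bound that can be played against conservation.

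In short: keep the contradiction set-up and the use of Theorem~\ref{existence}(a) and Proposition~\ref{isolated}, but (i) insert the truncation step before applying the compactness theorem, (ii) reverse the logic of the isolation step so as to conclude the limit profile is far from the orbit rather than in it, and (iii) use the energy-gap Corollary~\ref{Sch} together with an intermediate-time argument to reach the contradiction with $E(\Psi_0^n)\to m$.
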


The proof of Theorem \ref{orb-stab} makes use of the variational structure of the equation and the Concentration-Compactness argument employed in proving Theorem~\ref{intro}.
In this way it recalls the classical work of Cazenave-Lions \cite{CLi} and Grillakis-Shatah-Strauss \cite{GST}. In our case, however, the control of the
phase is a very delicate matter and falls outside the Grillakis-Shatah-Strauss formalism.
In \cite{Bethuel}, orbital stability of the black soliton for the Gross-Pitaevskii  equation
was obtained facing similar problems as ours. The black soliton is a constrained minimizer
among functions with fixed `untwisted momentum'  equal to $\pi/2$ (c.f. \cite{Bethuel}) and
important part of the analysis goes into defining this notion rigorously. We do not deal
with such an issue, however, a key point needed in the analysis in \cite{Bethuel} is that
travelling waves with speed $c$ (including the case $c=0$ corrresponding to the black soliton)
are known explicitly and are unique. This complete characterization is not available to us.
Nevertheless, we are able to circumvent this by making use of the asymptotic behavior of
the domain wall solutions at $\pm \infty$ and the fact that heteroclinic minimizers are
isolated (as in Proposition \ref{isolated} below).

Note also that a stronger version of stability, namely asymptotic stability, is expected to hold
for the domain wall solutions and we hope to tackle this problem in a future project.

Even though variational techniques do not give  much information about $\theta_1(t), \theta_2(t)$
and $\alpha(t),$ a direct application of the same reasoning behind Theorem 1.3 in \cite{Bethuel}
to our setting allows us to obtain a weak form of a slow motion law for the center $\alpha(t)$ of the domain wall, at least for the family of solutions of the general Gross-Pitaevskii system \eqref{gen}.

\begin{theorem}\label{Slowmotion}
Let $U(x)\in X$ be an energy minimizing domain wall solution of \eqref{gen} with asymptotic conditions \eqref{genlim}.
Let $\alpha(t),\theta_1(t), \theta_2(t)$ be functions satisfying the conclusion of Theorem \ref{orb-stab}.
Then, there exists a constant $C=C(A)$ such that for all $t\in\mathbb{R}:$
$$
\vert\alpha(t)\vert\leq C\varepsilon\max\{1,\vert t\vert\},
$$
provided $\varepsilon$ is sufficiently small.
\end{theorem}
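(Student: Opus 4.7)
The strategy is to mimic the modulation-theoretic reasoning of Theorem 1.3 in \cite{Bethuel}, adapted to the two-component setting. The plan has three ingredients: a dynamic modulation decomposition fixing the parameters $\alpha(t), \theta_1(t), \theta_2(t)$ through orthogonality conditions on the remainder; an identity bounding $\dot\alpha$ linearly in that remainder; and integration in $t$.

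For the first step, I write
\[
\Psi(x,t) = \begin{pmatrix} e^{i\theta_1(t)}u_1(x+\alpha(t))\\ e^{i\theta_2(t)}u_2(x+\alpha(t))\end{pmatrix} + R(x,t),
\]
and impose three orthogonality conditions matched to the three-dimensional kernel of the linearization at $U$: the translation mode $U'$, spanning $\ker L_+$ simply under (W5), and the two phase modes $(iu_1,0)$ and $(0,iu_2)$, which span $\ker L_-$ since $(u_j,0)$ and $(0,u_j)$ solve the appropriate linearized equations. Because the phase modes are not in $L^2(\RR)$, these pairings must be formulated with a localized or exponentially-weighted inner product, as in \cite{Bethuel}. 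Invertibility of the associated $3\times 3$ Gram matrix at $R=0$, combined with the implicit function theorem, yields $C^1$ modulation parameters as long as $\Psi(\cdot,t)$ remains in a $\rho_A$-neighborhood of the orbit of $U$, with $|\alpha(0)|+|\theta_1(0)|+|\theta_2(0)|+\|R(\cdot,0)\|_\mathcal{X} \le C\delta$ in a natural energy norm $\mathcal{X}$ dominated by $\rho_A$.

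For the second step, substituting into \eqref{Ham} and differentiating the orthogonality conditions in $t$, the self-adjointness of $L_\pm$ eliminates the linear contribution of $D^2E(U)R$ when paired against kernel directions, leaving a linear system
\[
A(R)(\dot\alpha,\dot\theta_1,\dot\theta_2)^\top = B(R),
\]
with $A(0)$ invertible and $|B(R)|\le C\|R\|_\mathcal{X}$; thus $|\dot\alpha(t)|\le C\|R(\cdot,t)\|_\mathcal{X}$. Theorem~\ref{orb-stab} together with coercivity of $D^2E(U)$ on the orthogonal complement of its kernel, inherited from the spectral gap established in Section 3, yields the uniform bound $\|R(\cdot,t)\|_\mathcal{X} \le C\varepsilon$. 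Integration gives $|\alpha(t)-\alpha(0)|\le C\varepsilon|t|$, and combining with the initial estimate produces $|\alpha(t)| \le C\varepsilon\max\{1,|t|\}$.

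The principal obstacle is the gap between the topology in which orbital stability is established ($\rho_A$, which controls $L^\infty$ only on the fixed window $(-A,A)$) and the global integrals appearing in the modulation projections. Converting the $\rho_A$-smallness of $R$ into a global linear bound $|B(R)|\le C\|R\|_\mathcal{X}$ requires the exponential decay of $u_j'$ away from the transition region, together with the $L^2$-integrability of $\psi_j'-u_j'$ and $|\psi_j|-u_j$ built into $\rho_A$. This delicate trade-off, also central to \cite{Bethuel}, is what restricts us to the weak bound $|\alpha(t)| \le C\varepsilon\max\{1,|t|\}$ rather than a uniform-in-$t$ bound.
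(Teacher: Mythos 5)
Your proposal takes a genuinely different route from the paper. The paper's proof (modeled on Proposition 4.1 of \cite{Bethuel}, which is also the real content of Theorem 1.3 there) does not perform any modulation decomposition at all. Instead, it tracks a single scalar functional: a localized ``center of mass'' $G_{a,R}(\Psi)=\frac{1}{m(U)}\int g_R(x)\,x\,(\sqrt{g_{11}}|\psi_1|^2+\sqrt{g_{22}}|\psi_2|^2-\mu)\,dx$. Step 1 normalizes $U$ so that $G_{a,\infty}(U)=a$; Step 2 uses the exponential decay of $U$ and the $\rho_A$-smallness guaranteed by Theorem~\ref{orb-stab} (note $\rho_A$ controls $\||\psi_j|-u_j\|_{L^2}$, hence the integrand) to show $G_{0,R_0}(\Psi(\cdot,t))$ tracks $\alpha(t)$ up to $O(\varepsilon)$; Step 3 derives a momentum identity $\frac{d}{dt}G_{0,R}(\Psi)=2\int[\sqrt{g_{11}}\langle i\psi_1,\partial_x\psi_1\rangle+\sqrt{g_{22}}\langle i\psi_2,\partial_x\psi_2\rangle]\partial_x(xg_R)\,dx$ directly from the PDE and bounds this by $C\varepsilon$ using Cauchy--Schwarz and \eqref{epsilon-nonlinear}. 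Time integration and the induction on unit intervals then give the claim. No implicit function theorem, no differentiability of $\alpha(t)$, no coercivity of the second variation is used.

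Your modulation-theoretic route, as written, has a fundamental gap that the paper's method is specifically designed to avoid. You invoke ``coercivity of $D^2E(U)$ on the orthogonal complement of its kernel, inherited from the spectral gap established in Section~3'' to obtain $\|R(\cdot,t)\|_{\mathcal X}\le C\varepsilon$. But Theorem~\ref{secondvar}(iii) states $\sigma_{ess}(L_-)=[0,\infty)$: $L_-$ has \emph{no} spectral gap. Only $L_+$ does. Consequently $D^2E(U)$ does not control any reasonable energy norm of the imaginary part of $R$, and the bound $\|R\|_{\mathcal X}\le C\varepsilon$ cannot be deduced from conservation of energy and orbital stability. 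Relatedly, the ``phase modes'' $(iu_1,0)$ and $(0,iu_2)$ are not eigenfunctions of $L_-$ in $L^2$ — they are threshold resonances — so the kernel of the linearization on $L^2$ is one-dimensional, not three-dimensional, and the $3\times 3$ Gram matrix you propose to invert is not canonically defined. You acknowledge the need for a ``localized or exponentially-weighted inner product,'' but once you localize, the self-adjointness argument you use to kill the linear term $\langle L_\pm R,\cdot\rangle$ in the modulation ODEs fails, since $L_\pm$ are not self-adjoint with respect to the weighted pairing. Finally, $\rho_A$ does not control $\psi_j-u_j$ globally (only the moduli $|\psi_j|-u_j$ in $L^2$ and the difference in $L^\infty(-A,A)$), so the phases are genuinely uncontrolled at infinity; this is exactly why the paper tracks $\alpha(t)$ through $|\psi_j|^2$ alone and never decomposes the phase. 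Your closing observation that a ``delicate trade-off'' with $\rho_A$ limits one to the bound $C\varepsilon\max\{1,|t|\}$ is also not quite the right diagnosis: the linear-in-$t$ growth comes simply from the momentum drift of size $O(\varepsilon)$ per unit time, which is already visible in the center-of-mass identity.
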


Finally, in Section 6, we study the influence of a small localized potential 
on the domain walls. For simplicity we treat perturbations of the model system \eqref{hc}, but the same procedure may be adapted to the more general cases.  Consider

\be\label{GP-potential}
\left. \begin{aligned}
i\partial_t \psi_1 = -\partial_x^2 \psi_1 + \epsilon V(x) \psi_1 + (|\psi_1|^2 + \gamma |\psi_2|^2)\psi_1, \\
i\partial_t \psi_2 = -\partial_x^2 \psi_2 + \epsilon V(x) \psi_2 +(\gamma |\psi_1|^2 + |\psi_2|^2)\psi_2,
\end{aligned}\right\}
\ee
where $\epsilon > 0$ is a small parameter and $V : \RR \to \RR$ is a given potential.
Stationary solutions of the form $\psi_1=e^{-it}u_1$, $\psi_2=e^{-it}u_2$
with real-valued envelopes $u_1$, $u_2$ satisfy the system of differential equations
\begin{equation}\label{hc-potential}
\left.\begin{aligned}
- u''_1(x) + \left( \epsilon V(x) + u_1^2 + \gamma u_2^2 -1\right)u_1 &=0, \\
-u''_2(x) + \left( \epsilon V(x) + \gamma u_1^2 + u_2^2 -1\right)u_2 & =0.
\end{aligned}\right\} \quad \quad x\in\RR.
\end{equation}

For $\epsilon = 0$, existence of the domain wall solutions of the system (\ref{hc-potential})
with the boundary conditions (\ref{minus}) and (\ref{plus}) is given by Theorem \ref{existence}.
By using the method of Lyapunov--Schmidt reductions, similarly to the work of
Pelinovsky \& Kevrekidis \cite{PK} on black solitons for the Gross--Pitaevskii equation with a small localized potential,
we show persistence of the domain wall solutions for small values of $\epsilon$.

\begin{theorem}
\label{theorem-persistence}
Let $U_0 = (u_1,u_2)$ be a heteroclinic solution of the system (\ref{hc}) with $\gamma > 1$
in function space $X$ satisfying the symmetry reduction $u_2(x) = u_1(-x)$ for all $x \in \mathbb{R}$.
For a given $V \in C^2(\RR) \cap L^2(\mathbb{R})$, assume that there exists $x_0 \in \RR$
such that
\begin{equation}
\label{condition-1}
\int_{\mathbb{R}} V'(x + x_0) (u_1^2 + u_2^2 - 1) dx = 0,
\end{equation}
and
\begin{equation}
\label{condition-2}
\int_{\mathbb{R}} V''(x + x_0) (u_1^2 + u_2^2 -1) dx \neq 0.
\end{equation}
Then, there exists $\epsilon_0 > 0$ such that for all $\epsilon \in (-\epsilon_0,\epsilon_0)$,
the system of differential equations (\ref{hc-potential}) admits a unique branch of the heteroclinic solutions
$U = (u_1,u_2)$ in function space $X$. Moreover, $U$ is $C^{\infty}$ in $\epsilon$ and there is $C > 0$ such that
\begin{equation}
\label{smallnest}
\sup_{x \in \RR} \left| U(x) - U_0(x-x_0) \right| \leq C |\epsilon|, \quad \epsilon \in (-\epsilon_0,\epsilon_0).
\end{equation}
\end{theorem}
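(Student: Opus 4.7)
My plan is to carry out a Lyapunov--Schmidt reduction in the spirit of \cite{PK}. I parametrize any candidate heteroclinic as $U(x)=U_0(x-x_0-s)+\phi(x)$, where the shift $s\in\RR$ is to be determined and $\phi$ is a small $H^2(\RR;\RR^2)$ correction, imposed to be $L^2$-orthogonal to $\tilde U_0':=U_0'(\cdot-x_0-s)$. Write the stationary equation \eqref{hc-potential} as
$$
\mathcal{F}_\epsilon(U):=-U''+\tfrac12\nabla_U W(U)+\epsilon V(x)U=0.
$$
Translation invariance gives $\mathcal{F}_0(\tilde U_0)\equiv 0$, so expanding produces
$$
L_+\phi=-\epsilon V\tilde U_0-\epsilon V\phi-R_2(\phi),
$$
with $L_+=-\partial_x^2+\tfrac12 D^2W(\tilde U_0)$ the second-variation operator of Theorem~\ref{secondvar}, and $R_2(\phi)=O(\|\phi\|^2)$. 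By (W3), $\tfrac12 D^2W(\mathbf a)$ and $\tfrac12 D^2W(\mathbf b)$ are positive definite, so $L_+$ has essential spectrum bounded away from $0$ and is Fredholm of index $0$ on $H^2(\RR;\RR^2)$; the symmetric case \eqref{hc} satisfies (W5), so $\ker L_+=\mathrm{span}\{\tilde U_0'\}$.

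Let $P$ denote the $L^2$-projection onto $(\tilde U_0')^\perp$. The range component of the equation becomes the fixed-point problem
$$
\phi=-L_+^{-1}P\bigl[\epsilon V\tilde U_0+\epsilon V\phi+R_2(\phi)\bigr], \qquad \phi\in (\tilde U_0')^\perp\cap H^2,
$$
to which the implicit function theorem (or a direct contraction) applies, yielding a unique $C^\infty$ solution $\phi=\phi(s,\epsilon)$ for $(s,\epsilon)$ near $(0,0)$ with $\|\phi\|_{H^2}=O(\epsilon)$. The remaining unknowns are the zeros of the scalar bifurcation function
$$
G(s,\epsilon):=\bigl\langle \mathcal{F}_\epsilon(\tilde U_0+\phi(s,\epsilon)),\,\tilde U_0'\bigr\rangle_{L^2}.
$$

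Because $G(s,0)\equiv 0$, I write $G=\epsilon\, g$ with $g$ smooth; using $L_+\tilde U_0'=0$ and integration by parts,
$$
g(s,0)=\langle V\tilde U_0,\tilde U_0'\rangle=\tfrac12\int_\RR V(x)\frac{d}{dx}\bigl(|U_0(x-x_0-s)|^2-1\bigr)dx=-\tfrac12\int_\RR V'(y+x_0+s)\bigl(u_1^2(y)+u_2^2(y)-1\bigr)dy,
$$
where the boundary terms vanish by the exponential decay of $|U_0|^2-1$ (Theorem~\ref{existence}). Hypothesis \eqref{condition-1} gives $g(0,0)=0$, and \eqref{condition-2} gives $\partial_s g(0,0)=-\tfrac12\int V''(y+x_0)(u_1^2+u_2^2-1)dy\neq 0$, so a second application of the implicit function theorem yields a unique $C^\infty$ shift $s(\epsilon)$ with $s(0)=0$. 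Setting $U=U_0(\cdot-x_0-s(\epsilon))+\phi(s(\epsilon),\epsilon)$ delivers the desired solution; the bound \eqref{smallnest} follows from $|s(\epsilon)|=O(\epsilon)$, $\|\phi\|_{H^2}=O(\epsilon)$, and the Sobolev embedding $H^2(\RR)\hookrightarrow L^\infty(\RR)$, and local uniqueness in $X$ near $U_0(\cdot-x_0)$ is inherited from the IFT.

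The main obstacle, to my mind, is not the soft implicit-function machinery but setting up the Fredholm theory of $L_+$: showing that $L_+$ is Fredholm of index $0$ on $H^2(\RR;\RR^2)$ with simple kernel $\mathrm{span}\{\tilde U_0'\}$, and that $P$ and $L_+^{-1}$ depend smoothly on $s$. All of this rests on the exponential convergence of $U_0$ to $\mathbf a,\mathbf b$ (Theorem~\ref{existence}) together with the non-degeneracy (W3), (W5). Once these ingredients are secured, the two successive implicit-function applications and the matching of the leading-order bifurcation equation to \eqref{condition-1}--\eqref{condition-2} proceed cleanly.
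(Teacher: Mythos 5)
Your proposal is correct and follows essentially the same Lyapunov--Schmidt reduction as the paper: both decompose the putative solution as a shifted $U_0$ plus a correction orthogonal to $U_0'$, solve the range equation by the implicit function theorem using the simplicity of $\ker L_+$ from Theorem~\ref{secondvar}, and then identify \eqref{condition-1} and \eqref{condition-2} as the nondegeneracy conditions for the scalar bifurcation function after the same integration by parts $\langle V\tilde U_0,\tilde U_0'\rangle=-\tfrac12\int V'(|U_0|^2-1)$. The only cosmetic difference is that the paper writes $U(x+s)=U_0(x)+W(x)$, which moves the shift into the potential $V(x+s)$ and keeps $L_+$ fixed, whereas you shift the profile $U_0(\cdot-x_0-s)$ and therefore have an $s$-dependent $L_+$ and projector; you correctly flag that one must then verify smooth dependence of $P$ and $L_+^{-1}$ on $s$, a minor point handled automatically in the paper's convention.
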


\begin{remark}
{\rm In the particular case of even $V$, the first condition (\ref{condition-1}) is satisfied for $x_0 = 0$ because
$u_1^2 + u_2^2 - 1$ is even and $V'$ is odd. In this case, solutions of system (\ref{hc-potential}) for small
$\epsilon \in (-\epsilon_0,\epsilon_0)$ satisfies the symmetry reduction
$$
u_2(x) = u_1(-x) \quad \mbox{\rm for all} \quad x \in \mathbb{R},
$$
hence the bifurcation equation of the Lyapunov--Schmidt reduction (see equation (\ref{bifurcation-eq}) below)
is satisfied identically for $s = x_0 = 0$. As a result, the second condition (\ref{condition-2}) can be dropped
and it is sufficient to require $V \in C^1(\RR) \cap L^2(\RR)$ in the statement of Theorem \ref{theorem-persistence}.}
\end{remark}

Note that the effective potential, which produces the conditions (\ref{condition-1}) and (\ref{condition-2}) was
introduced in equation (48) of Ref. \cite{Dror} from physical arguments.

Once a unique branch of domain walls is shown to exist for small enough $\varepsilon,$
we turn to the stability conditions for the persistent domain wall solutions in the small
localized potential.

\begin{theorem}
\label{theorem-stability-potential}
Assume conditions of Theorem \ref{theorem-persistence} and that $V\in L^1(\mathbb{R})$.
The domain wall solutions of Theorem \ref{theorem-persistence} are spectrally stable
if $\sigma > 0$ and unstable if $\sigma < 0$, where
$$
\sigma := \frac{1}{2} \int_{\mathbb{R}} V''(x + x_0) (u_1^2 + u_2^2 -1) dx \neq 0.
$$
\end{theorem}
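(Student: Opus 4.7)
The plan is to apply Lyapunov--Schmidt perturbation theory to the linearized operators $L_\pm^{(\epsilon)}$ associated to the persistent wall $U^{(\epsilon)}$ of Theorem~\ref{theorem-persistence} and combine with the stability criterion of Theorem~\ref{theorem-stability}, in the spirit of \cite{PK}. The key object to track is the small eigenvalue of $L_+^{(\epsilon)}$ bifurcating from the simple zero eigenvalue of $L_+^{(0)}$, whose kernel is $\mathrm{span}(U_0'(\cdot-x_0))$ by (W5) and Theorem~\ref{secondvar}, with the rest of the spectrum bounded away from zero. Differentiating the perturbed profile equation (\ref{hc-potential}) in $x$ yields
$$
L_+^{(\epsilon)}\bigl(U^{(\epsilon)}\bigr)' \;=\; -\epsilon V'(x)\, U^{(\epsilon)},
$$
identifying $(U^{(\epsilon)})'$ as an approximate zero mode. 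Pairing with itself, using $U\cdot U' = \tfrac{1}{2}\partial_x(|U|^2-1)$ and integrating by parts (legitimate since $V\in C^2\cap L^1\cap L^2$ and $|U^{(\epsilon)}|^2-1$ decays exponentially by Theorem~\ref{existence}),
$$
\bigl\langle L_+^{(\epsilon)}(U^{(\epsilon)})',\,(U^{(\epsilon)})'\bigr\rangle \;=\; \tfrac{\epsilon}{2}\int_{\mathbb{R}} V''(x)\bigl(|U^{(\epsilon)}|^2-1\bigr)\,dx \;=\; \epsilon\sigma + O(\epsilon^2).
$$
Analytic perturbation theory then delivers a unique simple small eigenvalue $\lambda_1(\epsilon) = \epsilon\sigma/\|U_0'\|_{L^2}^2 + O(\epsilon^2)$ of $L_+^{(\epsilon)}$, of sign $\mathrm{sgn}(\epsilon\sigma)$.

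Next I would verify that $L_-^{(\epsilon)}\geq 0$ for small $\epsilon$. Since $V\in L^1\cap L^2$, the perturbation is relatively compact and $\sigma_{\mathrm{ess}}(L_-^{(\epsilon)}) = \sigma_{\mathrm{ess}}(L_-^{(0)})\subset[0,\infty)$ by Weyl's theorem. The $U(1)\times U(1)$ phase invariance is preserved by the potential $\epsilon V$, so $(u_1^{(\epsilon)},0)$ and $(0,u_2^{(\epsilon)})$ remain bounded solutions of $L_-^{(\epsilon)}\Psi=0$ for all small $\epsilon$; this, together with continuity of the resolvent, prevents any negative isolated eigenvalue from emerging. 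When $\sigma>0$, we therefore have $L_+^{(\epsilon)}>0$ strictly and $L_-^{(\epsilon)}\geq 0$, so the same spectral argument as in Theorem~\ref{theorem-stability}---which only uses the nonnegativity of $L_\pm$---yields no eigenvalues of (\ref{eigenvalue}) with $\mathrm{Re}\,\lambda\neq 0$, i.e., spectral stability.

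When $\sigma<0$, $L_+^{(\epsilon)}$ has a simple negative eigenvalue with eigenvector close to $(U^{(\epsilon)})'$, and I would construct a real positive eigenvalue of the full spectral problem by a Lyapunov--Schmidt reduction of (\ref{eigenvalue}) near $\lambda=0$. Writing $\Phi_R = (U^{(\epsilon)})'+\Phi_R^{\perp}$ and eliminating $\Phi_I = \lambda\,(L_-^{(\epsilon)})^{\dagger}\Phi_R$ via a pseudoinverse on the range, then projecting the reduced scalar equation onto $(U^{(\epsilon)})'$, yields
$$
c_-\lambda^2 + \epsilon\sigma \;=\; O\bigl(\epsilon^2 + |\lambda|^3\bigr), \qquad c_- := \bigl\langle (L_-^{(0)})^{\dagger} U_0',\,U_0'\bigr\rangle > 0,
$$
from which $\lambda^2 = -\epsilon\sigma/c_- > 0$ produces a real eigenvalue $\lambda = \sqrt{-\epsilon\sigma/c_-}+O(\epsilon)$ and hence spectral instability.

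The main obstacle is giving rigorous meaning to $c_-$: because the phase Goldstone modes $(u_1,0)$ and $(0,u_2)$ have nonzero limits at infinity and are not in $L^2$, the essential spectrum of $L_-^{(\epsilon)}$ reaches zero and its pseudoinverse is unbounded; indeed, the equation $L_-\eta = U_0'$ has no $L^2$ solution, since $\int (U_0')_1\,dx = 1\neq 0$ forces the naive solution to grow linearly at infinity. Carrying out the Lyapunov--Schmidt reduction rigorously therefore requires working in weighted (or polynomially-decaying) spaces adapted to the non-$L^2$ phase modes, analogously to the resolvent-at-threshold analysis in \cite{PK} for the perturbed black soliton; the exponential decay of $U_0'$ from Theorem~\ref{existence} is essential to ensure that $c_-$ is a well-defined finite positive constant.
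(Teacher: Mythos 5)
Your treatment of the small eigenvalue of $L_+^{(\epsilon)}$ is correct, and in fact somewhat cleaner than the paper's: differentiating the perturbed profile equation in $x$ to obtain $L_+^{(\epsilon)}(U^{(\epsilon)})' = -\epsilon V'(x)U^{(\epsilon)}$, pairing with $(U^{(\epsilon)})'$, and integrating by parts gives $\epsilon\sigma + O(\epsilon^2)$ directly, whereas the paper achieves the same answer by an explicit first-order expansion $U = U_0 + W$ with $W$ solving the linear inhomogeneous system~\eqref{perturbation-equations}, computing $L_+'(0)$, and then evaluating $\langle U', L_+'(0) U'\rangle$. Your treatment of $L_-^{(\epsilon)}$ also coincides with the paper's: the bounded positive null solutions $(u_1^{(\epsilon)},0)$ and $(0,u_2^{(\epsilon)})$ of $L_-^{(\epsilon)}$ preclude negative spectrum by Sturm theory. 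For $\sigma>0$ your conclusion via the generalized eigenvalue problem matches the paper exactly, with the simplification (which the paper also notes) that $L_+(\epsilon)^{-1}$ now exists on all of $L^2$ without a projection.

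The genuine gap is in the instability direction $\sigma<0$, and you have correctly diagnosed it but not repaired it. The Lyapunov--Schmidt reduction of the spectral problem near $\lambda=0$ requires inverting $L_-^{(\epsilon)}$ against $(U^{(\epsilon)})'$, and as you observe the equation $L_-\eta = U_0'$ has no solution in $L^2$ (a variation-of-parameters computation using $u_j$ as a known generalized zero mode gives $\eta_j(x)\sim -\tfrac{x}{2}u_j(x)$, which grows linearly on the side where $u_j\to 1$), so the constant $c_- = \langle (L_-^{(0)})^{\dagger}U_0',U_0'\rangle$ is not defined by a bounded pseudoinverse, precisely because $0$ is an endpoint of $\sigma_{\mathrm{ess}}(L_-)$ rather than an isolated eigenvalue. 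Your suggestion of passing to weighted spaces is a plausible direction but is left as a program rather than executed, and it is not how the paper closes the argument. The paper instead invokes Theorem~3.1 of De Bouard~\cite{Board}, a result engineered for exactly this ``bubble'' configuration: when $L_+^{-1}(\epsilon)$ has exactly one negative eigenvalue with the rest of its spectrum positive, $L_-(\epsilon)\ge 0$ with essential spectrum $[0,\infty)$, and the background is nontrivial at infinity, the generalized eigenvalue problem $L_-(\epsilon)\Phi_I = -\lambda^2 L_+^{-1}(\epsilon)\Phi_I$ admits exactly one negative value of $-\lambda^2$, hence one real unstable eigenvalue. Without either invoking such a black-box result or carrying out a genuine threshold-resolvent analysis for $L_-$, the instability conclusion is not established.
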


Note that if $u_1^2 + u_2^2 - 1 \leq 0$ for all $x \in \mathbb{R}$ and $V$ is slowly varying
on the scale of the domain wall $U = (u_1,u_2)$, then $\sigma > 0$ if $x_0$ is the point of maximum of
$V$ with $V''(x_0) < 0$. This corresponds to the prediction of Ref. \cite{Dror} based on
physical arguments that the stable pinning of the
domain walls happens at the potential maxima (rather than minima).

Let us give an example of the domain wall solution (\ref{exact}) for $\gamma = 3$ and
the explicit potential $V(x) = a \;{\rm sech}^2(bx)$ with $a \in \mathbb{R}$ and $b > 0$.
In this case, the condition (\ref{condition-1}) is satisfied for $x_0 = 0$
and the stability condition $\sigma > 0$ is satisfied if $a > 0$, that is, when $V$ is a single-humped
potential. The instability condition $\sigma < 0$ is satisfied if $a < 0$, that is, when $V$ is a
single-well potential. Although the actual value of $\sigma$ depends on $b$, the sign of $\sigma$ does not. 

The paper is organized as follows. Existence of domain wall solutions is proved in Section 2 as a consequence of a more general existence theorem based on variational methods characterizing heteroclinics as geodesics in a degenerate metric. Section 3 is devoted to the study
of the second variation of the energy functional $E$ at the domain wall solutions. Spectral stability
follows from the properties of the second variation and is established in Section 4. 
The proof of nonlinear stability of the domain wall solutions is developed in Section 5.
Finally, Section 6 gives results on persistence and stability of the domain wall solutions in
small localized potentials by Lyapunov-Schmidt reduction analysis.

\section{Existence of heteroclinics}

In this section, the construction of the domain walls is achieved by constructing minimizers of an energy functional defined on a weak space that imposes the desired conditions at infinity and satisfies certain symmetry conditions. Later, the weak convergence is improved by looking at the second variation, which in particular implies exponential decay at infinity of $\vert U\vert^2-1,$ as well as the rest of the properties in Theorem \ref{existence}. Exponential decay is needed later to show slow motion of the center of mass of perturbations of the domain walls and to analyze stability in the presence of a small potential.
We denote the energy density
$$  e(\Psi):= \frac12\left( |\nabla\psi_1|^2 + |\nabla\psi_2|^2 + W(\psi_1,\psi_2)\right).  $$

The following theorem includes the results stated in Theorem~\ref{intro}:

\begin{theorem}
\label{existence}
Assume $W$ satisfies (W1)--(W4).  Define
\be  \label{inf} m=\inf_{\Psi\in Y}  E(\Psi).
\ee
Then there exists $\Psi=(\psi_1,\psi_2)\in Y$ which attains the infimum of $E$ in $Y$, and solves the system
\be\label{gensys}
  -\Psi''(x) + \nabla W(\Psi)=0, \qquad \lim_{x\to\infty}\Psi(x)=\mathbf{a}, \quad
\lim_{x\to-\infty}\Psi(x)=\mathbf{b}
\ee
  Moreover,
\begin{enumerate}
\item[(a)]  For any minimizing sequence $\Psi_n\in Y$, $E(\Psi_n)\to m$, there exists a minimizer $\Psi\in Y$, a sequence $\tau_n\in\RR$, and a subsequence for which
$\rho_A(\Psi_{n_k}(\cdot+\tau_{n_k}),\Psi(\cdot))\to 0$ holds for all constants $A>0$.
\item[(b)] Every minimizer has the form $\psi_1=e^{i\beta_1}u_1$,
$\psi_2=e^{i\beta_2}u_2$, for real-valued $(u_1,u_2)\in X$ and $\beta_1,\beta_2\in \RR$ constants.
\item[(c)]  All minimizers satisfy $u_1(x),u_2(x)\ge 0$.
\item[(d)]  For $W$ which obey the symmetry $W(\psi_2,\psi_1)=W(\psi_1,\psi_2)$, there exists a minimizer $U$ which is symmetric, $u_2(x)=u_1(-x)$ for all $x\in\RR$.
\item[(e)]  All minimizers exhibit exponential convergence of $|U(x)-\mathbf{a}|$ as $x\to\infty$ and $|U(x)-\mathbf{b}|$ as $x\to -\infty$.
For the system \eqref{hc}, there exist constants $C_1,C_2, R$ such that
$u_1(x)\le C_1 e^{\sqrt{\gamma - 1} x}$ for $x<-R$ and $1-u_1(x)\le C_2 e^{-\sqrt{2}x}$ for $x>R$, and similarly for $u_2(x)$.
\end{enumerate}
\end{theorem}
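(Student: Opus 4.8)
The plan is to realize the level $m$ of \eqref{inf} as the geodesic distance between the wells $\mathbf a,\mathbf b$ in the degenerate Riemannian metric $\sqrt{W}\,|dU|$, and to obtain compactness of minimizing sequences by a concentration--compactness argument. First I would reduce to real, nonnegative, uniformly bounded profiles: by the diamagnetic inequality $|\nabla|\psi_j||\le|\nabla\psi_j|$ together with (W1), $E(\Psi)\ge E\big((|\psi_1|,|\psi_2|)\big)$, so $m=\inf_X E$ and along any minimizing sequence $\Psi_n\in Y$ one may pass to $U_n=(|\psi_{1,n}|,|\psi_{2,n}|)\in X$. Enlarging $R_0$ in (W4) so that $R_0>\max\{a,b\}$, the nearest--point projection onto $\overline{B_{R_0}(0)}\cap\RR^2_+$ is $1$-Lipschitz and, by (W4), does not increase $W$ pointwise; composing $U_n$ with it yields a minimizing sequence with $0\le u_{j,n}\le R_0$ and unchanged conditions at infinity. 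Using $\tfrac12(s^2+t^2)\ge st$ and then the definition of the metric,
\[
E(U)\ \ge\ \int_\RR \sqrt{W(U(x))}\,|U'(x)|\,dx\ \ge\ d_W(\mathbf b,\mathbf a),
\]
where $d_W$ is the distance induced by $\sqrt W\,|dU|$ (a genuine metric away from the two wells). Conversely, tracing a minimal (or near--minimal) geodesic from $\mathbf b$ to $\mathbf a$ and reparametrizing it so that the equipartition identity $|U'|^2=W(U)$ holds produces, using the nondegeneracy of the wells (W3), a profile in $X$ that reaches $\mathbf a$ and $\mathbf b$ only as $x\to\pm\infty$ and whose energy equals the length of the geodesic; hence $m=d_W(\mathbf b,\mathbf a)$, and every minimizer satisfies equipartition and traces a minimal geodesic.

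Next, for a minimizing sequence $U_n$ normalized as above, I would translate by $x_n$ chosen with $\mathrm{dist}(U_n(x_n),\mathbf a)=\mathrm{dist}(U_n(x_n),\mathbf b)$ --- such a point exists by continuity and the conditions at infinity --- so that $\widetilde U_n:=U_n(\cdot+x_n)$ has $\widetilde U_n(0)$ in a fixed compact set $K\subset\RR^2_+$ with $\overline K\cap\{\mathbf a,\mathbf b\}=\emptyset$. The bounds $\|\widetilde U_n\|_{L^\infty}\le R_0$ and $\|\widetilde U_n'\|_{L^2}\le C$ yield, along a subsequence, $\widetilde U_n\to U$ in $C_{loc}$ and weakly in $H^1_{loc}$; weak lower semicontinuity of the Dirichlet term and Fatou for the nonnegative potential term give $E(U)\le m$, while $\widetilde U_n(0)\to U(0)\in K$ forces $U$ to be nonconstant. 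Since $E(U)<\infty$ and $W>0$ off $\{\mathbf a,\mathbf b\}$, $U$ is uniformly continuous and $\mathrm{dist}(U(x),\{\mathbf a,\mathbf b\})\to0$ as $x\to\pm\infty$; as any full transition between the wells costs energy $\ge d_W(\mathbf b,\mathbf a)=m>0$ while $E(U)\le m$, the one--sided limits $U(\pm\infty)$ exist and belong to $\{\mathbf a,\mathbf b\}$.

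The heart of the matter --- and the step I expect to be the main obstacle --- is to prove that $U(-\infty)=\mathbf b$ and $U(+\infty)=\mathbf a$, i.e.\ that the transition is not lost to infinity. I would run the concentration--compactness dichotomy for the energy measures $e(\widetilde U_n)\,dx$ (whose total masses tend to $m$): \emph{vanishing} is excluded because near $x_n$ the profile stays in the compact set $K$ away from the wells and cannot oscillate fast (the Dirichlet energy controls its modulus of continuity), so any bounded window about $x_n$ carries a definite amount of potential energy; \emph{dichotomy} is excluded by strict subadditivity --- a splitting into two pieces receding to infinity would create a plateau near one well, forcing one of the pieces to contain a second full transition or an excursion back toward that well, which costs a fixed $\kappa>0$ of energy beyond $m$ and contradicts $E(\widetilde U_n)\to m$. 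Quantitatively, letting $\sigma_n$ and $\tau_n$ mark the onset of the final confinement of $\widetilde U_n$ to an $\eta$-ball about $\mathbf b$ as $x\to-\infty$ and about $\mathbf a$ as $x\to+\infty$, the metric lower bound on $[\sigma_n,\tau_n]$ together with the energy spent on $(-\infty,\sigma_n]$ and $[\tau_n,\infty)$ shows that any excursion away from a well after first entering its $\eta$-neighborhood forces $E(\widetilde U_n)\ge m+\kappa(\eta)$; taking $n$ large enough that $E(\widetilde U_n)<m+\kappa(\eta)$ rules this out, confines the transition to a bounded interval, and lets it pass to the $C_{loc}$-limit. Hence $U\in X$, so $E(U)\ge m$ and $U$ is a minimizer solving \eqref{gensys}. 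The same compactness shows the densities $e(\widetilde U_n)$ are tight with total mass $\to m=E(U)$, whence $\|\widetilde U_n'-U'\|_{L^2(\RR)}\to0$ and $\||\widetilde U_n|-|U|\|_{L^2(\RR)}\to0$; together with the $C_{loc}$-convergence this gives the $\rho_A$-convergence of part~(a).

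It remains to read off the structural properties. For any minimizer $\Psi$, equality in $E(\Psi)\ge E(|\Psi|)$ (forced by $E(|\Psi|)\ge m=E(\Psi)$) makes $U:=(|\psi_1|,|\psi_2|)$ a minimizer of $E$ over $X$; being a minimizer it solves \eqref{gensys} and is smooth, and if $u_j(x_0)=0$ then $x_0$ is an interior minimum with $u_j'(x_0)=0$, so uniqueness for the scalar equation $-u_j''+c(x)u_j=0$ (with $c$ continuous) forces $u_j\equiv0$, contradicting the conditions at infinity; hence $u_1,u_2>0$ on $\RR$, which is (c), and then the equality $|\psi_j'|^2=\big||\psi_j|'\big|^2+|\psi_j|^2|\theta_j'|^2$ with $|\psi_j|>0$ forces $\theta_j$ constant, i.e.\ $\psi_j=e^{i\beta_j}u_j$, which is (b). For (d), the symmetry $W(\psi_2,\psi_1)=W(\psi_1,\psi_2)$ forces $a=b$ by (W2), $\widehat U(x):=(u_2(-x),u_1(-x))$ is again a minimizer, and choosing $t_0$ with $u_1(t_0)=u_2(t_0)$ (intermediate value, since $u_1-u_2$ runs from $-b$ to $a$) and folding $U$ at $t_0$ against $\widehat U$ produces two symmetric competitors in $X$ whose energies are $2E(U;(-\infty,t_0])$ and $2E(U;[t_0,\infty))$, summing to $2m$; the cheaper one has energy $m$ and is the desired symmetric minimizer. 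Finally, for (e), near $\mathbf a$ write $U=\mathbf a+V$ with $V\to0$; then $V$ satisfies $-V''+MV=O(|V|^2)$ for large $x$ with $M$ the (positive definite, by (W3)) Hessian of the energy density at $\mathbf a$, and a barrier/Gronwall argument yields $|V(x)|\le Ce^{-\nu x}$, symmetrically at $-\infty$; for \eqref{hc} a direct linearization gives the rates $\sqrt 2$ (for $1-u_1$ as $x\to+\infty$ and $1-u_2$ as $x\to-\infty$) and $\sqrt{\gamma-1}$ (for $u_1$ as $x\to-\infty$ and $u_2$ as $x\to+\infty$).
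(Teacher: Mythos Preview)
Your strategy is essentially the paper's: both rest on the Modica--Mortola lower bound $e(\Psi)\ge\sqrt{W(\Psi)}\,|\Psi'|$, a centering of the minimizing sequence, and the observation that any extra transition or excursion near a well costs a fixed positive amount of energy. You frame the last point as concentration--compactness (ruling out vanishing and dichotomy), while the paper argues it directly via a quantitative lemma (its Lemma~2.3, from \cite{ABGui}): if a profile goes from near $\mathbf b$ to near $\mathbf a$ on an interval then the energy there is at least $m-C(\text{endpoint errors})^2$. Your explicit identification $m=d_W(\mathbf b,\mathbf a)$ is Sternberg's geodesic viewpoint; the paper cites it but only uses the one-sided inequality. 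Your treatment of (b)--(e) is the same in substance; for (e) the paper invokes the stable manifold theorem rather than a barrier, but either works.

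There is one genuine gap in your handling of~(a). You pass at the outset from $\Psi_n$ to $U_n=(|\psi_{1,n}|,|\psi_{2,n}|)$ and then project radially onto $\overline{B_{R_0}}$; all your compactness is proved for this modified real sequence. But $\rho_A$ measures $\|\psi'_{j,n}-\psi'_j\|_{L^2(\RR)}$ and $\|\psi_{j,n}-\psi_j\|_{L^\infty(-A,A)}$ for the \emph{complex} $\Psi_n$, so tightness and strong $L^2$-convergence for the truncated moduli do not by themselves give the conclusion. Two fixes are needed. First, drop the projection: it is unnecessary, since (W4) integrates to $W(\Psi)\ge \tfrac{c_0}{2}|\Psi|^2-C$, which already yields uniform $H^1([-R,R])$ bounds on the original $\Psi_n$; moreover, recovering $\|U_n-P(U_n)\|_{L^2}\to0$ from $E(U_n)-E(P(U_n))\to0$ is not immediate without an a~priori $L^\infty$ bound. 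Second, extract a weak $H^1_{loc}$ limit for the complex $\Psi_n$ itself (after translation), so that $\Psi'_n\rightharpoonup\Psi'$ in $L^2(\RR)$ and $\Psi_n\to\Psi$ in $C_{loc}$. Then your tightness argument gives $\int W(\Psi_n)=\int W(U_n)\to\int W(U)=\int W(\Psi)$, and combining this with $E(\Psi_n)\to m=E(\Psi)$ yields $\|\Psi'_n\|_{L^2}\to\|\Psi'\|_{L^2}$, hence strong $L^2$-convergence of the derivatives; this is exactly how the paper closes the argument.
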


We note that the potential in the model case \eqref{W} satisfies the symmetry condition in (d), and hence there is a symmetric minimizing domain wall solution for equation \eqref{hc}.

We begin by establishing some basic energy estimates.

\begin{lemma}\label{limits}  Assume $W$ satisfies (W1)--(W4), and
$\Psi\in H^1_{loc}(\RR;\CC^2)$ with $E(\Psi)<\infty$.  Then
$\displaystyle\lim_{x\to\pm\infty} W(\Psi(x))=0$.
\end{lemma}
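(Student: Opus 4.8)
The plan is to extract from the single hypothesis $E(\Psi)<\infty$ the two facts that $\psi_1',\psi_2'\in L^2(\RR)$ and that $\int_\RR W(\Psi)\,dx<\infty$ (recall $W\ge 0$ by (W2)), and to combine them. Since by (W1) the density $W(\Psi)=F(|\psi_1|^2,|\psi_2|^2)$ depends only on $U:=(|\psi_1|,|\psi_2|)$, and since $\bigl|\,|\psi_j|'\,\bigr|\le|\psi_j'|$ a.e., it suffices to prove the statement for a real $U\in H^1_{loc}(\RR;\RR^2)$ with $U(x)\in\RR^2_+$ and $\int_\RR(|U'|^2+W(U))\,dx<\infty$, i.e.\ to show $W(U(x))\to 0$ as $x\to\pm\infty$. (If $U$ is constant, then integrability of $W(U)$ forces $W(U)\equiv 0$ by (W2) and there is nothing to prove, so one may assume $0<\|U'\|_{L^2}<\infty$.)

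First I would establish the a priori bound $U\in L^\infty(\RR)$. Integrating the coercivity estimate (W4) along rays $t\mapsto t\omega$, $\omega\in\RR^2_+$, $|\omega|=1$, gives $W(U)\ge\tfrac{c_0}{2}\bigl(|U|^2-R_0^2\bigr)$ for every $U\in\RR^2_+$ with $|U|\ge R_0$, so Chebyshev's inequality yields $\bigl|\{x:|U(x)|\ge M\}\bigr|\le C_1/(M^2-R_0^2)$ for $M>R_0$, where $C_1:=\tfrac{2}{c_0}\int_\RR W(U)\,dx$. On the other hand, Cauchy--Schwarz gives $\bigl|\,|U(x)|-|U(x_0)|\,\bigr|\le|x-x_0|^{1/2}\|U'\|_{L^2}$, so whenever $|U(x_0)|=M$ one has $|U|\ge M/2$ on an interval of length of order $M^2/\|U'\|_{L^2}^2$ centered at $x_0$. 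Comparing this with the measure bound forces $M^2(M^2/4-R_0^2)\le C_2\|U'\|_{L^2}^2$, with $C_2$ depending only on $c_0$ and $\int_\RR W(U)\,dx$; since the left-hand side tends to $+\infty$ with $M$, this bounds $|U(x_0)|$ uniformly in $x_0$, and hence $R:=\|U\|_\infty<\infty$.

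Next, since $F\in C^3$ the map $W$ is Lipschitz on the compact set $\{|U|\le R\}$, and $U$ is uniformly $\tfrac12$-H\"older continuous on $\RR$ (again by Cauchy--Schwarz), so $x\mapsto W(U(x))$ is uniformly continuous on $\RR$. I would then invoke the elementary fact that a nonnegative, uniformly continuous, integrable function on $\RR$ must tend to $0$ at $\pm\infty$: if $W(U(x_n))\ge\eta>0$ along some $x_n\to+\infty$, uniform continuity would make $W(U)\ge\eta/2$ on intervals of a fixed positive length about each $x_n$, and after passing to a subsequence these intervals are disjoint, contradicting $\int_\RR W(U)\,dx<\infty$; the same argument applies as $x\to-\infty$. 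Since $W(\Psi)=W(U)$ by (W1), this gives $\lim_{x\to\pm\infty}W(\Psi(x))=0$.

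The only step that requires real work is the $L^\infty$ bound; the rest is routine. Hypothesis (W4) is precisely what is needed there, since it supplies the superquadratic growth of $W$ at infinity, without which $U$ --- and hence $W(U)$ --- could fail to be bounded even under $U'\in L^2$ and $\int_\RR W(U)\,dx<\infty$.
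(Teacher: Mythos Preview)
Your proof is correct and takes a genuinely different route from the paper's. The paper argues by contradiction via the Modica--Mortola inequality $e(\Psi)\ge\sqrt{W(\Psi)}\,|\Psi'|$: assuming $W(\Psi(x_n))\ge\eps_0$ along a sequence $x_n\to\infty$, it uses the nondegeneracy (W3), in the form of the two-sided estimate $C^{-1}\sqrt{W(\Psi)}\le|\mathrm{dist}|(\Psi,\mathbf a)\le C\sqrt{W(\Psi)}$ near $\mathbf a$, to show that the curve traced by $\Psi$ between each $x_n$ and a nearby point $t_n$ where $\Psi$ re-approaches a well contributes a fixed positive amount $\int_{\sigma_n}\sqrt{W}\,ds\ge\delta_0^2/2C$ to the energy, contradicting $E(\Psi)<\infty$. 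Your argument instead extracts an $L^\infty$ bound on $U=(|\psi_1|,|\psi_2|)$ directly from the growth condition (W4) combined with the $\tfrac12$-H\"older continuity coming from $U'\in L^2$, and then concludes by the Barbalat-type fact that a nonnegative, uniformly continuous, integrable function vanishes at infinity. Your approach is more elementary and actually more general for this particular lemma --- it needs only $W\ge 0$, continuity of $W$, and (W4), never the two-well structure of (W2) or the nondegeneracy (W3) --- whereas the paper's route has the advantage of introducing the geodesic estimate $\int e(\Psi)\,dx\ge\int_\sigma\sqrt{W}\,ds$ and the comparison \eqref{nondeg}, which are the central tools reused throughout the subsequent existence proof (Lemma~\ref{ABG} and Theorem~\ref{existence}).
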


\begin{proof}  For $\Psi=(\psi_1,\psi_2)\in\CC^2$, denote
$$  |\dist\!|(\Psi,\mathbf a):= \dist\left(\, (|\psi_1|,|\psi_2|)\, , \, \mathbf a\,\right)
.  $$
Since $\mathbf a, \mathbf b$ are nondegenerate global minimizers of $W$,
there exist constants $C, \delta >0$ such that for any $\Psi\in\CC^2$ with
$|\dist\!|(\Psi,\mathbf a)\le \delta$, we have
\be\label{nondeg}
C^{-1}\sqrt{W(\Psi)}\le |\dist\!|(\Psi, \mathbf a)\le C\sqrt{W(\Psi)},
\ee
and the same estimate holding for $\mathbf b$ replacing $\mathbf a$.

Suppose $W(\Psi(x))\not\to 0$ as $x\to \pm\infty$.
Then there exists $\eps_0>0$ and a sequence $x_n\to\infty$
(or $x_n\to -\infty$) for which $W(\Psi(x_n))\ge \eps_0$ for all $n$.
By \eqref{nondeg}, we may conclude that
$$ \min\bigl\{ |\dist\!|\left( \Psi(x_n), \mathbf a\right),
|\dist\!|\left( \Psi(x_n), \mathbf b\right)\bigr\}\ge C^{-1}\sqrt{\eps_0}:=\delta_0.
$$
On the other hand, since $\int_{-\infty}^\infty W(\Psi(x))\, dx<\infty$, there also must exist sequences along which $W(\Psi(x))\to 0$.  For each $n$, let $t_n$ be the smallest value of $t>x_n$ for which either
$$  |\dist\!|(\Psi(t), \mathbf a)=\frac12\delta_0, \quad\text{or}\quad
   |\dist\!|(\Psi(t), \mathbf b)=\frac12\delta_0, \quad \forall n\in\mathbb N. $$
One of the two must occur infinitely often; suppose a subsequence may be chosen with $|\dist\!|(\Psi(t_n), \mathbf a)=\frac12\delta_0$.  By extracting further subsequences if necessary, we may assume that the two sequences $\{x_n\}$ and $\{t_n\}$ interlace:
$$   x_n<t_n<x_{n+1}, \quad \forall n\in\mathbb N.  $$

Next we observe (as in \cite{Stern}),
\be\label{MM}  \int_y^z e(\Psi) \, dx \ge \int_y^z \sqrt{W(\Psi(x))}|\Psi'(x)|\, dx
     =\int_{\sigma} \sqrt{W}\, ds,
\ee
where $\sigma$ is the path in $\RR^2$ parametrized by $\Psi(x)$, $x\in (y,z)$, and $s$ is arclength.  If we denote by $\sigma_n$ the path traced by $\Psi(x)$ for $x\in (t_n, x_n)$, we observe that
 the (Euclidean) arclength of $\sigma_n$ is at least $\delta_0/2$.
Therefore, using \eqref{nondeg} and \eqref{MM}, we may conclude that
$$   \int_{t_n}^{x_n} e(\Psi)\, dx \ge \int_{\sigma_n} \sqrt{W}\, ds
  \ge \int_{\sigma_n} C^{-1}|\dist\!|(\Psi(x),\mathbf a)\, ds \ge {\delta_0^2\over 2C}, $$
which gives a constant contribution to the total energy for each $n\in\mathbb N$.  Since the intervals $[t_n,x_n]$ are mutually disjoint, we conclude that $E(\Psi)$ diverges, a contradiction.
\end{proof}

We note that the condition (W3) may be weakened, as long as the value of $W$ controls the distance to the minima, so as to replace the condition  \eqref{nondeg}.  For instance, this would still be the case if at each of the minima $\mathbf{a}, \mathbf{b}$, $W$ vanishes to finite order.

The following useful lemma comes from \cite{ABGui}.

\begin{lemma}\label{ABG}
Let $U(x)=(u(x),v(x))\in H^1_{loc}([L_1,L_2];\RR^2)$, with
$|U(L_1)-\mathbf b|< \delta$ and $|U(L_2)-\mathbf a|<\delta$, where $\delta>0$ is as in \eqref{nondeg}.
Then, there exists a constant $C_1>0$ such that
$$   \int_{[L_1,L_2]} e(U(x))\, dx \ge m
          - C_1\left[ |U(L_1)-\mathbf b|^2 + |U(L_2)-\mathbf a|^2\right].
$$
\end{lemma}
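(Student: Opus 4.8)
The plan is to use $U|_{[L_1,L_2]}$ as the core of an admissible competitor in $Y$ and pay only a quadratic price to complete it to a full heteroclinic profile. Fix a cap length, say $\ell=1$, and define $\tilde U$ on $\RR$ by $\tilde U(x)=U(x)$ for $x\in[L_1,L_2]$; on $[L_2,L_2+1]$ let $\tilde U$ be the straight-line interpolation from $U(L_2)$ to $\mathbf a$; on $[L_1-1,L_1]$ the straight-line interpolation from $\mathbf b$ to $U(L_1)$; and $\tilde U\equiv\mathbf a$ for $x>L_2+1$, $\tilde U\equiv\mathbf b$ for $x<L_1-1$. Since $U\in H^1_{loc}([L_1,L_2];\RR^2)$ is continuous (one-dimensional Sobolev embedding), the traces $U(L_1),U(L_2)$ make sense, the pieces match continuously, and $\tilde U\in H^1_{loc}(\RR;\RR^2)$. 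Being eventually equal to $\mathbf a$ at $+\infty$ and to $\mathbf b$ at $-\infty$, $\tilde U$ lies in $X\subset Y$, so $E(\tilde U)\ge m$. (If $\int_{L_1}^{L_2}e(U)\,dx=+\infty$ the asserted inequality is trivial, so we may assume this integral finite, whence $E(\tilde U)<\infty$.)

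Next I would estimate the energy of the two interpolating caps. On $[L_2,L_2+1]$ the derivative of $\tilde U$ is the constant vector $\mathbf a-U(L_2)$, so the gradient contribution there is exactly $\tfrac12|U(L_2)-\mathbf a|^2$. For the potential contribution, every point of this segment is at Euclidean distance at most $|U(L_2)-\mathbf a|<\delta$ from $\mathbf a$ (interpolation cannot increase the distance to an endpoint), and since $|\dist\!|(\Psi,\mathbf a)\le|\Psi-\mathbf a|$ for real $\Psi$ (because $a>0$ and $|s|\ge s$ forces $(|s|-a)^2\le(s-a)^2$), the whole segment lies in the region where \eqref{nondeg} applies; there $W(\tilde U)\le C^2|\dist\!|(\tilde U,\mathbf a)^2\le C^2|U(L_2)-\mathbf a|^2$. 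Hence $\tfrac12\int_{L_2}^{L_2+1}W(\tilde U)\,dx\le\tfrac{C^2}{2}|U(L_2)-\mathbf a|^2$, and the cap $[L_2,L_2+1]$ contributes at most $\tfrac{1+C^2}{2}|U(L_2)-\mathbf a|^2$ to $E(\tilde U)$. The identical bound with $\mathbf b$ in place of $\mathbf a$ holds for the cap $[L_1-1,L_1]$.

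Finally, combining the three pieces,
$$
m\le E(\tilde U)\le\int_{L_1}^{L_2}e(U)\,dx+\frac{1+C^2}{2}\bigl(|U(L_1)-\mathbf b|^2+|U(L_2)-\mathbf a|^2\bigr),
$$
and rearranging gives the claim with $C_1=\tfrac{1+C^2}{2}$.

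I do not expect a serious obstacle here: this is a gluing/comparison argument, and the only points needing care are (i) that the spliced competitor genuinely lies in $Y$, handled by making it eventually constant, and (ii) that the linear interpolation segments stay inside the neighbourhood where $W$ is quadratically controlled by \eqref{nondeg}, which follows from the two elementary facts noted above. The cap length $\ell=1$ is immaterial; any fixed $\ell>0$ works and only changes the value of $C_1$.
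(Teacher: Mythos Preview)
Your proof is correct and is precisely the standard gluing/comparison argument one expects here: extend $U$ to a competitor in $X\subset Y$ by linear interpolation to the wells over unit-length caps, invoke $E(\tilde U)\ge m$, and bound the cap energies using \eqref{nondeg}. The paper does not give its own proof but cites \cite{ABGui}, where the same construction is used; your verification that the linear segments remain in the region where \eqref{nondeg} applies (via $|\dist\!|(\Psi,\mathbf a)\le|\Psi-\mathbf a|$ for real $\Psi$) is the only point requiring care, and you handle it correctly.
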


We may now begin the proof of the existence theorem.

\begin{proof}[Proof of Theorem~\ref{existence}]  Let
$\Psi_n=(\psi_{1,n},\psi_{2,n})\in Y$ be a minimizing sequence, $E(\Psi_n)\to m$.  Let $\tau_n$ be the smallest value for which $|\psi_{1,n}(\tau_n)|=|\psi_{2,n}(\tau_n)|$. We define $\tilde\Psi_n(x):= \Psi_n(x+\tau_n)$, and note that with this definition $\tilde\Psi_n=(\tilde\psi_{1,n},\tilde\psi_{2,n})$ is a minimizing sequence for $E$ in $Y$ with normalization
\be\label{centered}|\tilde\psi_{1,n}(0)|=|\tilde\psi_{2,n}(0)|.  
\ee

By the boundedness of the energy, we may conclude that $\|\tilde\Psi'_n\|_{L^2(\RR)}$ is uniformly bounded.  Hypothesis (W4) may be integrated to obtain the estimate
$$  W(\Psi)\ge \frac12 c_0 |\Psi|^2 - c_1,  $$
for a constant $c_1$, which holds for all $\Psi\in\CC^2$.  As a consequence, for every fixed $R>0$, $\|\tilde\Psi_n\|_{H^1(-R,R)}$ is likewise uniformly bounded in $n$.
For each $R$, we may extract a subsequence $\tilde\Psi_{n_j}\to\Psi=(\psi_1,\psi_2)$ converging pointwise a.e. on $\RR$, uniformly on $[-R,R]$, and weakly in $H^1([-R,R])$.  Exhausting $\RR$ by a sequence of bounded intervals, and applying a diagonal argument, we obtain a further subsequence (which we continue to denote $\tilde\Psi_{n_j}$) for which
$\tilde\Psi'_{n_j} \weak \Psi'$ in $L^2(\RR)$ and $\tilde\Psi_{n_j}\to \Psi$ uniformly on every compact set, with $\Psi\in H^1_{loc}(\RR;\CC^2)$.  By weak lower semicontinuity of the norm and Fatou's lemma, we have
$$  E(\Psi) \le \liminf_{j\to\infty}  E(\tilde\Psi_{n_j}) = m.  $$

For the existence statement, it remains to show that $\Psi\in  Y$, and hence $\Psi$ is the desired minimizer.
 Denote by
$U_{n_j}(x)=(|\psi_{1,n_j}(x)|,|\psi_{2,n_j}(x)|)$ and
$U(x)=(|\psi_1(x)|,|\psi_2(x)|)\in X$.  We note that $E(U)\le E(\Psi)=m$, with $U_{n_j}\to U$ uniformly on compact sets and weakly in $H^1_{loc}$.

By Lemma~\ref{limits}, $\lim_{x\to\pm\infty} W(\Psi(x))=0$.
Assume, for a contradiction, that $|\Psi(x)|\to \mathbf b$ as $x\to\infty$.
For any $\eps>0$ (to be chosen later,) there exists $L_1>0$ with $\bigr|U(L_1)-\mathbf b\bigr|<\eps$.  Since $U_{n_j}\to U$ locally uniformly, there exists $j$ sufficiently large that
$|U_{n_j}(L_1)- \mathbf b|<2\eps$.  In addition, $U_{n_j}\in X$, and so there exists $L_2>L_1$ such that $|U_{n_j}(L_2)-\mathbf a|\le \eps$.  Applying Lemma~\ref{ABG},
\begin{align}\nnn
\int_{L_1}^{L_2} e(U_{n_j})\, dx &\ge m - C_1
\left[ |U_{n_j}(L_1)-\mathbf b|^2 + |U_{n_j}(L_2)-\mathbf a|^2\right]
     \\
   &\ge m- 5C_1\eps^2. \label{right}
\end{align}

On the other hand, for each $j$, $U_{n_j}\to \mathbf{b}$ as $x\to -\infty$, and $U_{n_j}$ have been normalized so that
$U_{n_j}(0)\in \Delta:=\{u\in\RR^2_+: \ u_1=u_2\}$.  Fix $\delta>0$ such that $\dist(\mathbf{a},\Delta), \dist(\mathbf{b},\Delta)>2\delta$, and
let $x_j<0$ be the largest negative value for which $|U_{n_j}(x_j)-\mathbf{b}|=\delta$.
By hypothesis (W2), there exists $w_0>0$ with
$\sqrt{W(U)}\ge w_0$ for all $U\in \RR^2_+$ with
$\dist(\mathbf{a},U), \dist(\mathbf{b},U)\ge 2\delta$.
Let $D=\dist(\Delta, B_\delta(\mathbf{b}))$.  Then for any $j$ we have:
$$ \int_{x_j}^0 e(U_{n_j}) \, dx
  \ge \int_{x_j}^0 \sqrt{W( U_{n_j}(x) )}\, |U'_{n_j}|\, dx
     = \int_{\{U_{n_j}(x): \ x_j\le x\le 0\}} \sqrt{W}\, ds
       \ge w_0 D.
$$
Together with the lower bound \eqref{right}, we then have for all sufficiently large $j$:
$$  \left[\int_{x_j}^{0} +\int_{L_1}^{L_2} \right] e(U_{n_j})\, dx
\ge m + w_0 D- 5C_1\eps^2.
$$
Taking $\eps>0$ small enough that $\eps^2<{w_0 D\over 10C_1}$ we
arrive at the contradiction
$E(\Psi_{n_j})\ge E(U_{n_j})\ge m+\frac12 w_0D$, for all sufficiently large $j$, which contradicts the definition of $\Psi_n$ as a minimizing sequence for $E$.
In conclusion, $U = (|\psi_1|,|\psi_2|)\to \mathbf a$ as $x\to +\infty$.  A similar argument shows that $U \to \mathbf b$ as $x\to -\infty$, and hence
$U\in X$ and gives the desired real-valued heteroclinic.  This completes the proof of existence of heteroclinic solutions to \eqref{gensys}.

\medskip

We now prove the properties (a)--(e) stated in Theorem~\ref{existence}.
To prove (b), let $\Psi\in Y$ be any minimizer, and $U(x)=(|\psi_1(x)|,|\psi_2(x)|)\in X$, which is 
also a minimizer of $E$.  Then, $E(U)\le E(\Psi)$
with equality if and only if $\psi_1=e^{i\beta_1}u_1$ and $\psi_2=e^{i\beta_2}u_2$,
with constant $\beta_1, \beta_2$.  Indeed, $W(\Psi)=W(|\psi_1|,|\psi_2|)$ holds for
any complex number $\Psi$.  The inequality $|(|\psi_j|)'(x)|\le |\psi'_j(x)|$ holds for almost
every $x$, with equality if and only if $\psi_j(x)=e^{i\beta_j}|\psi_j(x)|$ with
constant $\beta_j$.  Thus, all minimizing solutions must have the form specified in (b).

To prove (c), first note that by the argument of the preceding paragraph,
if $\tilde U(x)=(|u_1(x)|,|u_2(x)|)\in X$, then $E(\tilde U)\le E(U)$, with equality if and only if $u_j(x)=|u_j(x)|$, and hence all minimizers have nonnegative components.

We now consider property (d), the symmetry of minimizers, in the special case of symmetric $W$, $W(\psi_2,\psi_1)=W(\psi_1,\psi_2)$.
Let $U(x)=(|\psi_1(x)|,|\psi_2(x)|)\in X$, as above.  We note that by the choice of $\tau_n$ above, $U=(u_1,u_2)$ must satisfy $u_1(0)=u_2(0)$.
In the case
\begin{equation}\label{half}
\int_0^\infty e(U)\, dx \le \int_{-\infty}^0 e(U)\, dx,
\end{equation}
we define a new configuration $\hat U(x)$ by
$$  \hat U(x) = \begin{cases}
    ( u_1(x), u_2(x)), &\text{if $x\ge 0$,}\\
    (u_2(-x), u_1(-x)). &\text{if $x<0$.}
\end{cases}
$$
Then $\hat U\in X$, and $E(\hat U)\le E(U)=m$, so $\hat U$ is also a minimizer of $E$ in $X\subset Y$, with the desired symmetry.
In case the opposite inequality holds in \eqref{half}, we keep the values of $u_1,u_2$ for $x<0$, and perform the reflection to $x>0$ to reduce the energy.  In either case, we obtain the existence of a minimizer with symmetry as given by (d).

To prove (e) on the exponential decay of the solution as $x\to\pm\infty$, we recall the Stable and Unstable Manifold Theorem
for differential equations (see, e.g., \cite{Stable}). Both ${\bf a}$ and ${\bf b}$ are equilibrium states
of the system of differential equations \eqref{hc}, which correspond to the non-degenerate minima of $W$.
Consequently, they define saddle points of the dynamical system defined by the system of ODEs, and the linearization at the saddle points
possesses two pairs of (non-vanishing) real eigenvalues.
By the Unstable Manifold Theorem, the nonlinear dynamical system \eqref{hc} has a two-dimensional unstable manifold $W_u({\bf a})$
that is tangent to the manifold $E_u({\bf a})$ at $(u_1,u_2) = {\bf a}$.
Since the minimizer must belong to
the unstable manifold because of the boundary condition (\ref{minus}), we conclude that
the solution decays exponentially to ${\bf a}$.

For the specific equation \eqref{hc}, the linearized dynamical system has the two-dimensional unstable manifold at the point ${\bf a}$ in the explicit form
\begin{equation}
\label{manifold}
E_u({\bf a}) := \left\{ u_1' = \sqrt{\gamma -1} u_1, \quad u_2' = \sqrt{2}(u_2 - 1), \quad (u_1,u_2) \in \mathbb{R}^2 \right\}.
\end{equation}
Thanks to (c), the minimizer satisfies $u_1 > 0$ and $u_2 < 1$ in
the parametrization of $E_u({\bf a})$ in (\ref{manifold}). Thanks to (d), the result also extends to
the other infinity, where the minimizer decays exponentially to ${\bf b}$.

Finally, we turn to the convergence (a) of complex-valued minimizing sequences in the distance functions $\rho_A$.  Let $\Psi_n=(\psi_{n,1},\psi_{n,2})$ be a minimizing sequence for $E$ in $Y$.  By the first step in the existence proof, we may find translations $\tau_n$ such that $\tilde\Psi_n(x)=\Psi_n(x+\tau_n)$ is normalized with center at the origin, \eqref{centered}.  For simplicity, we assume that the original minimizing sequence $\Psi_n$ satisfies \eqref{centered}.  Denote by
$U_n=(|\psi_{n,1}|,|\psi_{n,2}|)\in X$, which we have already noted is also a minimizing sequence, with $m\le E(U_n)\le E(\Psi_n)\to m$.  
By previous arguments, there exists a minimizer $U\in X$, $E(U)=m$, and a subsequence (which we will continue to denote $U_n$) for which $U_n\to U$ pointwise a.e. on $\RR$, uniformly on compact intervals, and weakly in $H^1_{loc}$.

\medskip

\noindent {\it Step 1:} $U_n\to U$ in $L^\infty(\RR)$.

\smallskip

Suppose not:  then there exists $\eps_0>0$ and a sequence of points $x_n\to\infty$ (or $x_n\to -\infty$) such that
$$   |U_n(x_n)-U(x_n)|\ge \eps_0  $$
for all $n$.  Furthermore, $U(x_n)\to \mathbf a$ as $n\to\infty$, so there exists $N_1\in\NN$ so that
$|U(x_n)- \mathbf a|\le {\eps_0\over 10}$
for all $n\ge N_1$.  Hence,
$$  |U_n(x_n)-\mathbf a| \ge {9\eps_0\over 10} $$
holds for all $n\ge N_1$.

On the other hand, each $U_n(y)\to \mathbf a$ as $y\to\infty$, so we may choose $y_n$ to be the smallest $y>x_n$ for which
$|U(y_n)-\mathbf a|={\eps_0\over 10}$.  We thus have
\begin{equation}\label{xy}
 |U_n (y_n)-U_n(x_n)| \ge {4\eps_0\over 5}
\end{equation}
for all $n\ge N_1$.  By the estimate \eqref{nondeg},
$$  \sqrt{W(U_n(x))} \ge C^{-1} |U_n(x)-\mathbf a| \ge C^{-1}{\eps_0\over 10}, $$
for $x_n\le x\le y_n$ and $n\ge N_1$.  Applying \eqref{MM}, we have
$$
\int_{x_n}^{y_n} e(U_n)\, dx \ge \int_{\sigma_n}\sqrt{W}\, ds
   \ge C^{-1}{\eps_0\over 10}{4\eps_0\over 5}:= \eps_1,
$$
where $\sigma_n=\{U_n(x): \ x_n\le x\le y_n\}$ is a path in $\RR^2$ with (Euclidean) arclength at least ${4\eps_0\over 5}$ (by \eqref{xy}.)

Now choose $R>0$ such that $\int_{-R}^R e(U)\, dx \ge m -{\eps_1\over 10}.$  By weak lower semicontinuity, there exists $N_2\ge N_1$ such that for all $n\ge N_2$,
\begin{equation}\label{N2}   \int_{-R}^R e(U_n)\, dx
       \ge \int_{-R}^R e(U)\, dx - {\eps_1\over 10}\ge m-{\eps_1\over 5}.
\end{equation}
Therefore, for $n\ge N_2$ we have
$$  E(U_n) = \int_\RR e(U_n)\, dx \ge
\left[\int_{-R}^R + \int_{x_n}^{y_n}\right] e(U_n)\, dx
\ge m + {4\eps_1\over 5},
$$
which contradicts the fact that $U_n$ is a minimizing sequence for $E$.  Thus, Step 1 is verified.

\medskip

\noindent {\it Step 2:}  $\int_\RR W(U_n)\, dx \to \int_\RR W(U)\, dx$.

\medskip

Suppose not.  By Fatou's lemma we have $\int_\RR W(U)\le\liminf \int_\RR W(U_n)$, and so we may assume that there exists $\eps_1>0$ and a subsequence (still labelled $U_n$)
for which
$$  \int_\RR W(U_n) - \int_\RR W(U) \ge \eps_1  $$
for all $n$.  Let $R$ be chosen so that
$$
\int_{-R}^R e(U)\, dx \ge m-{\eps_1\over 10}.
$$
By uniform convergence and the arguments of Step 1, there exists $N_2\in\NN$ for which both \eqref{N2} holds and
$$
       \int_{-R}^R W(U_n)\, dx \le
       \int_{-R}^R W(U)\, dx + {\eps_1\over 5}\le \int_\RR W(U)\, dx +{\eps_1\over 5},
$$
for all $n\ge N_2$.
By the definition of $\eps_1$, it follows that either
$$  \int_R^\infty W(U_n)\, dx \ge {2\eps_1\over 5}, \quad\text{or}\quad
   \int_{-\infty}^{-R} W(U_n)\, dx \ge {2\eps_1\over 5}.
$$
Assume it is the former which holds.
But then we have the contradiction,
$$  E(U_n)=\int_\RR e(U_n)\, dx \ge
     \int_{-R}^R e(U_n)\, dx + \int_R^{\infty} W(U_n)\, dx
         \ge m + {\eps_1\over 5},
$$
for all $n\ge N_2$.
Thus Step 2 must hold.

\medskip

As a corollary to Step 2 we have:

\noindent {\it Step 3:}  $\| \Psi'_n\|_{L^2(\RR)}^2 \to \| \Psi'\|_{L^2(\RR)}^2$.

\smallskip

Indeed, as $\int_\RR W(\Psi_n) = \int_\RR W(U_n) \to \int_\RR W(U)= \int_\RR W(\Psi)$, and $E(\Psi_n)\to E(\Psi)$, it follows that
$$  \int_\RR  |\Psi'_n|^2 \, dx \to  \int_\RR |\Psi'|^2\, dx.  $$
By a familiar argument we may conclude that
$$\|\Psi'_n - \Psi'\|_{L^2(\RR)} \to 0.$$

\noindent {\it Step 4:} $U_n\to U$ in $L^2(\RR)$.

\smallskip

We first claim that for any $\eps>0$, there exists $R>0$ so that
\begin{equation}\label{uniform}
\int_{\{|x|\ge R_n\}}  W(U_n)\, dx < \eps
\end{equation}
for every $n\in\NN$.
Indeed, if we assume the contrary, then there exists a subsequence of $n\to\infty$, $\eps_0>0$ and $R_n\to\infty$ such that
$$ \int_{\{|x|\ge R\}} W(U_n)\, dx \ge \eps_0.  $$
Fix $R_0>0$ with the property that
$$  \int_{-R_0}^{R_0} W(U)\, dx \ge \int_\RR W(U)\, dx
      -{\eps_0\over 4}.  $$
By the uniform convergence $U_n\to U$ on compact intervals, there exists $N_2\in\NN$ so that when $n\ge N_2$,
$$  \int_{-R_0}^{R_0} W(U_n)\, dx \ge \int_\RR W(U)\, dx
      -{\eps_0\over 2}.  $$
It follows that
$$  \int_\RR W(U_n)\, dx \ge \left[
        \int_{-R_0}^{R_0} + \int_{\{|x|\ge R_n\}}\right] W(U_n)\, dx
           \ge \int_\RR W(U)\, dx + {\eps_0\over 2}.
$$
However, this contradicts Step 2, and thus the claim must be true.

To prove Step 4, we let $\eps>0$ be arbitrary, and recall the definition of $\delta$ from \eqref{nondeg}.
We choose $R>0$ to satisfy the following three conditions:  \eqref{uniform}, $\int_{\{|x|\ge R\}} W(U)\, dx < \eps$, and that both of
$|U_n(x)-\mathbf a|,|U(x)-\mathbf a|<\delta$ for all $x\ge R$ and for all $n$.  The first condition follows from the claim, the second from the finiteness of the integral $\int_\RR W(U)$, and the third from Step 1.
Applying \eqref{nondeg}, for all $x\in\RR$ we have:
\begin{align*}   |U_n(x)-U(x)|^2 \le
   \left(|U_n(x)-\mathbf a| + |U(x)-\mathbf a|\right)^2
     &\le C^2\left( \sqrt{W(U_n)} + \sqrt{W(U)}\right)^2 \\
      &\le 2C^2\left( W(U_n)+W(U)\right).
\end{align*}
Therefore we have
$$  \int_R^\infty |U_n(x)-U(x)|^2\, dx \le 2C^2
         \int_R^\infty \left( W(U_n)+W(U)\right) \le 4C^2\eps.
$$
A similar calculation produces the same estimate over the interval $(-\infty,R)$, and the convergence in $L^2([-R,R])$ follows from uniform convergence on compact sets.  Thus Step 4 is proven.
\medskip

Putting together the uniform convergence on compact sets $[-A,A]$ and Steps 3 and 4, we obtain the conclusion (e), $\rho_A(\Psi_n,\Psi)\to 0$ for any fixed $A>0$.
\end{proof}

The real-valued energy minimizing domain walls $U(x)$ solve the Euler-Lagrange equations,
\begin{equation}\label{EL}
-U''(x) + \nabla W(U)=0.
\end{equation}
Under the hypothesis (W4), all solutions (and not just energy minimizers) of this system are in fact {\it a priori} bounded in supremum norm:

\begin{theorem}\label{apriori}
There exists a constant $K>0$, depending only on $W$, such that for any domain wall solution $U\in X$ of \eqref{EL},
\be\label{apest}  \| U \|_{L^\infty(\RR)} \le K.  \ee
For the system \eqref{gen}, the following pointwise estimate holds:
$$   {u_1(x)^2\over a^2}+{u_2(x)^2\over b^2}\le 1.  $$
\end{theorem}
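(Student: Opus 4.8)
The plan is to prove both assertions by a maximum principle applied to a scalar quantity built from $U$; since $U\in X$ solves \eqref{EL} and $W\in C^3$, standard ODE regularity gives $U\in C^2(\RR)$, so the functions introduced below are twice differentiable and the limits at $\pm\infty$ in \eqref{gensys}--\eqref{genlim} are genuine.

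For the uniform bound \eqref{apest}, I would take $\phi(x):=|U(x)|^2=u_1(x)^2+u_2(x)^2$. Using \eqref{EL},
\[
 \phi'' = 2\,U\cdot U'' + 2|U'|^2 = 2\,\nabla W(U)\cdot U + 2|U'|^2 \ \ge\ 2\,\nabla W(U)\cdot U .
\]
The point is that, by (W1), $W(U)=F(u_1^2,u_2^2)$, so a computation gives $\nabla W(U)\cdot U = 2u_1^2\,\partial_1 F(u_1^2,u_2^2)+2u_2^2\,\partial_2 F(u_1^2,u_2^2)$, which depends only on $(|u_1|,|u_2|)$; hence the inequality in (W4) — stated for $\RR^2_+$ — in fact holds for every $U\in\RR^2$, giving $\nabla W(U)\cdot U\ge c_0|U|^2$ whenever $|U|\ge R_0$. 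Since $U\in X$, $\phi(x)\to|\mathbf a|^2$ as $x\to+\infty$ and $\phi(x)\to|\mathbf b|^2$ as $x\to-\infty$, so $\phi$ is bounded on $\RR$; if $\sup_\RR\phi>\max\{|\mathbf a|^2,|\mathbf b|^2\}$ it must be attained at some interior point $x_0$, where $\phi'(x_0)=0$ and $\phi''(x_0)\le0$. The displayed inequality then forces $|U(x_0)|<R_0$, so in all cases $\sup_\RR\phi\le\max\{|\mathbf a|^2,|\mathbf b|^2,R_0^2\}$, which is \eqref{apest} with $K:=\max\{|\mathbf a|,|\mathbf b|,R_0\}$, a constant determined by $W$ alone.

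For the pointwise estimate for the system \eqref{gen}, I would run the same scheme on the normalized density $\psi(x):=u_1(x)^2/a^2+u_2(x)^2/b^2$. Substituting $u_1'',u_2''$ from \eqref{gen} and discarding the nonnegative terms $(u_1')^2/a^2+(u_2')^2/b^2$, a direct computation with $p:=u_1^2/a^2\ge0$ and $q:=u_2^2/b^2\ge0$ yields
\[
 \tfrac12\psi'' \ \ge\ g_{11}a^2\,p(p-1) + g_{22}b^2\,q(q-1) + g_{12}(a^2+b^2)\,pq .
\]
The identity $p(p-1)=p(p+q-1)-pq$ (and its analogue for $q$) rewrites the right-hand side as
\[
 (p+q-1)\bigl(g_{11}a^2\,p + g_{22}b^2\,q\bigr) \ +\ pq\,\bigl(g_{12}(a^2+b^2)-g_{11}a^2-g_{22}b^2\bigr).
\]
Because $a^2=\mu/\sqrt{g_{11}}$ and $b^2=\mu/\sqrt{g_{22}}$, the standing hypothesis $g_{12}>\sqrt{g_{11}g_{22}}$ gives $g_{12}/\sqrt{g_{11}}>\sqrt{g_{22}}$ and $g_{12}/\sqrt{g_{22}}>\sqrt{g_{11}}$, so the bracket in the second term is strictly positive, while the first term is nonnegative on $\{p+q>1\}$. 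Hence at any interior point $x_0$ with $\psi(x_0)=p+q>1$ one gets $\psi''(x_0)>0$. Since $\psi(x)\to1$ as $x\to\pm\infty$ by \eqref{genlim}, any value of $\psi$ exceeding $1$ would be attained at an interior maximum, contradicting $\psi''(x_0)\le0$; therefore $\psi\le1$ on $\RR$, which is the asserted estimate (and in particular re-proves \eqref{apest} for \eqref{gen} with $K=\max\{a,b\}$).

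The routine ingredients — regularity of solutions, and the fact that a supremum exceeding the two endpoint limits is realized at an interior maximum — I would state in a line. The step I expect to be the crux is the algebraic positivity in the \eqref{gen} case: the quadratic $g_{11}a^2p(p-1)+g_{22}b^2q(q-1)+g_{12}(a^2+b^2)pq$ is not obviously nonnegative on $\{p+q>1,\ p,q\ge0\}$, and it is precisely the rearrangement via $p(p-1)=p(p+q-1)-pq$ together with the sharp (not crude) use of $g_{12}>\sqrt{g_{11}g_{22}}$ that makes the sign come out right; this rearrangement also disposes of the possibly degenerate configurations $p=0$ or $q=0$ with no extra work.
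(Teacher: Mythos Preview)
Your argument is correct. Both parts proceed via the maximum principle applied to the same auxiliary quantities the paper uses (up to an affine rescaling: the paper's $\varphi=\sqrt{g_{11}}u_1^2+\sqrt{g_{22}}u_2^2-\mu$ equals $\mu(\psi-1)$), but the execution differs. The paper derives a \emph{linear} differential inequality, e.g.\ $-\tfrac12\varphi''+(g_{11}u_1^2+g_{22}u_2^2)\varphi\le 0$, and then multiplies by the positive part $\varphi_+$ and integrates over $\RR$ to force $\varphi_+\equiv 0$; you instead argue pointwise, locating an interior maximum and using $\psi''\le 0$ there. Your route is a bit more elementary (no integration, no cutoff), while the paper's linear inequality avoids the algebraic rearrangement $p(p-1)=p(p+q-1)-pq$ that you needed to make the sign come out. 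The constant you obtain, $K=\max\{|\mathbf a|,|\mathbf b|,R_0\}$, is also sharper than the paper's $K>\max\{c_1/c_0,a^2,b^2\}$, since you never pass through the global bound $\nabla W(U)\cdot U\ge c_0|U|^2-c_1$.
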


Of course, by part (b) of Theorem~\ref{existence}, the same statement may be made for any complex-valued solution $\Psi\in Y$.

\begin{proof}
From hypothesis (W4), we may easily obtain the global bound,
\be\label{gb}
\nabla W(U)\cdot U \ge c_0|U|^2 - c_1,
\ee
valid for all $U\in\RR^2$.  Define $\varphi(x):=u_1(x)^2 + u_2(x)^2-K$, for constant $K> \max\{(c_1/c_0),a^2,b^2\}$.  We calculate
\begin{align*}
 \frac12\varphi''(x) &= [u'_1]^2 + [u'_2]^2 + \nabla W(U)\cdot U \\
   &\ge c_0\left(|U|^2 - {c_1\over c_0}\right)  \\
   &\ge c_0 \varphi.
\end{align*}
Since (by the choice of $K$,) $\lim_{|x|\to\infty} \varphi(x)<0$, the positive part $\varphi_+(x)=\max\{\varphi(x),0\}$ has compact support in $\RR$.
Multiplying the equation for $\varphi$ by $\varphi_+$ and integrating, we have:
$$  \int_\RR \left[ \frac12 (\varphi'_+)^2  + c_0 \varphi_+^2\right] dx =0,
$$
and hence $\varphi(x)\le 0$ on $\RR$.  This proves \eqref{apest}.

To prove the more precise bound for solutions of \eqref{gen}, let
 $\varphi(x):= \sqrt{g_{11}}u_1^2(x)+\sqrt{g_{22}}u_2^2(x) -\mu$.  Then, $\varphi$ satisfies the equation
\begin{align*}
  -\frac12 \varphi'' + (g_{11}u_1^2+g_{22}u_2^2)\varphi &= -\sqrt{g_{11}}[u'_1]^2 - \sqrt{g_{22}}[u'_2]^2-(g_{12}-\sqrt{g_{11}g_{22}})\sqrt{g_{11}g_{22}}u_1^2u_2^2\\
  &\le 0
\end{align*}
Again, multiplying by $\varphi_+(x)=\max\{\varphi(x),0\}$ and integrating over $\RR$, we obtain:
$$  \int_\RR\left[ \frac12 (\varphi'_+)^2 + (g_{11}u_1^2+g_{22}u_2^2)\varphi_+^2\right]
\le 0,$$
so we conclude that $\varphi(x)\ge 0$ on $\RR$.  Recalling the definitions of $a,b$, we have $0\le\varphi(x)=\mu({u_1^2\over a^2}+{u_2^2\over b^2}-1)$, and the desired bound follows.
\end{proof}


%

\section{Second Variation and Monotonicity}

Looking at the second variation allows us to derive asymptotic properties of the domain walls obtained so far. In the end these properties will imply the minimizing character of these heteroclinics in a space with a stronger topology.
Let $U(x)=(u_1(x),u_2(x))\in X$ be an energy minimizing heteroclinic solution
obtained in Theorem~\ref{existence}.  For real-valued $U$, the second variation of energy, $D^2 E(U)$ may be expressed using the definition $W(\Psi)=F(|\psi_1|^2,|\psi_2|^2)$ in the following form,
\begin{align*}  D^2 E(U)[\Phi] &:=
    \left.{d^2\over d\eps^2}\right|_{\eps=0} E(U+\eps\Phi) \\
    &=
   \int_{-\infty}^\infty
\bigl\{ |\nabla\Phi|^2
+
\partial_1F(u_1^2,u_2^2) |\varphi_1|^2
  + \partial_2F(u_1^2,u_2^2) |\varphi_2|^2
  +  2\bigl[ \partial_1^2 F(u_1^2,u_2^2) (u_1,\varphi_1)^2  \\
  &\qquad
     + 2\partial_1\partial_2 F(u_1^2,u_2^2) (u_1,\varphi_1)(u_2,\varphi_2)
      + \partial_2^2 F(u_1^2,u_2^2) (u_2,\varphi_2)^2 \bigr]\bigr\}
dx,
\end{align*}
%
where $\Phi=(\varphi_1,\varphi_2)\in H^1_0(\RR;\CC^2)$ and the inner product of
complex numbers $(z,w):= \Re(\bar z\, w)$.
Having chosen $U=(u_1,u_2)\in X$, from Theorem~\ref{existence} we may
conclude that $D^2 E(U)$ is well-defined for $\Phi\in C_0^\infty(\RR;\CC^2)$, and
in fact we may extend its domain to include any $\Phi\in H^1(\RR;\CC^2)$.

Let $U=(u_1,u_2)$ be a real-valued minimizer of $E$ in the class $X.$
Writing $\Phi=\Phi_R + i\Phi_I$, with $\Phi_R=(\varphi_{1,R},\varphi_{2,R})$,
$\Phi_I=(\varphi_{1,I},\varphi_{2,I})$ in its real and imaginary parts,
we associate to the quadratic form $D^2 E(U)$ the two self-adjoint linearizations $L_+$ and $L_-$ so that
\begin{equation}
\label{second-variation}
D^2 E(U)[\Phi] = \langle \Phi_R, L_+\Phi_R\rangle +
                                   \langle \Phi_I, L_-\Phi_I\rangle.
\end{equation}
With this decomposition, we obtain formulae for $L_\pm$.  First, the real part is given in terms of the usual real-valued linearization of the Euler-Lagrange equations,
\begin{align}\nnn
L_+\Phi_R &= \begin{bmatrix}
-\partial_x^2 + \partial_1F(u_1^2,u_2^2) + 2\partial_1^2F(u_1^2,u_2^2) u_1^2
   & 2\partial_1\partial_2F(u_1^2,u_2^2) u_1u_2  \\
2\partial_1\partial_2F(u_1^2,u_2^2) u_1u_2 &
-\partial_x^2 + \partial_2F(u_1^2,u_2^2) + 2\partial_2^2F(u_1^2,u_2^2) u_2^2
\end{bmatrix}\Phi_R \\
\label{L+}
&= \partial_x^2\Phi_R + \frac12 D^2W(U)\Phi_R.
\end{align}
The imaginary part is a diagonal operator:
\begin{align}\nnn
L_-\Phi_I &= \begin{bmatrix}
-\partial_x^2 + \partial_1F(u_1^2,u_2^2) & 0  \\
0 &
-\partial_x^2 + \partial_2F(u_1^2,u_2^2) 
\end{bmatrix}\Phi_I \\
\label{L-}
&= \partial_x^2\Phi_I + DF(u_1^2,u_2^2):\Phi_I.
\end{align}
%
%

Properties of the second variation are characterized in the following theorem. They record useful information that will allow us to derive spectral stability of the linearized operator about $U.$

\begin{theorem}\label{secondvar}  Assume (W1)--(W4), and let $U=(u_1,u_2)$ by any energy minimizing solution of \eqref{gensys} in $X$.
\begin{enumerate}
\item[(i)]
The quadratic form $D^2 E(U)$ is positive semi-definite on $H^1_0(\RR;\CC^2)$, and each operator $L_\pm$ is also positive semi-definite on $H^1_0(\RR;\RR^2)$.
\item[(ii)]  Zero is an eigenvalue of $L_+$, with associated eigenfunction $U'(x)$.
\item[(iii)]  $\sigma_{ess}(L_-)=[0,\infty)$, and there exists $\Sigma_0>0$ with $\sigma_{ess}(L_+)=\left[\Sigma_0,\infty\right)$.
\end{enumerate}
If in addition we assume,
$$
\partial_1\partial_2F(\xi)\gneq 0 \quad\text{for all $\xi\in\RR^2_+$,}
\leqno{{\rm (W5)}}
$$
\noindent we have:
\begin{enumerate}
\item[(iv)]  $u'_1(x)>0$ and $u'_2(x)<0$ for all $x\in\RR$.
\item[(v)]  Zero is a simple eigenvalue of $L_+$.
\end{enumerate}
\end{theorem}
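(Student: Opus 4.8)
The plan is to treat the five assertions in the natural logical order, using the variational characterization of $U$ as a minimizer and standard Sturm--Liouville / ODE techniques for the monotonicity statements. For (i), positive semidefiniteness of $D^2E(U)$ on $H^1_0(\RR;\CC^2)$ is immediate from the fact that $U$ is a minimizer of $E$ in $X$: for any real-valued $\Phi_R\in C_0^\infty$, $U+\eps\Phi_R$ is an admissible competitor in $X$ for small $\eps$ (the asymptotic conditions are untouched by a compactly supported perturbation), so $\frac{d^2}{d\eps^2}E(U+\eps\Phi_R)\ge 0$; this gives $\langle\Phi_R,L_+\Phi_R\rangle\ge 0$. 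For the imaginary directions, one uses that $U=(u_1,u_2)$ has nonnegative components (part (c) of Theorem~\ref{existence}), so that the modulus perturbation $|U+i\eps\Phi_I|$ has the same asymptotics and $E(U+i\eps\Phi_I)\ge E(U)$; differentiating twice yields $\langle\Phi_I,L_-\Phi_I\rangle\ge 0$. Density of $C_0^\infty$ in $H^1_0$ and continuity of the quadratic forms (using the $L^\infty$ bound on $U$ from Theorem~\ref{apriori} and $F\in C^3$) finishes (i). For (ii), differentiate the Euler--Lagrange system \eqref{EL} in $x$ to get $-(U')'' + D^2W(U)U' = 0$, i.e.\ $L_+U' = 0$; and $U'\in L^2(\RR)$ by the exponential decay in part (e), so $U'$ is a genuine eigenfunction with eigenvalue zero.

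For (iii), I would compute the limits of the potential coefficients as $x\to\pm\infty$. Since $U(x)\to\mathbf a=(a,0)$ (resp.\ $\mathbf b=(0,b)$) exponentially, the matrix potential in $L_+$ converges to the constant matrix $\frac12 D^2W(\mathbf a)$ (resp.\ $\frac12 D^2W(\mathbf b)$), which is positive definite by the nondegeneracy hypothesis (W3), with smallest eigenvalue bounded below by some $\Sigma_0>0$ at each end; the off-diagonal entries vanish in the limit because they carry a factor $u_1u_2\to 0$. Then Weyl's theorem on the essential spectrum of Schr\"odinger operators with asymptotically constant potentials (the perturbation decaying exponentially is relatively compact) gives $\sigma_{ess}(L_+)=[\Sigma_0,\infty)$. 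For $L_-$, the diagonal entries are $-\partial_x^2+\partial_jF(u_1^2,u_2^2)$; at $x\to+\infty$ the first is $-\partial_x^2+\partial_1F(a^2,0)$, and $\partial_1F(a^2,0)=0$ because $\mathbf a$ is a critical point of $W$ restricted to the $u_1$-axis (equivalently, $u_1\equiv a$, $u_2\equiv 0$ solves \eqref{EL}), while the second entry $\partial_2F(a^2,0)$ is $>0$ by nondegeneracy; the roles swap at $-\infty$. So at each end one diagonal entry tends to $0$ and the other to a positive constant, and again by Weyl's theorem $\sigma_{ess}(L_-)=[0,\infty)$.

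The monotonicity statement (iv) is where the real work lies, and I expect it to be the main obstacle. The idea is a Sturm-type / moving-plane argument exploiting (W5). First, by part (e) of Theorem~\ref{existence} and the explicit unstable-manifold description, $u_1>0$, $0<u_2<b$ (or $<1$ in the model case) everywhere, and $u_1'>0$, $u_2'<0$ hold near $x=\pm\infty$. Consider the components of $U'=(u_1',u_2')$: they satisfy the linear system $L_+U'=0$, i.e.\ componentwise
$$
-(u_1')'' + c_{11}(x)u_1' + c_{12}(x)u_2' = 0, \qquad
-(u_2')'' + c_{12}(x)u_1' + c_{22}(x)u_2' = 0,
$$
where the off-diagonal coefficient $c_{12}(x)=2\partial_1\partial_2F(u_1^2,u_2^2)u_1u_2$ is strictly positive on $\RR$ by (W5) together with $u_1,u_2>0$. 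Setting $p:=u_1'$ and $q:=-u_2'$, the system becomes a cooperative linear system for $(p,q)$ with $p,q>0$ at $\pm\infty$; a maximum-principle argument for cooperative systems (or: if $p$ had a first zero, evaluate the equation there and use $q>0$, $c_{12}>0$ to derive a sign contradiction on $p''$, and symmetrically for $q$) shows $p,q>0$ throughout. One must be careful to rule out interior zeros of $u_1'$ or $u_2'$; the strict positivity of $c_{12}$ is exactly what forces the two components to "push" each other monotonically and prevents a change of sign. Finally (v): zero is a simple eigenvalue of $L_+$. The null space consists of bounded solutions $\Phi$ of $L_+\Phi=0$ decaying at infinity; any such $\Phi$ lies in the intersection of the stable manifold at $\mathbf a$ and the unstable manifold at $\mathbf b$ of the linearized flow, which are each two-dimensional in the four-dimensional phase space, and $U'$ is one such solution. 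Suppose $\Phi$ is a second, linearly independent, decaying solution. Using the cooperative structure again and a Sturm comparison with the positive solution $U'=(u_1',-u_2')$ — more precisely, the standard argument that a second-order cooperative system cannot have two linearly independent solutions both decaying at both ends unless they are multiples, once one of them is positive — one derives a contradiction. I would phrase this via the Wronskian-type quantity $w(x)=u_1'\varphi_1' - (u_1')'\varphi_1 + (-u_2')\varphi_2' - (-u_2')'\varphi_2$ or, more robustly, reduce to a scalar Sturm oscillation statement: since $U'$ has no zeros, it is the principal (ground-state) solution, and the kernel of a Schr\"odinger-type operator whose principal eigenfunction is nonvanishing is one-dimensional. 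The delicate point, and the crux of the argument, is handling the $2\times 2$ (rather than scalar) nature of $L_+$: the coupling term $\partial_1\partial_2 F$ must be genuinely present (this is why $g_{12}=0$ fails), and one needs (W5) not just at isolated points but as the strict inequality $\partial_1\partial_2F\gneq0$ to run the comparison on all of $\RR$.
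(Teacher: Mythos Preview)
Your treatment of (i)--(iii) matches the paper's: minimality gives $D^2E(U)\ge0$, differentiating \eqref{EL} gives $L_+U'=0$ with $U'\in H^1$ by the exponential decay in Theorem~\ref{existence}(e), and Weyl's theorem identifies the essential spectrum from the constant-coefficient limits at $\pm\infty$. One simplification worth noting: since $U$ minimizes over the complex class $Y$ (not only $X$), purely imaginary compactly supported perturbations $i\Phi_I$ are already admissible competitors, so $\langle\Phi_I,L_-\Phi_I\rangle\ge0$ follows directly from the second variation without the modulus-perturbation detour.

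For (iv)--(v) your route diverges from the paper's, and in (v) there is a genuine gap. Your cooperative-system argument for (iv) can in principle be made rigorous, but the sketch ``if $p$ had a first zero, use $q>0$ there to get a sign contradiction on $p''$'' is incomplete: knowing $p''(x_0)<0$ at the first zero of $p$ does not by itself yield a contradiction, and the argument threatens to become circular if $q$ could vanish first. This can be repaired with a careful strong-maximum-principle argument for cooperative systems, but it is more work than you indicate. The real problem is (v): the Wronskian and ``reduce to scalar Sturm'' ideas you float do not transfer to $2\times2$ systems in any direct way, and you yourself flag this as the unresolved crux.

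The paper handles (iv) and (v) in a single stroke via a rearrangement trick on the quadratic form. Given any null eigenfunction $\Phi=(\varphi_1,\varphi_2)$ of $L_+$, set $\tilde\Phi:=(|\varphi_1|,-|\varphi_2|)$. The diagonal part of $\langle\Phi,L_+\Phi\rangle$ is unchanged, while the cross term $\int 4\,\partial_1\partial_2F(u_1^2,u_2^2)\,u_1u_2\,\varphi_1\varphi_2\,dx$ can only decrease under $\Phi\mapsto\tilde\Phi$ because of (W5). Since $L_+\ge0$, $\tilde\Phi$ is therefore also a null eigenfunction. Each component of $\tilde\Phi$ is then a nonnegative solution of a scalar Schr\"odinger equation with a sign-definite right-hand side, and the strong maximum principle forces $\tilde\varphi_1>0$, $\tilde\varphi_2<0$ strictly. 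Hence \emph{every} null eigenfunction has components of fixed sign; applied to $U'$ this gives (iv), and (v) follows by the standard orthogonality contradiction: two null eigenfunctions both satisfying $\varphi_1>0$, $\varphi_2<0$ have strictly positive $L^2$ inner product and cannot be orthogonal. The insight you are missing is that (W5) should enter through the \emph{quadratic form}, propagating the sign property to \emph{all} ground-state eigenfunctions rather than to $U'$ alone; once that is in hand, simplicity is immediate.
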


An easy calculation shows that we obtain the full results of the theorem in the special cases given in the Introduction:
\begin{corollary}\label{2varcor}
All conclusions (i)--(v) are valid for solutions $U(x)\in X$ of \eqref{gen} and of \eqref{ho}.
\end{corollary}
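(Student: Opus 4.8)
The plan is to verify that the specific potentials $W$ in \eqref{gen} and \eqref{ho} satisfy the hypotheses (W1)--(W5), so that Theorem~\ref{secondvar} applies directly. For \eqref{gen}, hypotheses (W1)--(W4) were already checked in the Introduction (with $g_{12}>\sqrt{g_{11}g_{22}}$), so it remains only to check (W5). Writing $W(\psi_1,\psi_2)=F(\xi_1,\xi_2)$ with $\xi_j=|\psi_j|^2$, we have from \eqref{genpot}
\begin{equation*}
F(\xi_1,\xi_2)=\tfrac12\bigl(\sqrt{g_{11}}\,\xi_1+\sqrt{g_{22}}\,\xi_2-\mu\bigr)^2 + \bigl(g_{12}-\sqrt{g_{11}g_{22}}\bigr)\xi_1\xi_2,
\end{equation*}
and a direct differentiation gives $\partial_1\partial_2 F(\xi)=\sqrt{g_{11}g_{22}}+\bigl(g_{12}-\sqrt{g_{11}g_{22}}\bigr)=g_{12}>0$ for all $\xi\in\RR^2_+$, so (W5) holds. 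For \eqref{ho}, $F(\xi_1,\xi_2)=\tfrac14(\xi_1^2+\xi_2^2-1)^2+\tfrac{\gamma-1}{2}\xi_1^2\xi_2^2$; one verifies nonnegativity and that the zero set is exactly $\{(1,0),(0,1)\}$ when $\gamma>1$, computes the Hessian at those points to confirm nondegeneracy (W3), and checks the coercivity (W4) since the quartic leading term dominates. Then $\partial_1\partial_2 F(\xi)=2(\xi_1^2+\xi_2^2-1)\cdot 0 + \ldots$; more carefully, $\partial_1 F = \xi_1(\xi_1^2+\xi_2^2-1)+(\gamma-1)\xi_1\xi_2^2$ and $\partial_1\partial_2 F = 2\xi_1\xi_2 + 2(\gamma-1)\xi_1\xi_2 = 2\gamma\,\xi_1\xi_2\ge 0$, with strict positivity wherever $\xi_1,\xi_2>0$; since the condition (W5) as stated requires $\partial_1\partial_2F\gneq 0$ on $\RR^2_+$ (i.e. nonnegative and not identically zero), this is satisfied.

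Having checked (W1)--(W5) for each example, I would invoke Theorem~\ref{secondvar} verbatim to conclude (i)--(v) for the minimizers $U\in X$ of the corresponding energies. The only subtlety is bookkeeping: one must confirm that the minimizers produced by Theorem~\ref{existence} for these potentials are indeed the real-valued solutions of \eqref{gen} (respectively the system generated by \eqref{ho}), which follows from part (b) of Theorem~\ref{existence} together with the fact that the Euler--Lagrange equation $-U''+\nabla W(U)=0$ for $W$ in \eqref{genpot} is precisely \eqref{gen} after using $\nabla_\Psi W = (2\partial_1F\,\psi_1,\,2\partial_2F\,\psi_2)$ evaluated at real $\Psi=U$.

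The main (and essentially only) obstacle is the verification of (W2)--(W4) for the quartic example \eqref{ho}, since unlike the quadratic-in-$\xi$ case \eqref{genpot} the zero-set and nondegeneracy computations are slightly less transparent; in particular one should check that the quadratic form $(\gamma-1)\xi_1^2\xi_2^2$ does not create spurious critical points of $F$ on $\RR^2_+$ away from $(1,0)$ and $(0,1)$ and that the Hessian of $W$ (not merely of $F$) at those two points is positive definite on $\RR^2$. Once these routine but non-vacuous checks are done, the corollary is immediate. This is the content of the phrase ``an easy calculation'' preceding the statement.
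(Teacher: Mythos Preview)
Your approach is exactly what the paper intends: the paper's ``proof'' is merely the sentence ``An easy calculation shows that we obtain the full results of the theorem in the special cases given in the Introduction,'' and your proposal carries out precisely that calculation---verifying (W5) by computing $\partial_1\partial_2 F = g_{12}>0$ for \eqref{genpot} and $\partial_1\partial_2 F = 2\gamma\xi_1\xi_2\gneq 0$ for \eqref{ho}, then invoking Theorem~\ref{secondvar}. Your reading of $\gneq$ and your bookkeeping remarks are correct; nothing further is needed.
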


\begin{proof}[Proof of Theorem~\ref{secondvar}]
By the first part of the proof of Theorem~\ref{existence}, $U\in X\subset Y$ also minimizes $E$ over the space $Y$, and hence for any $\Phi\in C_0^\infty(\RR;\CC^2)$, we must have $D^2E(U)[\Phi]\ge 0$.  By density we may then conclude that the quadratic form $D^2E(U)\ge 0$ on $H^1(\RR;\CC^2)$.  By restricting to real-valued $\Phi$, we have $\langle \Phi, L_+\Phi\rangle \ge 0$ for all $\Phi\in H^1(\RR;\RR^2)$, and thus the self-adjoint operator $L_+\ge 0$.  Similarly, we obtain $L_-\ge 0$ by considering only variations $\Phi= i\Phi_I$ with $\Phi_I \in H^1(\RR;\RR^2)$, and thus (i) is verified.

\medskip

By (e) of Theorem~\ref{existence} we may conclude that $U'(x)=(u'_1(x),u'_2(x))\in H^1(\RR;\RR^2)$.  Moreover, by direct calculation we see that $L_+ U' =0$ holds for $x\in\RR$.  Thus, $\lambda=0$ is an eigenvalue of $L_+$, and $\lambda_0:=\inf\sigma(L_+)=0$, and (ii) is true in the general case.

Statement (iii) now follows from the asymptotic behavior of $U$ at infinity.
By property (e) of Theorem \ref{existence}, the decay of $U$ to either
${\bf a}$ or ${\bf b}$ is exponential. By Weyl's Lemma, see, e.g., \cite{HS},
the essential spectrum of $L_{\pm}$ coincide with the union of the spectra
of constant-coefficient operators
\begin{align*}
L_+^- =  -\partial_x^2 + \frac12D^2W(\mathbf{b}), \quad
L_+^+ = -\partial_x^2 + \frac12D^2W(\mathbf{a})
\end{align*}
and
\begin{align*}
L_-^- =   -\partial_x^2 + DF(\mathbf{b}^2):\  , \quad
L_-^+ = -\partial_x^2 + DF(\mathbf{a}^2):\  ,
\end{align*}
where $L_{\pm}^- = \lim_{x \to -\infty} L_{\pm}$ and $L_{\pm}^+ = \lim_{x \to +\infty} L_{\pm}$.
Since $L_{\pm}^{\pm}$ are constant-coefficient operators, their spectra are continuous, with lower bound given by the smallest eigenvalue of the (constant) potential matrix.  For $L_+^\pm$, we recall from (W3) that $\mathbf{b},\mathbf{a}$ are nondegenerate minima of $W$, and hence we may choose $\Sigma_0>0$ to be the smallest eigenvalue among those of $D^2W(\mathbf{b}),D^2W(\mathbf{a})$, and conclude that $\sigma_{ess}(L_+)=[\Sigma_0,\infty)$.

For $L_-^\pm$, we note that $\mathbf{a}$ being a critical point of $W$, we must have
$\partial_1F(\mathbf{a}^2)=\partial_1F(a^2,0)=0$, and hence $\sigma(L_-^+)=[0,\infty)$.  The same argument applies to $L_-^-$, and so we may conclude that (iii) holds in the general case.

Next, assume that $\partial_1\partial_2F(\xi)\ge 0$ for all $\xi\in\RR^2_+$.
To prove (iv), we use the variational characterization of $\lambda_0=\inf_{\|\Phi\|_{L^2}=1} \langle \Phi, L_+\Phi\rangle$.  As $\lambda_0$ is an eigenvalue, the infimum is attained by some $\Phi=(\varphi_1,\varphi_2)\in H^1(\RR;\RR^2)$.  We observe that
$\tilde\Phi=(|\varphi_1|,-|\varphi_2|)\in H^1(\RR;\RR^2)$, and
by (W5),
$\langle \tilde\Phi, L_+\tilde\Phi\rangle \le \langle \Phi, L_+\Phi\rangle$, and hence for any null vector $\Phi$, $\tilde\Phi$ is also an eigenvector.   Note that $\tilde\varphi_j$ solve equations of the form
$$  \tilde\varphi_1'' -c_{11}(x)\tilde\varphi_1 = c_{12}(x)\tilde\varphi_2 \le 0, \qquad
   \tilde\varphi_2'' + c_{22}(x)\tilde\varphi_2 = c_{21}(x)\tilde\varphi_1 \ge 0,
   $$
with coefficient matrix $[c_{ij}(x)]_{i,j}=-\frac12 D^2W(U)$ having all negative entries.  Thus, by the strong maximum principle applied to each equation individually, we may conclude that 
$
\tilde\varphi_1(x)>0$ and  $\tilde\varphi_2(x)<0$ for all $x\in\RR$.  As a consequence,
$\tilde\Phi=\Phi$, and all eigenfunctions corresponding to $\lambda=0$ satisfy 
\be\label{mon}
\tilde\varphi_1(x)>0, \quad \tilde\varphi_2(x)<0, \quad\text{for all $x\in\RR$}.
\ee
  Since $U'$ is such an eigenfunction, (iv) must hold.

The simplicity of the ground-state eigenvalue $\lambda_0=0$ now follows from a standard argument.  Indeed, assume that $\dim\ker(L_+)\ge 2$.  Then, there exists an eigenfunction $\Phi\in H^1(\RR;\RR^2)$, $L_+\Phi=0$, which is orthogonal to $U'$, $\langle U',\Phi\rangle =0$.  By the above paragraph, $\Phi=(\varphi_1,\varphi_2)$ must also satisfy \eqref{mon}, whence $0=\langle U',\Phi\rangle =\int_\RR [u'_1\varphi_1 + u'_2\varphi_2]dx>0$, a contradiction.  Thus (v) is proven.

\end{proof}

We observe that, at least in the more concrete example \eqref{hc}, the positivity of the second variation follows directly from the Euler-Lagrange equations themselves, without reference to energy minimization.  Indeed,  For the operator $L_+$, we write
$\varphi_{1,R} := A_1 u_1'$ and $\varphi_{2,R} := A_2 u_2'$ with $x$-dependent $A_{1,2}$, where
the components $u_{1,2}$ satisfy the differential equations (\ref{hc}). Integrating by parts,
we obtain
\begin{eqnarray*}
\langle \Phi_R, L_+\Phi_R\rangle & = & \int_{-\infty}^{\infty}
\left[ (A_1')^2 (u_1')^2 + (A_2')^2 (u_2')^2 + A_1^2 [(3 u_1^2 + \gamma u_2^2 - 1) (u_1')^2 - u_1' u_1''' ] \right. \\
& \phantom{t} & \left. + A_2^2 [(\gamma u_1^2 + 3 u_2^2 - 1) (u_2')^2 - u_2' u_2''' ] + 4 \gamma A_1 A_2 u_1 u_2 u_1' u_2' \right] dx.
\end{eqnarray*}
Substituting derivatives of the system (\ref{hc}), we obtain
\begin{eqnarray*}
\langle \Phi_R, L_+\Phi_R\rangle = \int_{-\infty}^{\infty}
\left[ (A_1')^2 (u_1')^2 + (A_2')^2 (u_2')^2 - 2 \gamma u_1 u_2 u_1' u_2' (A_1-A_2)^2 \right] dx.
\end{eqnarray*}
Since $u_{1,2} > 0$, $u_1' > 0$, and $u_2' < 0$, we confirm that the quadratic form
is non-negative and touches zero at only one eigenvector that corresponds to
$x$-independent $A_1$ and $A_2$ satisfying the constraint $A_1 = A_2$.

For the operator $L_-$, we write
$\varphi_{1,I} := B_1 u_1$ and $\varphi_{2,I} := B_2 u_2$ with $x$-dependent $B_{1,2}$. Integrating by parts
and using the differential equations (\ref{hc}), we obtain
\begin{eqnarray*}
\langle \Phi_I, L_- \Phi_I\rangle & = & \int_{-\infty}^{\infty}
\left[ (B_1')^2 u_1^2 + (B_2')^2 u_2^2 + B_1^2 [(u_1^2 + \gamma u_2^2 - 1) u_1^2 - u_1 u_1'' ] \right. \\
& \phantom{t} & \left. + B_2^2 [(\gamma u_1^2 + u_2^2 - 1) u_2^2 - u_2 u_2'' ] \right] dx \\
& = & \int_{-\infty}^{\infty}
\left[ (B_1')^2 u_1^2 + (B_2')^2 u_2^2 \right] dx.
\end{eqnarray*}
These computations shows that the quadratic form
is non-negative. 

\section{Spectral Stability}

Spectral stability of the domain wall solutions follows from analysis of eigenvalues in the linear eigenvalue problem
associated with the perturbation $(\Phi_R + i\Phi_I)e^{i t \lambda}$ of
the domain wall solutions $U$.
Here $U$ is a real-valued minimizer of $E$ in function space $X$
and $\Phi_R=(\varphi_{1,R},\varphi_{2,R})$, $\Phi_I=(\varphi_{1,I},\varphi_{2,I})$ are components
of the eigenvector in ${\rm Dom}(L_{\pm}) \subset L^2(\mathbb{R})$ that correspond to the eigenvalue
$\lambda \in \mathbb{C}$ of the associated spectral problem. 

\vspace{0.25cm}

\begin{proof1}{\em of Theorem \ref{theorem-stability}.}

We know from Theorem~\ref{secondvar} that the spectrum of $L_{+}$ has a zero eigenvalue of finite multiplicity (moreover this zero eigenvalue is simple whenever $W$ satisfies (W5)), while the rest of the spectrum is bounded from below by a positive number.
Considering $V:=(\ker L_{+})^{\perp},$ the orthogonal complement of the nullspace of $L_{+}$ in $L^2(\mathbb{R}),$ one has that for any nonzero eigenvalue $\lambda$ of the spectral stability problem (\ref{eigenvalue}),
the component $\Phi_I$ must belong to ${\rm Dom}(L_-) \cap V$.

We proceed to show any nonzero eigenvalue $\lambda$ must be purely imaginary.
To that end let $P$ denote the orthogonal projection from $L^2(\mathbb{R})$ to $V.$
Let $\lambda\neq 0.$ Since $\Phi_I=P\Phi_I$ and $\lambda$ is an eigenvalue associated to (\ref{eigenvalue}), we can decompose $\Phi_R$ as
$$
\Phi_R = - \lambda P L_+^{-1} P \Phi_I + (\mathbf{1}-P)\Phi_R,
$$
where $\mathbf{1}$ is the identity operator in $L^2(\mathbb{R}).$ Furthermore $ (\mathbf{1}-P)\Phi_R$ can be expressed uniquely as
$$
 (\mathbf{1}-P)\Phi_R=\sum_{i=1}^n c_i f_i,
$$
for some coefficients $c_i,$ where $f_1,\ldots, f_n$ form an orthonormal basis of $\ker L_{+}.$
The coefficients $c_1,\ldots,c_n$ can be equivalently found from $\Phi_I$ as
$$
c_i=\frac{\langle f_i,L_{-}\Phi_I\rangle}{\lambda}.
$$
The linear eigenvalue problem (\ref{eigenvalue}) for $\lambda\neq 0$ is equivalent to the generalized eigenvalue problem
\begin{equation}
\label{generalized}
P L_- P \Phi_I = -\lambda^2 P L_+^{-1} P \Phi_I,
\end{equation}
As in \cite[p.468]{Gallo}, the characterization of the eigenvalues of the generalized eigenvalue problem \eqref{generalized} is given by the Rayleigh quotient:
\begin{equation}
\label{Rayleigh}
-\lambda^2 = \inf_{\Phi \in {\rm Dom}(L_-^{\infty}) \cap V, \Phi \neq 0}
\frac{\langle L_- \Phi, \Phi \rangle}{\langle L_+^{-1} \Phi, \Phi \rangle}.
\end{equation}
By Theorem~\ref{secondvar},
there exists $C > 0$ such that $\langle L_+^{-1} \Phi, \Phi \rangle \geq C \| \Phi \|^2$ for all $\Phi\in{\rm Dom}(L_-) \cap V(\mathbb{R})$,
whereas $\langle L_- \Phi, \Phi \rangle \geq 0$. Therefore, $-\lambda^2 \geq 0$, hence $\lambda \in i \mathbb{R}$.
This proves that the domain wall solutions are spectrally stable for any choice of paramters $g_{ij},\mu$ in \eqref{gen}, \eqref{genlim}.
\end{proof1}

\begin{remark}
Note that under the assumption that zero is a simple eigenvalue of $L_{+}$ then $V$ takes the simple form
$$
V:=L^2_c(\mathbb{R}) := \left\{ \Phi \in L^2(\mathbb{R}) : \langle U', \Phi \rangle = 0 \right\}.
$$
In this case also, the decomposition of $\Phi_R$ is simply given by
$$
\Phi_R = - \lambda P L_+^{-1} P \Phi_I + a U',
$$
this time $a$ can be computed from $\Phi_I$ as
$$
a = \frac{\langle U', L_- \Phi_I \rangle}{\lambda \langle U', U' \rangle}.
$$
\end{remark}

\section{Nonlinear Stability}

In this section we prove the orbital stability of the domain walls of \eqref{GP} found as local minimizers of $E$ in $Y.$  We have almost all the elements in place; global well posedness, conservation of energy of which the domain walls are minimizers. One thing is missing; if we are given a minimizing sequence in $Y,$ we do not know if its limit coincides with $U.$ We conjecture that the minimizer $U(x)$ of the energy (\ref{energy}) in function space $X$
is unique, up to translation and gauge invariance.
For our purposes it is enough to show that, should there be several real-valued minimizers
$U$ of $E$ in $X$, then each one is isolated in the $H^1(\RR;\RR^2)$ norm (modulo translation.)

\begin{prop}\label{isolated} Let $U=(u_1,u_2)\in X$ be any energy minimizing solution of \eqref{hc}.  Assume that $\lambda=0$ is an isolated, simple eigenvalue of $L_+$ (defined as in \eqref{second-variation}.)
Then there exists $\eta_0>0$ such that if $V=(v_1,v_2)\in X$ is any other solution of \eqref{hc}, then either
$$  \inf_{\tau\in\RR} \| U(\cdot)-V(\cdot -\tau)\|_{L^2}\ge \eta_0, $$
or there exists $\tau\in \RR$ such that $V(x-\tau)=U(x)$.
\end{prop}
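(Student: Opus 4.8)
The plan is to argue by contradiction, in the standard spirit of showing that a non-degenerate critical point is isolated modulo the acting symmetry group (here, translations). Suppose the conclusion fails: then there is a sequence $\{V_n\}\subset X$ of solutions of \eqref{hc}, with no $V_n$ equal to a translate of $U$, such that $d_n:=\inf_{\tau\in\RR}\|U(\cdot)-V_n(\cdot-\tau)\|_{L^2}\to 0$. By the argument used for part (e) of Theorem~\ref{existence}, which rests only on the heteroclinic boundary conditions, every solution in $X$ --- in particular each $V_n$, as well as $U$ --- decays exponentially to $\mathbf a$ at $+\infty$ and to $\mathbf b$ at $-\infty$. Consequently, for each fixed $n$, the map $\tau\mapsto\|U(\cdot)-V_n(\cdot-\tau)\|_{L^2}^2$ is finite, of class $C^1$, and tends to $+\infty$ as $|\tau|\to\infty$ (a large translate of $V_n$ is within $o(1)$ of one of the constants $\mathbf a,\mathbf b$ on an interval of growing length on which $U$ is bounded away from it). Hence the infimum is attained at some $\tau_n\in\RR$. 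Replacing $V_n$ by $V_n(\cdot-\tau_n)$ --- still a solution of the autonomous system \eqref{hc}, still not equal to $U$ --- we may assume $\varepsilon_n:=\|W_n\|_{L^2(\RR)}\to 0$, where $W_n:=V_n-U\not\equiv0$. Setting to zero the $\tau$-derivative at the minimum yields $\langle W_n,V_n'\rangle=0$; since $\langle W_n,W_n'\rangle=\tfrac12\int_\RR(|W_n|^2)'\,dx=0$ (the endpoint values of $W_n$ vanish) and $V_n'=U'+W_n'$, this is the orthogonality relation
\be\label{isol-orth}
\langle W_n,U'\rangle=0\qquad\text{for all }n.
\ee

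\textbf{Bootstrap and rescaling.} Subtracting the Euler--Lagrange equations for $V_n$ and $U$ and using Taylor's formula, $W_n$ solves the linear system $-W_n''+M_n(x)W_n=0$ with $M_n(x)=\tfrac12\int_0^1 D^2W\big((1-t)U(x)+tV_n(x)\big)\,dt$; by the a priori bound $\|U\|_{L^\infty},\|V_n\|_{L^\infty}\le K$ of Theorem~\ref{apriori} together with $W\in C^3$, the $M_n$ are bounded in $L^\infty(\RR)$ uniformly in $n$. Since $\|W_n\|_{L^2(\RR)}\to0$, interior ODE estimates give $W_n\to0$ in $C^1_{loc}(\RR)$, hence $M_n\to\tfrac12 D^2W(U)$ in $C^0_{loc}(\RR)$. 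Put $\phi_n:=W_n/\varepsilon_n$, so $\|\phi_n\|_{L^2}=1$ and $-\phi_n''+M_n\phi_n=0$. Because $U,V_n$ decay exponentially, $W_n$ and hence $\phi_n$ belong to $H^2(\RR;\RR^2)\hookrightarrow C^1(\RR;\RR^2)$; from the equation $\|\phi_n''\|_{L^2}\le\|M_n\|_{L^\infty}$, and then $\|\phi_n'\|_{L^2}^2=-\langle\phi_n,\phi_n''\rangle\le\|\phi_n\|_{L^2}\|\phi_n''\|_{L^2}=\|\phi_n''\|_{L^2}$, so $\{\phi_n\}$ is bounded in $H^2(\RR;\RR^2)$, and in particular $\sup_n\|\phi_n\|_{C^1(\RR)}<\infty$. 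Passing to a subsequence, $\phi_n\to\phi$ in $C^1_{loc}(\RR)$ with $\phi\in H^2(\RR;\RR^2)$ and $L_+\phi=-\phi''+\tfrac12 D^2W(U)\phi=0$.

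\textbf{The main obstacle: no loss of mass.} The step I expect to be hardest is upgrading $C^1_{loc}$-convergence to convergence in $L^2(\RR)$, so that $\|\phi\|_{L^2}=1$; for this I would establish a uniform exponential tail bound for $\phi_n$. Since $(U(x),U'(x))\to(\mathbf a,0)$ and $V_n\to U$ in $C^1_{loc}$, for all $n$ large $(V_n(x),V_n'(x))$ enters a prescribed small neighborhood of $(\mathbf a,0)$ by a fixed time $R_0$; being a trajectory of \eqref{hc} that converges to the hyperbolic equilibrium $\mathbf a$, it then stays, for all $x\ge R_0$, inside a neighborhood of $\mathbf a$ on which $D^2W\ge2\mu^2 I$ --- here (W3), that $\mathbf a$ is a nondegenerate minimum, is essential --- uniformly in $n\ge N_0$; symmetrically near $\mathbf b$ for $x\le-R_0$. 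Hence $M_n(x)\ge\mu^2 I$ for $|x|\ge R_0$ and all $n\ge N_0$, so $\rho_n:=|\phi_n|^2$ obeys $\rho_n''\ge 2\,\phi_n\cdot M_n\phi_n\ge 2\mu^2\rho_n$ there, with $\rho_n\to0$ at $\pm\infty$. A scalar maximum-principle comparison with $\rho_n(R_0)\,e^{-\sqrt2\,\mu(|x|-R_0)}$ gives $|\phi_n(x)|\le C\,e^{-\frac{\mu}{\sqrt2}(|x|-R_0)}$ for $|x|\ge R_0$, with $C=\sup_n\|\phi_n\|_{C^1(\RR)}$. Combined with $\phi_n\to\phi$ in $C^1_{loc}$, this uniform tail bound forces $\phi_n\to\phi$ in $L^2(\RR)$, so $\|\phi\|_{L^2}=1$.

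\textbf{Conclusion.} By Theorem~\ref{secondvar}, under the present hypotheses $\lambda=0$ is a simple eigenvalue of $L_+$ with eigenfunction $U'$, so $\phi=cU'$ for some $c\neq0$. But letting $n\to\infty$ in \eqref{isol-orth} gives $\langle\phi,U'\rangle=0$, whence $c\|U'\|_{L^2}^2=0$ and $c=0$, a contradiction. Therefore no such sequence $\{V_n\}$ exists, which is exactly the asserted dichotomy. To summarize, the argument is pure bootstrapping plus the variational input of Theorem~\ref{secondvar} (simplicity of $0\in\sigma(L_+)$); the one delicate ingredient is the uniform exponential control of the rescaled differences $\phi_n$ near the wells, which prevents concentration escaping to $\pm\infty$ in the blow-up limit.
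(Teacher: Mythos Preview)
Your argument is correct and shares the paper's overall contradiction framework: choose an optimal translate so that the difference $W_n=V_n-U$ is $L^2$-orthogonal to $U'$, normalize $\phi_n=W_n/\|W_n\|_{L^2}$, and derive a contradiction from the simplicity of the zero eigenvalue of $L_+$.  The route to the contradiction, however, is genuinely different.

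The paper Taylor-expands the Euler--Lagrange map to \emph{second} order, obtaining
\[
L_+\tilde\Phi_n=-\tfrac12\,\|\Phi_n\|_{L^2}\,D^3W(U+s_n\Phi_n)[\tilde\Phi_n,\tilde\Phi_n],
\]
and then pairs with $\tilde\Phi_n$ to show $\langle\tilde\Phi_n,L_+\tilde\Phi_n\rangle\to 0$ directly, using only the uniform $L^\infty$ bound of Theorem~\ref{apriori} and a Sobolev bootstrap on $\|\tilde\Phi_n\|_{L^\infty}$.  Since $\tilde\Phi_n\perp U'$ with $\|\tilde\Phi_n\|_{L^2}=1$, this contradicts the spectral gap $\langle\Phi,L_+\Phi\rangle\ge\lambda_1\|\Phi\|_{L^2}^2$ on $\{U'\}^\perp$ without ever passing to a limit function.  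In particular, the paper needs no uniform exponential tail control and no ``no loss of mass'' argument.

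You instead expand to first order (integral remainder), obtain a homogeneous equation $-\phi_n''+M_n\phi_n=0$, extract an $H^2$-weak limit $\phi$ solving $L_+\phi=0$, and use the simplicity of the kernel to force $\phi\in\mathrm{span}\{U'\}$, contradicting the orthogonality once you know $\|\phi\|_{L^2}=1$.  The price is precisely the step you flagged as delicate: the uniform exponential tail bound on $\phi_n$, which you obtain from (W3) via a maximum-principle comparison for $|\phi_n|^2$.  That step is fine as written, but note it hinges on knowing that each $V_n$ stays uniformly close to $\mathbf a$ (resp.\ $\mathbf b$) for all $x\ge R_0$ (resp.\ $x\le -R_0$); your appeal to the stable manifold theorem for the hyperbolic equilibria is the right justification.

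In short: the paper's quadratic-form argument is shorter and sidesteps the compactness issue entirely; your limiting-eigenfunction argument is a standard PDE blow-up that works equally well but requires the extra tail analysis.
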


We note that by Theorem~\ref{secondvar} the hypothesis on the ground state of $L_+$ is satisfied for the Gross-Pitaevskii examples \eqref{GP}, \eqref{gen}.

\begin{proof}
First, fix a solution $U\in X$ of \eqref{hc}.  We observe that
\begin{equation}\label{UUp}  \int_\RR U\cdot U'(x)\, dx = \int_\RR \frac12 {d\over dx} |U(x)|^2\, dx
    = 0.
\end{equation}
Let $V\in X$ be any solution of \eqref{hc} which is geometrically distinct from $U$; that is, $V(x-\tau)\neq U(x)$ holds for every $\tau\in\RR$.  We first claim that there exists $\sigma\in\RR$ which attains the minimum value of
$$  \inf_{\tau\in\RR} \| U(\cdot-\tau) - V(\cdot)\|_{L^2(\RR)} =
          \| U(\cdot-\sigma) - V(\cdot)\|_{L^2(\RR)}.
$$
Indeed, let $f(\tau):= \int_\RR \left( U(x-\tau) - V(x)\right)^2\, dx.$  Then, $f$ is differentiable on $\RR$, and
$\lim_{\tau\to\pm\infty} f(\tau)=+\infty$.  Thus, the minimum value of $f$ is achieved at some $\sigma\in\RR$.  Furthermore, $\sigma$ is a critical point, and hence
\begin{align*}
0=f'(\sigma) &= -2\int_\RR \left[ U(x-\sigma)- V(x)\right] \cdot U'(x-\sigma)\, dx \\
&= 2 \int_\RR  V(x)\cdot U'(x-\sigma)\, dx,
\end{align*}
using \eqref{UUp}.  Thus, we have the additional orthogonality condition:
\begin{equation}\label{Vs}
    \int_\RR V(x+\sigma)\, U'(x)\, dx =0.
\end{equation}
Denote by $V^\sigma(x)=V(x+\sigma)$, with $\sigma$ as in \eqref{Vs}.  Let $\Phi=V^\sigma - U\in H^1(\RR;\RR^2)$ by the decay estimate (c) of Theorem~\ref{existence}.  Note that by \eqref{UUp}, \eqref{Vs}, we have
$\langle \Phi , U'\rangle_{L^2} =0$, i.e., $\Phi\in Z:=\text{span}\,\{U'\}^\perp$.

Finally, we prove the Proposition by contradiction: suppose $V_n$ is a sequence of solutions of \eqref{hc} with $E(V_n)=m$, and
$\inf_{\tau\in\RR}\| V_n(\cdot+\tau)-U(\cdot)\|_{L^2(\RR)}\to 0$.
Let $\sigma_n\in\RR$ be chosen as above, so that \eqref{Vs} holds for each $V_n$, and let $V_n^{\sigma_n}(x):=V(x+\sigma_n)$.  Define
$\Phi_n:= V_n^{\sigma_n}- U$.  We write the equation satisfied by $V^{\sigma_n}_n$ in the form $0 = \mathcal{G}(V^{\sigma_n}_n)= -\partial_x^2 V^{\sigma_n}_n + DW(V^{\sigma_n}_n)$, and use the Taylor expansion to second order on the function $G$:  for each $n$ there exists $s_n\in (0,1)$ such that
$$   0=G(V^{\sigma_n}_n)=G(U+\Phi_n)=
           G(U) + DG(U)\Phi_n + \frac12 D^2G(U+s_n\Phi_n)[\Phi_n,\Phi_n].  $$
Since $U$ solves the equation $G(U)=0$.  Moreover, $DG(U)\Phi_n=L_+\Phi_n$, and $D^2G(U)=D^3W(U)$, and thus the above equation yields:
$$     L_+\Phi_n = -\frac12 D^3W(U+s_n\Phi_n)[\Phi_n,\Phi_n].
$$
Set $\tilde \Phi_n:= \Phi_n/\|\Phi_n\|_{L^2}\in Z$.  Then, by homogeneity we have
\begin{equation}\label{Phi_eq}    L_+\Phi_n =-\frac12 \|\Phi_n\|_{L^2}\, D^3W(U+s_n\Phi_n)[\tilde\Phi_n,\tilde\Phi_n].
\end{equation}
By \eqref{apest} (see Theorem~\ref{apriori}), there is a universal constant $C_1$ such that $\|U(x)\|_{L^\infty}, \|V_n(x)\|_{L^\infty}\le C_1$, and hence $\|\Phi_n\|_\infty=\|V_n-U\|_\infty\le 2C_1$.  By (W1), $D^3W$ is uniformly bounded on bounded sets, and thus we may estimate:
\begin{align} \label{Lest}  \langle \tilde\Phi_n, L_+\tilde\Phi_n\rangle_{L^2} &=
         -\frac12  \int_\RR \sum_{i,j,k}
                \partial_{ijk}W(U+s_n\Phi_n)\tilde\varphi_{n,i}\tilde\varphi_{n,j}\varphi_{n,k} \\
                \nnn
         &\le C_1 \|\Phi_n\|_{L^\infty}\|\tilde\Phi_n\|_{L^2}^2 \le C_2,
\end{align}
is uniformly bounded in $n$.  From this we conclude the uniform bound on the derivatives,
$$  \int_\RR  |\tilde\Phi'_n|^2 dx \le \langle \tilde\Phi_n, L_+\tilde\Phi_n\rangle_{L^2} - \int_\RR D^2W(U)[\tilde\Phi_n,\tilde\Phi_n]\, dx
\le C_2 + C_1\|\tilde\Phi_n\|_{L^2}^2=C_2+C_1.
$$
By the Sobolev embedding we conclude that $\|\tilde\Phi_n\|_{L^\infty}\le C_3$ is uniformly bounded, and we may improve the estimate \eqref{Lest},
\begin{align*}
  \langle \tilde\Phi_n, L_+\tilde\Phi_n\rangle_{L^2}
  &= -\frac12 \|\Phi_n\|_{L^2} \int_\RR \sum_{i,j,k}
                \partial_{ijk}W(U+s_n\Phi_n)\tilde\varphi_{n,i}\tilde\varphi_{n,j}\tilde\varphi_{n,k}  \\
    &\le C_1\|\Phi_n\|_{L^2}\|\tilde\Phi_n\|_{L^2}^2\|\tilde\Phi_n\|_{L^\infty} \to 0.
\end{align*}
Since $\tilde\Phi_n\in Z$ and $\|\tilde\Phi_n\|_{L^2}=1$ for all $n$, we arrive at a contradiction, as the quadratic form $\langle \Phi, L_+\Phi\rangle_{L^2}$ is strictly positive definite for $\Phi\in Z$.  In conclusion, each real minimizer $U$ is isolated in $L^2(\RR)$ norm.
\end{proof}

By combining Proposition~\ref{isolated} with statement (b) of Theorem~\ref{existence}, we have:

\begin{corollary}\label{complex}
Under the hypotheses of Proposition~\ref{isolated}, for any complex-valued minimizer $\Psi\in Y$ of $E(\Psi)$, either
$$  \inf_{\tau,\alpha_1,\alpha_2\in\RR}
\left\| U(\cdot)-\begin{bmatrix} e^{i\alpha_1}\psi_1\\ e^{i\alpha_2}\psi_2\end{bmatrix}(\cdot -\tau)\right\|_{L^2}\ge \eta_0, $$
or there exists $\tau,\alpha_1,\alpha_2\in \RR$ such that $e^{i\alpha_j}\psi_j(x-\tau)=u_j(x)$, $j=1,2$.
\end{corollary}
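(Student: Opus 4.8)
The plan is to reduce the complex-valued statement to Proposition~\ref{isolated} by using the structure of minimizers furnished by parts (b)--(c) of Theorem~\ref{existence}. First I would note that if $\Psi=(\psi_1,\psi_2)\in Y$ is a minimizer of $E$, then by Theorem~\ref{existence}(b)--(c) there exist constants $\beta_1,\beta_2\in\RR$ and a real-valued, nonnegative minimizer $V=(v_1,v_2)\in X$ of $E$ with $\psi_j=e^{i\beta_j}v_j$ for $j=1,2$; in particular $V$ solves the system \eqref{hc}, so $V$ is an admissible competitor in Proposition~\ref{isolated}.

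The second step is an elementary phase optimization yielding the exact identity
$$\inf_{\tau,\alpha_1,\alpha_2\in\RR}\left\| U(\cdot) - \begin{bmatrix} e^{i\alpha_1}\psi_1\\ e^{i\alpha_2}\psi_2\end{bmatrix}(\cdot-\tau)\right\|_{L^2} = \inf_{\tau\in\RR}\| U(\cdot)-V(\cdot-\tau)\|_{L^2}.$$
Indeed, since $u_j\ge 0$ and $v_j\ge 0$, for all $x,\tau,\gamma\in\RR$,
$$|u_j(x)-e^{i\gamma}v_j(x-\tau)|^2 = u_j(x)^2 + v_j(x-\tau)^2 - 2u_j(x)v_j(x-\tau)\cos\gamma \ge |u_j(x)-v_j(x-\tau)|^2,$$
with equality at $\gamma=0$. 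Writing $e^{i\alpha_j}\psi_j = e^{i(\alpha_j+\beta_j)}v_j$, integrating over $x$, and summing the inequalities over $j=1,2$, one sees that for each fixed $\tau$ the infimum over $\alpha_1,\alpha_2$ of the left-hand distance equals $\|U(\cdot)-V(\cdot-\tau)\|_{L^2}$ and is attained at $\alpha_j=-\beta_j$; minimizing also over $\tau$ gives the claimed identity.

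Finally I would invoke Proposition~\ref{isolated} for the pair $U$, $V$: either $\inf_{\tau\in\RR}\|U(\cdot)-V(\cdot-\tau)\|_{L^2}\ge\eta_0$, which by the identity above is exactly the first alternative of the corollary; or there exists $\tau\in\RR$ with $V(x-\tau)=U(x)$ for all $x$, i.e.\ $v_j(x-\tau)=u_j(x)$, and then $\alpha_j:=-\beta_j$ gives $e^{i\alpha_j}\psi_j(x-\tau)=u_j(x)$ for $j=1,2$, which is the second alternative. I do not anticipate any genuine difficulty: the whole content is carried by Proposition~\ref{isolated} and Theorem~\ref{existence}(b)--(c), and the only point deserving a moment's attention is that nonnegativity of the real envelopes $u_j,v_j$ makes the optimal relative phase explicit ($\gamma_j=0$), so that the reduction to the real $L^2$ distance is an equality rather than merely one inequality.
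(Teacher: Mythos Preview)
Your argument is correct and is precisely the intended route: the paper itself offers no separate proof, stating only that the corollary follows by combining Proposition~\ref{isolated} with Theorem~\ref{existence}(b), and your reduction via constant-phase factorization and the elementary phase optimization makes this explicit. The one step you spell out beyond the paper --- that nonnegativity of $u_j,v_j$ forces the optimal phase $\gamma_j=0$, turning the reduction into an equality --- is exactly the right observation.
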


A careful inspection of the proof of Proposition \ref{isolated} leads us to a further corollary,
where we recall the definition of the energy space $\mathcal{D}$ from \eqref{D}.

\begin{corollary}\label{Sch}
Let $U=(u_1,u_2)$ be a minimizing solution of \eqref{hc} and assume
zero is a simple eigenvalue of $L_+$.
Then, there are constants $l_1, l_2, \varepsilon_0>0$  such that if $\Xi\in\mathcal{D}$ with
\begin{eqnarray}\label{above}
&& l_1<\inf_{\theta_1,\theta_2,\alpha\in\mathbb{R}}\rho_A\left(\Xi,(\exp i\theta_1 u_1(\cdot+\alpha),\exp i\theta_2 u_2(\cdot+\alpha))\right)<l_2,\nonumber\\
&& \mbox{then }\nonumber\\ && E(\Xi)>\inf_{Y} E(\cdot)+\varepsilon_0.
\end{eqnarray}
\end{corollary}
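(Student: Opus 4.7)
The plan is to argue by contradiction. Suppose the conclusion fails; then for positive constants $l_1,l_2,\varepsilon_0$ to be determined one produces a sequence $\Xi_n\in\mathcal D$ with $l_1<d(\Xi_n)<l_2$ and $E(\Xi_n)\to m$, where $m:=\inf_Y E$ and $d(\Xi)$ denotes the infimum over $(\theta_1,\theta_2,\alpha)$ appearing in the statement. Taking $l_2$ small, the $L^2$ ingredients of $\rho_A$ against a near-optimizing $\alpha_n$ put $|\xi_{n,j}|-u_j(\cdot+\alpha_n)$ in $H^1(\RR)$ with norm $O(l_2)$, and Sobolev embedding forces $|\xi_{n,j}|$ to inherit the asymptotic behavior of $u_j$ at $\pm\infty$; hence $\Xi_n\in Y$ and $\{\Xi_n\}$ is a minimizing sequence for $E$ over $Y$. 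Theorem~\ref{existence}(a)-(b) then produces translates $\tau_{n_k}$ and a minimizer $\Psi^*=(e^{i\beta_1^*}u_1^*,e^{i\beta_2^*}u_2^*)$ with $U^*=(u_1^*,u_2^*)\in X$ real such that $\rho_{A'}(\Xi_{n_k}(\cdot+\tau_{n_k}),\Psi^*)\to 0$ for every $A'>0$.

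Corollary~\ref{complex} applied to $U^*$ gives a dichotomy: either (a) $U^*$ is a translate of $U$, or (b) $\inf_\sigma \|U-U^*(\cdot-\sigma)\|_{L^2(\RR;\RR^2)}\ge\eta_0$. Case (b) is excluded by the triangle inequality. From $d(\Xi_{n_k})\le l_2$ one gets $\bigl\||\xi_{n_k,j}|-u_j(\cdot+\alpha_{n_k})\bigr\|_{L^2}\le l_2+o(1)$, while the $\rho_{A'}$-convergence yields $\bigl\||\xi_{n_k,j}|-u_j^*(\cdot-\tau_{n_k})\bigr\|_{L^2}\to 0$; combining componentwise with $L^2$-translation invariance produces $\|U-U^*(\cdot-(\alpha_{n_k}+\tau_{n_k}))\|_{L^2}\le\sqrt 2\,l_2+o(1)$, which contradicts (b) as soon as $l_2<\eta_0/\sqrt 2$.

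In case (a), $\Psi^*(x)=(e^{i\beta_1^*}u_1(x+\gamma),e^{i\beta_2^*}u_2(x+\gamma))$ for some $\gamma\in\RR$, and the goal is to deduce $d(\Xi_{n_k})\to 0$, contradicting $d(\Xi_{n_k})\ge l_1>0$. Testing $d$ with $(\theta,\alpha)=(\beta^*,\gamma-\tau_{n_k})$ makes the reference orbit element $\Psi^*(\cdot-\tau_{n_k})$. The two $L^2$-pieces of $\rho_A(\Xi_{n_k},\Psi^*(\cdot-\tau_{n_k}))$ coincide with those of $\rho_A(\Xi_{n_k}(\cdot+\tau_{n_k}),\Psi^*)$ by translation invariance and therefore tend to zero. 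The $L^\infty(-A,A)$-piece, after the change of variable $y=x+\tau_{n_k}$, becomes a uniform bound for $\xi_{n_k,j}(\cdot+\tau_{n_k})-\psi_j^*$ on $[-A+\tau_{n_k},A+\tau_{n_k}]$, an interval that drifts with $\tau_{n_k}$. I handle this by combining the $\rho_{A'}$-convergence for arbitrarily large $A'$, the uniform global convergence $\bigl\||\xi_{n_k,j}|(\cdot+\tau_{n_k})-u_j^*\bigr\|_{L^\infty(\RR)}\to 0$ from Step~1 in the proof of Theorem~\ref{existence}, and the exponential decay of $U$ at $\pm\infty$ supplied by Theorem~\ref{existence}(e): inside any fixed window $[-A'',A'']$ the $\rho_{A''}$-convergence gives smallness; outside, both $\xi_{n_k,j}$ and the candidate are exponentially close to the common asymptotic value $\mathbf a$ or $\mathbf b$. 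The main obstacle is precisely this mismatch between the fixed interval $[-A,A]$ in the definition of $d$ and the translated window on which concentration-compactness produces convergence; the non translation-invariance of $\rho_A$ must be absorbed using both the global uniform convergence of the moduli and the exponential heteroclinic decay.
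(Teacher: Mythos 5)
The overall strategy (pass to a minimizing sequence in $Y$, invoke Theorem~\ref{existence}(a) to extract a limit after translations $\tau_{n_k}$, and then use Proposition~\ref{isolated} to identify the limit with the $U$-orbit) is reasonable and is essentially what the paper's hint suggests. However, the key step in case (a) has a genuine gap. You aim to show $d(\Xi_{n_k})\to 0$ by testing against $\Psi^*(\cdot-\tau_{n_k})$ with the \emph{fixed} phase $\beta^*$, and you assert that outside a compact window ``both $\xi_{n_k,j}$ and the candidate are exponentially close to the common asymptotic value $\mathbf a$ or $\mathbf b$.'' That is true for the \emph{modulus} $|\xi_{n_k,j}|$, but not for the complex function $\xi_{n_k,j}$ itself: when $\tau_{n_k}\to\pm\infty$, the window $(-A,A)$ sits in the region where one component has modulus near $1$ (not $0$), and there the phase of $\xi_{n_k,j}$ is not pinned down by any of the convergences you invoke. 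Step~1 of Theorem~\ref{existence} gives global uniform convergence of $|\xi_{n_k,j}|(\cdot+\tau_{n_k})$, not of $\xi_{n_k,j}(\cdot+\tau_{n_k})$; the $\rho_{A'}$-convergence controls the phase only on $|y|\le A'$, while the relevant interval $(-A-\tau_{n_k},A-\tau_{n_k})$ escapes every compact; and the derivative $L^2$ bound only controls the oscillation of the phase by $\|\xi'_{n_k,j}\|_{L^2(I)}\sqrt{|I|}$, which is useless since the gap between $(-A,A)$ and the transition region has unbounded length $\sim|\tau_{n_k}|$.

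To see that this is not a mere technicality, consider $\Xi_n:=\bigl(u_1(\cdot-n),\ e^{i\phi_n}u_2(\cdot-n)\bigr)$ with $\phi_n\equiv 0$ on $(-\infty,0]$, $\phi_n\equiv c$ on $[n/2,\infty)$, and $\phi_n$ linear in between. Since the phase varies over an interval of length $n/2$ where $u_2(\cdot-n)\approx 1$, the extra kinetic energy is $\tfrac12\int(\phi'_n)^2 u_2^2(\cdot-n)=O(1/n)$, so $E(\Xi_n)\to m$ and $\Xi_n$ is a minimizing sequence (with $\tau_n\sim n\to\infty$). But the transition of $u'_2(\cdot-n)$ sits where $\phi_n\equiv c$, while the fixed window $(-A,A)$ sits where $\phi_n\to 0$; the derivative $L^2$-term of $\rho_A$ then forces $\theta_2\approx c$ while the $L^\infty(-A,A)$-term forces $\theta_2\approx 0$, and one checks $d(\Xi_n)\gtrsim c\,\min(1,\|u'_2\|_{L^2})$, bounded away from $0$. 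So the implication ``minimizing sequence with $d<l_2$ implies $d\to 0$'' is simply false when $\tau_{n_k}$ is unbounded, and the claimed contradiction with $d(\Xi_{n_k})>l_1$ evaporates. Any correct proof of Corollary~\ref{Sch} must either rule out this escape of the translation parameter (which is \emph{not} done here), or derive the lower energy bound without passing through the false assertion $d(\Xi_{n_k})\to 0$, for instance by directly quantifying the excess energy produced by the mismatch between the phase of $\xi_{n,j}$ near the transition and on the window $(-A,A)$.
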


Now, we turn to the proofs of Theorems \ref{orb-stab} and \ref{Slowmotion}.

\vspace{0.25cm}

\begin{proof1}{\em of Theorem \ref{orb-stab}.}  We reason by contradiction and assume, given $A,\varepsilon>0,$  that for a sequence of positive numbers $\delta_n\to 0,$ we can find $\Psi_0^n\in \mathcal{D}\cap L^{\infty}$ and times $t_n$ such that, denoting by $\Psi^n(x,t)$ the unique global in time solution to \eqref{GP} with initial condition $\Psi_0^n,$ we have $\rho_A(\Psi_0^n, U)\leq \delta_n$  and for any $\theta_1, \theta_2 \in \mathbb{R}$

\begin{equation}\label{contr} \inf_{\alpha \in \mathbb{R}}\rho_A(\Psi^n(\cdot, t_n), (\exp i \theta_1 u_1(\cdot+\alpha), \exp i \theta_2 u_2 (\cdot +\alpha))\geq \varepsilon. \end{equation}

Now, because $\rho_A(\Psi_0^n, U)\to 0,$ Fatou's lemma implies that $E(\Psi_0^n)\to m=\inf_{Y} E(\cdot) .$  From conservation of energy along the flow we deduce the same for $E(\Psi^n(\cdot, t_n)).$  The $\Psi^n (\cdot, t_n)$'s are not necessarily elements of $Y,$ but we modify them so as to obtain a genuine minimizing sequence whose limit can be compared to a member of the orbit of  $U.$

Let $R_n$ be a sequence of positive numbers such that $R_n>A$ and
\begin{equation}\label{hp}
\int_{-R_n}^{R_n} e(\Psi^n(\cdot, t_n))
>E(\Psi^n(\cdot,t_n))-\frac{1}{n}.
\end{equation}
Define:
\begin{equation}
\hat{\psi}_n(x):=\left\{
\begin{matrix}
\Psi^n(x,t_n), \quad x\in[-R_n,R_n]\\
(f_n(x),g_n(x)),\quad x\not\in[-R_n,R_n]
\end{matrix}
\right.
\end{equation}
where $(f_n(x),g_n(x))$ is a continuous vector function satisfying
$$
(f_n(\pm R_n),g_n(\pm R_n))=\Psi^n(\pm R_n,t_n),
$$
$$
\lim_{x\to\pm\infty}(f_n(x),g_n(x))=\left(\frac{1\pm 1}{2},\frac{1\mp 1}{2}\right)
$$
and such that
$$
E(\hat{\psi}_n)<E(\Psi^n(\cdot,t_n))+\frac{1}{n}.
$$
Note that by \eqref{hp} and definition of $\hat{\psi}_n,$ $\rho_A(\hat{\psi}_n,\Psi^n(\cdot,t_n))\to 0.$

We appeal to part (a) of Theorem \ref{existence} to conclude that there is $\tau_n\in\mathbb{R}$ such that $\hat{\psi}_n(\cdot+\tau_n)$ converges in the topology induced by $\rho_A$ to a minimizer $V$ of $E$ in $Y.$

From Theorem \ref{existence} part (a) we also know $V=(e^{i\beta_1}v_1,e^{i\beta_2}v_2)$ where $(v_1,v_2)$ is a minimizing solution of \eqref{hc}.
By \eqref{contr}, Fatou's Lemma and Proposition \ref{isolated} we deduce
$$
\inf_{\tau\in\mathbb{R}}\Vert(v_1,v_2)-U(\cdot+\tau)\Vert_{L^2}\geq\eta_0.
$$
By continuity of the flow, the above implies that for some $\tilde{t}^n>0$
$$
l_1<\inf_{\theta_1,\theta_2,\alpha\in\mathbb{R}}\rho_A(\Psi^n(\cdot,\tilde{t}^n),(\exp i\theta_1 u_1(\cdot+\alpha),\exp i\theta_2 u_2(\cdot+\alpha)))<l_2,
$$
which by Corollary \ref{Sch} yields $E(\Psi^n(\cdot,\tilde{t}^n))=E(\Psi^n(\cdot,t_n))=E(\Psi_0^n)>m+\varepsilon_0,$
for $n$ large enough, a contradiction.
    \end{proof1}

As mentioned in the introduction, an easy adaptation  of the analysis in \cite{Bethuel} yields Theorem \ref{Slowmotion}.
We only highlight the main steps for convenience of the reader.

\vspace{0.25cm}

\begin{proof1}{\em of Theorem \ref{Slowmotion}.}
As in \cite{Bethuel}, it suffices to verify the following claim:
\begin{claim}\label{claim}
Given any $\eps > 0$ and $A > 0$, there exists some constant $K$, depending only on $A$, and some positive number $\delta > 0$ such that, if $U$ and $\Psi_0$ are as in Theorem~\ref{orb-stab} and if  \eqref{delta-nonlinear} holds, then
$$  |a(t)| \le K\eps,  $$
for any $t \in [0, 1]$, and for any of the points $a(t)$ satisfying inequality \eqref{epsilon-nonlinear} for some
$\theta(t) \in\RR$.
\end{claim}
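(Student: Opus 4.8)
The plan is to control the position of the domain wall through a single scalar functional that is conserved up to an error of size $\eps$ along the flow and that, when evaluated on the profiles occurring in \eqref{epsilon-nonlinear}, depends strictly monotonically on the translation parameter. Concretely, I would fix once and for all a weight $\chi\in C_c^\infty((-A,A))$ with $\chi\ge 0$ and $\chi\not\equiv 0$, and set $\mathcal P(\Psi):=\int_\RR\chi(x)\,|\psi_1(x)|^2\,dx$. Along the flow \eqref{Ham} of the system \eqref{gen}, the density $|\psi_1|^2$ obeys the continuity equation $\partial_t|\psi_1|^2+\partial_x j_1=0$ with current $j_1:=2\,\Im(\bar\psi_1\,\partial_x\psi_1)$, because the nonlinearity $\partial_1 F$ is real. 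Two features will drive the argument: first, $j_1$ vanishes identically on every profile $V=(e^{i\theta_1}u_1(\cdot+\alpha),e^{i\theta_2}u_2(\cdot+\alpha))$, since $u_1$ is real; and second, $g(\alpha):=\mathcal P(U(\cdot+\alpha))$ is strictly increasing, with $g'(\alpha)=\int_\RR\chi(x)(u_1^2)'(x+\alpha)\,dx>0$ — here I use $(u_1^2)'=2u_1u_1'>0$ on all of $\RR$, which holds by Theorem~\ref{existence}(c) together with the strict monotonicity of Theorem~\ref{secondvar}(iv) (available for \eqref{gen} via Corollary~\ref{2varcor}) — so that $g'\ge c_1>0$ on $[-1,1]$ for a constant $c_1$ depending only on $U$ and $\chi$.

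The first, and main, step is to prove the almost-conservation bound
$$ |\mathcal P(\Psi(t))-\mathcal P(\Psi_0)|\le C_A\,\eps,\qquad t\in[0,1], $$
with $C_A$ depending only on $A$ and $U$. From the continuity equation, $\tfrac{d}{dt}\mathcal P(\Psi(t))=\int_\RR\chi'(x)\,j_1(\Psi(t))\,dx$, and since $\supp\chi'\subset(-A,A)$ only the behaviour on $[-A,A]$ enters. By Theorem~\ref{orb-stab} there is for each $t$ a profile $V_t$ with $\rho_A(\Psi(t),V_t)\le\eps$; because $j_1(V_t)\equiv 0$, I would replace $j_1(\Psi(t))$ by $j_1(\Psi(t))-j_1(V_t)=2\,\Im[\bar\psi_1(\psi_1-v_1)'+(\bar\psi_1-\bar v_1)v_1']$ (writing $\psi_1=\psi_1(t)$ and $v_1$ for the first component of $V_t$) and estimate the two pieces: the one containing $(\bar\psi_1-\bar v_1)v_1'$ by $\|\chi'\|_\infty\,\|\psi_1-v_1\|_{L^\infty(-A,A)}\,\|u_1'\|_{L^1(\RR)}\le C_A\eps$, and the one containing $\bar\psi_1(\psi_1-v_1)'$, after one integration by parts moving the derivative onto $\chi'\bar\psi_1$, by Cauchy--Schwarz on $(-A,A)$ together with $\|\psi_1-v_1\|_{L^2(-A,A)}\le\sqrt{2A}\,\eps$, $\|\psi_1'\|_{L^2(\RR)}\le\|u_1'\|_{L^2(\RR)}+\eps$, and $\|\psi_1\|_{L^\infty(-A,A)}\le\|u_1\|_\infty+\eps$, the last two coming from $\rho_A(\Psi(t),V_t)\le\eps$ and translation invariance. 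The point is that no pointwise bound on $\psi_1'$ is needed, only its $L^2$ norm, which is all $\rho_A$ supplies. Integrating over $[0,t]$, $t\le 1$, gives the claimed inequality.

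With this in hand I would finish using two soft observations. First, $\mathcal P$ is Lipschitz for $\rho_A$ on configurations with a uniform $L^\infty(-A,A)$ bound: writing $|\psi_1|^2-|\varphi_1|^2=(|\psi_1|-|\varphi_1|)(|\psi_1|+|\varphi_1|)$ and using Cauchy--Schwarz on $\supp\chi$, one gets $|\mathcal P(\Psi)-\mathcal P(\Phi)|\le C_A\,\rho_A(\Psi,\Phi)$; note $\mathcal P$ is insensitive to the phases, so $\mathcal P(V)=g(\alpha)$ for $V$ as above. Now fix $t\in[0,1]$ and let $\alpha=a(t)$ be any center for which \eqref{epsilon-nonlinear} holds at time $t$ with some $\theta_1(t),\theta_2(t)$, so $\rho_A(\Psi(t),V)\le\eps$ for the corresponding $V$. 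Assuming (without loss) $\delta\le\eps$, the triangle inequality through $\mathcal P(\Psi(t))$ and $\mathcal P(\Psi_0)$ gives $|g(\alpha)-g(0)|=|\mathcal P(V)-\mathcal P(U)|\le C_*\eps$ with $C_*=C_*(A)$. If $\eps$ is small enough that $C_*\eps<c_1$, strict monotonicity of $g$ rules out $|\alpha|\ge 1$ (for then $|g(\alpha)-g(0)|\ge c_1>C_*\eps$), hence $|\alpha|<1$, and there $c_1|\alpha|\le|g(\alpha)-g(0)|\le C_*\eps$ forces $|\alpha|\le(C_*/c_1)\eps=:K\eps$, with $K$ depending only on $A$. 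This is Claim~\ref{claim}; Theorem~\ref{Slowmotion} then follows, exactly as in \cite{Bethuel}, by iterating the estimate over successive unit time intervals (with a continuity argument in $t$ to keep the centers within the range where $g$ has a definite slope).

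The step I expect to be the main obstacle is the almost-conservation estimate. The difficulty is that the flux $\int\chi'\,j_1(\Psi(t))$ involves $\psi_1'$ pointwise, which $\rho_A$ does not control; the estimate survives only because the current of every admissible profile vanishes identically — so one always estimates the difference $\Psi(t)-V_t$ rather than $\Psi(t)$ itself — and because a single integration by parts converts the dangerous term into an $L^2$ pairing. A secondary, more routine obstacle is justifying the continuity equation and the differentiation of $t\mapsto\mathcal P(\Psi(t))$ at the regularity of the energy-space solutions of Theorem~\ref{Zhidkow}; this is best handled in the time-integrated form, using the $\rho_A$-continuity of the flow recorded in that theorem together with the continuity of $t\mapsto\psi_1(\cdot,t)$ in $H^1(-A,A)$.
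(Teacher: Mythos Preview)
Your argument is correct and takes a genuinely different route from the paper's, though both rest on the same structural fact: because the coupling in \eqref{Ham} enters only through the moduli, each component obeys its own continuity equation, and the associated current vanishes identically on every profile of the form $(e^{i\theta_1}u_1(\cdot+\alpha),e^{i\theta_2}u_2(\cdot+\alpha))$. The paper tracks the translation through a localized \emph{center-of-mass} functional $\int x\,g_R(x)\bigl(\sqrt{g_{11}}|\psi_1|^2+\sqrt{g_{22}}|\psi_2|^2-\mu\bigr)\,dx$, first normalizing $U$ so that on translated profiles this returns $\alpha$ exactly, choosing $R$ large via the exponential decay of Theorem~\ref{existence}(e), and then invoking the motion identity \eqref{centerofmass}; a continuity-in-$t$ argument is used at the end to keep $|\alpha(t)|<\tfrac12$. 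You instead take the single-component mass $\int\chi\,|\psi_1|^2$ with $\chi$ supported in $(-A,A)$, and recover $\alpha$ not from an exact identity but from the strict monotonicity of $\alpha\mapsto g(\alpha)$, which is precisely where you spend hypothesis (W5) through Theorem~\ref{secondvar}(iv) and Corollary~\ref{2varcor}. Your version is more elementary and entirely local on $(-A,A)$---no large cutoff scale, no normalization of $U$, and no separate continuity argument (the global monotonicity of $g$ rules out $|\alpha|\ge 1$ directly once $\eps$ is small). The paper's version, on the other hand, does not require monotonicity of the components and ties more naturally to the momentum structure of the system; for the specific system \eqref{gen} treated in Theorem~\ref{Slowmotion} this distinction is immaterial, since (W5) is available there.
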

Indeed, once the claim is established, looking at the flow for $t\in [n,n+1 ) ,$ appealing to Claim \ref{claim} and using induction on $n ,$ the arguments of \cite{Bethuel} may be repeated verbatim, as the conclusion of Theorem~\ref{Slowmotion} follows without regard to the specific equation, depending only on energy conservation and the well-posedness of the initial value problem for the evolution equation.

 To adapt the proof in \cite{Bethuel} in our case we need to identify the following key elements.

{\it Center of mass.}  As a first step, we need an integral expression involving $U(\cdot-\alpha (t)),$ depending on quantities controlled along the flow by the energy, that can serve to follow $\alpha (t).$

{\it Approximating $\alpha(t)$ in terms of $\psi$. }  In a second step, the same integral expression above applied to $\psi$ needs to provide a well approximation of the one with $U(\cdot -\alpha (t))$  as a consequence  of orbital stability.

{\it Motion identity.}  We need a measure of how these integral expressions change; they should be controlled by a quantity that can be made arbitrarily small, again by pure energy considerations, uniformly in $t\in [0,1].$ This is the third step needed for the right setup.

According to this, to prove the claim, we choose a function which identifies a center of mass, in the spirit of \cite{Bethuel}.  

{\bf Step $1.$}

Let $U$ be a domain wall solution of \eqref{gen}, and choose a translation $U(\cdot-\tau)$ with the property that
\be\label{normal}  \int_\RR  x( \sqrt{g_{11}} u_1^2(x-\tau) + \sqrt{g_{22}} u_2^2(x-\tau) -\mu)\, dx =0.
\ee
Clearly such a choice is possible, as the integral above may be made arbitrarily large by choosing a large positive $\tau$, and arbitrarily negatively large for large negative $\tau$.  Denote by $U(x)$ the domain wall solution normalized in this way.  (For the symmetric case \eqref{hc}, $U$ is the symmetric solution.)  Define
$$  m(U):=\int_\RR ( \sqrt{g_{11}} u_1^2(x) + \sqrt{g_{22}} u_2^2(x) -\mu)\, dx.  $$
We note that for this $U$ (normalized as in \eqref{normal}) and any $a\in\RR$, we have
$$  {1\over m(U)} \int_\RR  x( \sqrt{g_{11}} u_1^2(x-a) + \sqrt{g_{22}} u_2^2(x-a) -\mu)\, dx = a.  $$

{\bf Step $2.$} 

Let $g:\RR\to [0,1]$ be a smooth function with $g(x)=1$ for $|x|\le 1$ and $g(x)=0$ for $|x|\ge 2$, and set $g_R(x)=g(x/R)$, $R\ge 1$.
For $\Psi\in Y$ and $a\in \RR$ define
$$  G_{a,R}(\Psi):= {1\over m(U)}\int_\RR g_R(x) x
    ( \sqrt{g_{11}} |\psi_1|^2(x-a) + \sqrt{g_{22}} |\psi_2|^2(x-a) -\mu)\, dx .
$$

We note now that without loss of generality one can assume $\alpha(0)=0$ by taking $\delta$ sufficiently small in \eqref{delta-nonlinear}. The exponential convergence of $U$ to its limits at $\pm \infty$ imply that
$$
G_{a,R}(U)\to G(U), \mbox{ as }R\to\infty.
$$
Furthermore, this convergence is uniform in $a$ on compact sets.
Because of this, we fix $R_0>0$ such that
\begin{equation}\label{alphaepsilon}
\vert G_{\alpha(t),R_0}(U)-\alpha(t)\vert<\varepsilon,
\mbox{ for all }t\mbox{ such that }\vert\alpha(t)\vert<1.
\end{equation}
Next, we note that Cauchy-Schwartz and  \eqref{epsilon-nonlinear} yield
\begin{equation}\label{alphapsi}
\vert G_{\alpha(t),R_0}(U)-G_{0,R_0}(\psi)\vert\leq C\varepsilon,
\end{equation}
for some positive constant $C$ independent of $\varepsilon.$
The above implies that $G_{0,R_0}(\psi)$ can be used to follow $\alpha(t)$
 with a precision of $\mathcal{O}(\varepsilon)$ which is what we wanted. We then turn to the quantitative dynamical property that lets us to exploit the approximation of $\alpha(t)$ by $G_{0,R_0}(\psi).$

\vskip.1in
{\bf Step $3.$}

 In the third step  one controls the evolution of the center of mass by considering a localized version of its motion law in terms of the momentum. As in Proposition~4.1 of \cite{Bethuel}, we have for any $R>0,$
\begin{multline}\label{centerofmass}
\frac{d}{dt}\int_\RR g_R(x)x ( \sqrt{g_{11}} |\psi_1|^2(x) + \sqrt{g_{22}} |\psi_2|^2(x) -\mu)\, dx \\
=2\int_{\mathbb{R}}[\sqrt{g_{11}}\langle i\psi_1,\partial_x \psi_1\rangle
+\sqrt{g_{22}}\langle i\psi_2,\partial_x\psi_2 \rangle]\partial_x(x\; g_R)dx.
\end{multline}
Indeed, differentiating under the integral sign, appealing to \eqref{GP}, and using the fact that for $j=1,2:$
$$
\int\langle i\psi_j,\psi_j(\vert\psi_j\vert^2+\gamma\vert\psi_{3-j}\vert^2-1)
\rangle(xg_R)=0,
$$
\eqref{centerofmass} follows.

To finish, let $\theta_1, \theta_2$ be such that $\vert\psi_1\vert =e^{i\theta_1}\psi_1$ and $\vert\psi_2\vert =e^{i\theta_2}\psi_2.$
Because
$$\int[\sqrt{g_{11}}\langle i e^{i\theta_1}u_1,\partial_x  (e^{i\theta_1}u_1)\rangle
+\sqrt{g_{22}}\langle i e^{i\theta_2} u_2,\partial_x (e^{i \theta_2}u_2) \rangle]\partial_x(x\; g_R)dx =0,$$
and again Cauchy-Schwartz together with \eqref{epsilon-nonlinear}, we see that
\begin{equation}\label{momentumslow}
\left\vert\int[\sqrt{g_{11}}\langle\psi_1,\partial_x\psi_1\rangle+\sqrt{g_{22}}\langle\psi_2,\partial_x\psi_2\rangle]\partial_x(x\;g_R)dx\right\vert\leq C\varepsilon.
\end{equation}

Let $\psi_{0,1}, \psi_{0,2}$ be the components of the initial condition $\psi_0.$

Thanks to \eqref{momentumslow}
\begin{eqnarray}
&&\left\vert\int_\mathbb{R} g_{R_0}(x)x \( \sqrt{g_{11}} (|\psi_1|^2-\vert\psi_{0,1}\vert^2) + \sqrt{g_{22}} (|\psi_2|^2-\vert\psi_{0,2}\vert^2)\)\, dx \right\vert\nonumber\\
&&=2\left\vert\int_0^t\int_{\mathbb{R}}[\sqrt{g_{11}}\langle i\psi_1,\partial_x \psi_1\rangle
+\sqrt{g_{22}}\langle i\psi_2,\partial_x\psi_2 \rangle]\partial_x(x\; g_R)dx\right\vert\nonumber\\
&&\leq C\varepsilon.
\end{eqnarray}
This last inequality together with \eqref{alphaepsilon} and \eqref{alphapsi} concludes the proof of the claim provided it holds that $\vert\alpha(t)\vert<\frac{1}{2}$ for all $t\in[0,1].$
This is a consequence of the continuity of the flow with respect to $\rho_A$ as in \cite{Bethuel}.

\end{proof1}

\section{Domain walls under a small localized potential}

We shall now consider persistence and stability of the domain wall solutions in the presence of a small
localized potential. Domain walls in this case satisfy the system of differential equations (\ref{hc-potential}).
We provide details for the symmetric case \eqref{hc}, but the same phenomena may be produced for more general equations, such as \eqref{gen} with only minor modifications.  We note that the linearized operators around the symmetric domain wall solution $U(x)\in X$ in this case may be represented as follows:
\begin{equation}
\label{operator-L-plus}
L_+ \Phi_{R} := \begin{pmatrix}  -\partial_x^2 + 3u_1^2 +\gamma u_2^2-1 & 2 \gamma u_1 u_2 \\
2 \gamma u_1 u_2 & -\partial_x^2 + \gamma u_1^2 + 3u_2^2-1 \end{pmatrix}
\begin{pmatrix} \varphi_{1,R} \\ \varphi_{2,R} \end{pmatrix},
\end{equation}
and
\begin{equation}
\label{operator-L-minus}
L_- \Phi_{I} := \begin{pmatrix}  -\partial_x^2 + u_1^2 +\gamma u_2^2-1 & 0  \\
0 & -\partial_x^2 + \gamma u_1^2 + u_2^2-1 \end{pmatrix}
\begin{pmatrix} \varphi_{1,I} \\ \varphi_{2,I} \end{pmatrix}.
\end{equation}

\vspace{0.25cm}

\begin{proof1}{\em of Theorem \ref{theorem-persistence}.}
Let us write
\begin{equation}
\label{decompositon}
U(x+s) = U_0(x) + W(x),
\end{equation}
where the leading term $U_0 = (u_1,u_2) \in X$ is the heteroclinic solution of the system (\ref{hc})
in Theorem \ref{existence} satisfying the symmetry reduction
$$
u_2(x) = u_1(-x) \quad \mbox{\rm for all} \quad x \in \mathbb{R},
$$
the parameter $s \in \mathbb{R}$ is to be uniquely determined from the condition
$\langle U_0', W \rangle = 0$, and the perturbation term $W = (w_1,w_2)$ satisfies the perturbed system
\begin{equation}
\label{inhomog-w}
L_+ W = - \epsilon V(x+s) (U_0 + W),
\end{equation}
where $L_+$ is given by (\ref{operator-L-plus}). By Theorem \ref{secondvar},
zero is the simplest and smallest eigenvalue of $L_+$ with ${\rm Ker}(L_+) = {\rm span}\{ U_0' \}$,
whereas the rest of the spectrum of $L_+$ is bounded away from zero.
To invert $L_+$ on the right-hand side of (\ref{inhomog-w}), we add the bifurcation equation
\begin{equation}
\label{bifurcation-eq}
F(s) := -\epsilon \langle U_0', V(x+s) (U_0 + W) \rangle = 0.
\end{equation}
If $F(s) = 0$ and $V \in L^2(\mathbb{R})$, the inhomogeneous equation (\ref{inhomog-w}) is closed
for $W \in H^2(\mathbb{R})$ and the implicit function theorem can be applied for sufficiently small $\epsilon$.
As a result, there exists a unique solution of (\ref{inhomog-w})
for $W \in H^2(\mathbb{R})$ subject to the orthogonality condition $\langle U_0', W \rangle = 0$
such that $\| W \|_{H^2(\RR)} \leq C |\epsilon|$ for some $C > 0$. Because the nonlinearity is polynomial,
the solution $W$ is a smooth ($C^{\infty}$) function of $\epsilon$.

Using this solution for $W$ in the bifurcation equation (\ref{bifurcation-eq}) and integrating by parts, we obtain
$$
F(s) = \frac{1}{2} \epsilon \int_{\RR} V'(x+s) ( u_1^2 + u_2^2 - 1) dx + \mathcal{O}(\epsilon^2) = 0.
$$
Since $V \in C^2(\RR)$ and conditions (\ref{condition-1}) and (\ref{condition-2}) are assumed,
the implicit function theorem for scalar functions yields that there exists a unique
solution of the bifurcation equation (\ref{bifurcation-eq}) near $x_0$ such that
$|s - x_0| \leq C |\epsilon|$ for some $C > 0$. This construction completes the proof of the theorem.
Bound (\ref{smallnest}) follows by the triangle inequality and the Sobolev embedding of $H^2(\RR)$ to $L^{\infty}(\RR)$.
\end{proof1}

To consider stability of persistent domain wall solutions in the small localized potential,
we need a technical result that ensures that property (b) of Theorem \ref{existence} persists for
small values of $\epsilon$.

\begin{lemma}
\label{lemma-positivity}
In addition to the conditions of Theorem \ref{theorem-persistence},
assume that $V \in L^1(\mathbb{R})$. Then, the  heteroclinic solutions in Theorem \ref{theorem-persistence}
satisfy $0 \leq u_1(x), u_2(x) \leq 1$ for all $x \in \RR$.
\end{lemma}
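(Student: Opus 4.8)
\begin{proof1}{\em of Lemma~\ref{lemma-positivity}.}
The plan is to reduce the statement to the two pointwise bounds $u_1,u_2\ge 0$ and $u_1^2+u_2^2\le 1$ (together these give $0\le u_j\le 1$), and to establish each by a maximum-principle argument based on the decomposition of Theorem~\ref{theorem-persistence}: write $U(x)=U_0(x-s)+W(x-s)$, where $U_0=(u_{1,0},u_{2,0})\in X$ is the unperturbed symmetric heteroclinic, $|s-x_0|\le C|\epsilon|$, and by (\ref{smallnest}) and the Sobolev embedding $\|W\|_{L^\infty(\RR)}\le C|\epsilon|$. From Theorems~\ref{existence}, \ref{apriori} and \ref{secondvar}(iv) we will use that $0<u_{j,0}<1$, that $u_{1,0}$ is strictly increasing with limits $0$ and $1$ while $u_{2,0}$ is strictly decreasing with limits $1$ and $0$, that $u_{1,0}^2+u_{2,0}^2\le 1$, and that $U_0$ converges exponentially to $\mathbf a$, $\mathbf b$.

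First I would prove nonnegativity. Fix $\delta\in(0,1)$ with $\gamma(1-\delta)^2>1$; for small $|\epsilon|$ let $\ell(\epsilon)<0$ solve $u_{1,0}(\ell)=2C|\epsilon|$, so that $\ell(\epsilon)\to-\infty$ as $\epsilon\to 0$. On $\{x\ge s+\ell(\epsilon)\}$ monotonicity gives $u_{1,0}(x-s)\ge 2C|\epsilon|$, hence $u_1(x)\ge C|\epsilon|>0$ there. On the complementary half-line $\mathcal Z=(-\infty,s+\ell(\epsilon)]$ we have $u_{1,0}(x-s)\le 2C|\epsilon|$, so $u_{2,0}(x-s)\ge u_{2,0}(\ell(\epsilon))\ge 1-\delta$ once $|\epsilon|$ is small (since $\ell(\epsilon)\to-\infty$), and therefore the zeroth-order coefficient of the first equation of (\ref{hc-potential}), $q_1:=\epsilon V+u_1^2+\gamma u_2^2-1$, satisfies $q_1\ge \gamma(1-\delta-C|\epsilon|)^2-1-|\epsilon|\,\|V\|_\infty>0$ on $\mathcal Z$ for $|\epsilon|$ small. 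Since $u_1$ solves $-u_1''+q_1u_1=0$ on $\mathcal Z$ with $u_1(s+\ell(\epsilon))>0$ and $u_1\to 0$ at $-\infty$, it cannot have an interior negative minimum $x_\ast$ there (at such a point $u_1''(x_\ast)\ge 0$ while $-u_1''(x_\ast)=-q_1(x_\ast)u_1(x_\ast)>0$), so $u_1\ge 0$ on $\RR$; the analogous argument near $+\infty$ gives $u_2\ge 0$. A zero of $u_j$ would be a double zero, so in fact $u_1,u_2>0$ on $\RR$.

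Next I would prove $u_1^2+u_2^2\le 1$. Writing $\varphi=u_1^2+u_2^2-1$ and $\rho=1+\varphi$, the equations (\ref{hc-potential}) give
$$
\tfrac12\varphi''=|u_1'|^2+|u_2'|^2+2(\gamma-1)u_1^2u_2^2+\rho\varphi+\epsilon V\rho\ \ge\ \rho\,(\varphi+\epsilon V),
$$
and since $\varphi\to 0$ at $\pm\infty$, a positive $\sup_\RR\varphi$ would be attained at a finite point, where $\varphi''\le 0$ and $\rho>0$ force $\sup_\RR\varphi\le |\epsilon|\,\|V\|_\infty$. To upgrade this crude bound to $\varphi\le 0$ I would split $\RR$ into a fixed core and two tails. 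On any fixed compact interval $[-R_0,R_0]$ one has $\varphi_0:=u_{1,0}^2+u_{2,0}^2-1\le-c(R_0)<0$, so $\varphi=\varphi_0(\cdot-s)+\bigl(2U_0(\cdot-s)\cdot W(\cdot-s)+|W(\cdot-s)|^2\bigr)<0$ on $[s-R_0,s+R_0]$ once $|\epsilon|<c(R_0)/(3C)$. On the tails $\{|x-s|\ge R_0\}$ one uses that $|\varphi_0(x-s)|$ decays exponentially (Theorem~\ref{existence}(e), Theorem~\ref{apriori}) together with exponential decay of the correction $W$: since $L_+$ has essential spectrum bounded away from $0$ (Theorem~\ref{secondvar}(iii)) and the inhomogeneity $-\epsilon V(U_0+W)$ in (\ref{inhomog-w}) is localized, $W$ decays exponentially, and a closer look (the slowest-decaying part of $W$ in a tail comes from the localized forcing $\epsilon V$ or from the coupling to the exponentially small off-diagonal entry $2\gamma u_{1,0}u_{2,0}$ of $L_+$, and in both cases decays at least as fast as $\varphi_0$) shows that $|2U_0\cdot W+|W|^2|\le\tfrac12|\varphi_0(\cdot-s)|$ on the tails for $|\epsilon|$ small, so $\varphi<0$ there too. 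Combining the three regions yields $\varphi\le 0$ on $\RR$, and with the first step $0\le u_1,u_2\le 1$.

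The main obstacle is the last step: the maximum principle by itself only pins $\varphi$ down to $O(\epsilon)$, and sharpening it to the exact bound $\varphi\le 0$ requires the quantitative matching of decay rates in the tails — showing that the $O(\epsilon)$ correction $W$, and hence $2U_0\cdot W+|W|^2$, decays no slower than the unperturbed quantity $\varphi_0$, which is what forces the sign of $\varphi$ in the far field to agree with that at $\epsilon=0$. This is where the exponential decay of $U_0$ from Theorem~\ref{existence}(e) and the localization of $V$ are used, and it is the technical heart of the argument.
\end{proof1}
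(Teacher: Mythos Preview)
Your approach is genuinely different from the paper's.  The paper does not use maximum-principle arguments at all: it invokes Levinson's theorem for linear systems (Coddington--Levinson, Proposition~8.1), applied to the first-order system associated with (\ref{hc-potential}), to conclude that the correction $W=(w_1,w_2)$ inherits the \emph{same} exponential decay rates as the deviations of $U_0$ from its limits, namely $w_1(x)\sim e^{\sqrt{\gamma-1}\,x}$ and $w_2(x)\sim e^{\sqrt{2}\,x}$ as $x\to-\infty$ (and symmetrically at $+\infty$).  The $L^1$ hypothesis on $V$ is exactly the integrability condition Levinson requires.  Once these matched rates are in hand, the $O(\epsilon)$ bound (\ref{smallnest}) forces the leading asymptotic coefficients of $u_1$, $1-u_2$ (and, by symmetry, $u_2$, $1-u_1$) to have the same sign as for $U_0$; on any compact set the strict inequalities for $U_0$ together with (\ref{smallnest}) suffice.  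Both bounds $0\le u_j$ and $u_j\le 1$ are thus obtained from a single asymptotic statement.

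Your maximum-principle argument for $u_j\ge 0$ is correct and is a pleasant alternative that avoids Levinson entirely for the lower bound (modulo the implicit use of $\|V\|_\infty$, which is not part of the stated hypotheses).  The gap is in the upper-bound step.  You correctly isolate the crux --- showing $|2U_0\cdot W+|W|^2|\le\tfrac12|\varphi_0|$ on the tails --- but the parenthetical justification does not establish it.  Near $-\infty$ the dominant contribution to $2U_0\cdot W$ is $2u_{2,0}w_2\approx 2w_2$, and the equation for $w_2$ (from (\ref{inhomog-w})) is, to leading order, $(-\partial_x^2+2)w_2\approx -\epsilon V$.  With only $V\in L^1\cap L^2\cap C^2$ the right-hand side need not decay exponentially, so the particular solution it generates need not either; you then cannot compare it against the exponentially small $\varphi_0$.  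In other words, your tail claim ``decays at least as fast as $\varphi_0$'' is precisely the matched-rate statement that the paper obtains from Levinson's theorem, and you have not supplied an independent proof of it.  So your route and the paper's route converge at exactly the same technical point, and the paper's citation of Levinson is what closes it.
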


\begin{proof}
Since $\gamma > 1$, the decay of the unperturbed domain wall solution
$U_0 = (u_1,u_2)$ of the system (\ref{hc}) to the equilibrium states
${\bf a}$ and ${\bf b}$ is exponential with the decay rates
$$
u_1(x) \sim e^{\sqrt{\gamma-1}x}, \quad 1-u_2(x) \sim e^{\sqrt{2}x}, \quad \mbox{\rm as} \quad x \to -\infty
$$
(see property (d) in Theorem \ref{existence}). If the perturbation term $W = (w_1,w_2)$
in the decomposition (\ref{decompositon}) also decays exponentially to zero with the same decay rate,
the assertion of the lemma follows from the smallness of $W$ in the bound (\ref{smallnest}) and
the property (b) of Theorem \ref{existence} for the unperturbed solution.
However, since $V \in L^1(\mathbb{R})$, the exponential decay of $W$ to zero with the decay rates
$$
w_1(x) \sim e^{\sqrt{\gamma-1}x}, \quad w_2(x) \sim e^{\sqrt{2}x}, \quad \mbox{\rm as} \quad x \to -\infty
$$
follows from Levinson's Theorem for differential equations (Proposition 8.1 in \cite{CL}).
\end{proof}

\vspace{0.25cm}

\begin{proof1}{\rm of Theorem \ref{theorem-stability-potential}.}
Spectral stability is considered again in the framework of the linear eigenvalue problem
\begin{equation}
L_+(\epsilon) \Phi_R = -\lambda \Phi_I, \quad L_-(\epsilon) \Phi_I = \lambda \Phi_R,
\label{eigenvalue-perturbed}
\end{equation}
where $L_{\pm}(\epsilon)$ include the perturbed domain wall solution $U = (u_1,u_2)$ as
well as the small potential $\epsilon V$. In particular, $L_{\pm}(\epsilon)$ are given by
\begin{equation}
\label{operator-L-plus-potential}
L_+(\epsilon) := \begin{pmatrix}  -\partial_x^2 + \epsilon V + 3u_1^2 +\gamma u_2^2-1 & 2 \gamma u_1 u_2 \\
2 \gamma u_1 u_2 & -\partial_x^2 + \epsilon V + \gamma u_1^2 + 3u_2^2-1 \end{pmatrix}
\end{equation}
and
\begin{equation}
\label{operator-L-minus-potential}
L_-(\epsilon) := \begin{pmatrix}  -\partial_x^2 + \epsilon V + u_1^2 +\gamma u_2^2-1 & 0 \\
0 & -\partial_x^2 + \epsilon V + \gamma u_1^2 + u_2^2-1 \end{pmatrix}
\end{equation}
These operators admit the power expansion $L_{\pm}(\epsilon) = L_{\pm}(0) + \epsilon L_{\pm}'(0) + \mathcal{O}(\epsilon^2)$
thanks to the smoothness of $U$ in $\epsilon$ in Theorem \ref{theorem-persistence}.

We first show that for small values of $\epsilon$, the operator $L_+(\epsilon)$ is strictly positive
and bounded away from zero if $\sigma > 0$ and has exactly one negative eigenvalue with the rest of spectrum bounded
away from zero if $\sigma < 0$. Since $0$ is the simplest and smallest eigenvalue of $L_+$, the result follows
from the perturbation expansions. In particular, let us define solutions of the linear inhomogeneous
equations
\begin{equation}\label{perturbation-equations}
\left.\begin{aligned}
- w''_1(x) + \left( 3 u_1^2 + \gamma u_2^2 -1\right) w_1 + 2 \gamma u_1 u_2 w_2 &= -V u_1, \\
- w''_2(x) + \left( \gamma u_1^2 + 3 u_2^2 -1\right) w_2 + 2 \gamma u_1 u_2 w_1 &= -V u_2.
\end{aligned}\right\} \quad \quad x\in\RR,
\end{equation}
where $(u_1,u_2)$ is the unperturbed domain wall solution of the system (\ref{hc}). Then, we have
$$
L_+'(0) = \begin{pmatrix}  V + 6 u_1 w_1 + 2 \gamma u_2 w_2 & 2 \gamma u_1 w_2 + 2 \gamma u_2 w_1 \\
2 \gamma u_1 w_2 + 2 \gamma u_2 w_1 & V + 2 \gamma u_1 w_1 + 6 u_2 w_2 \end{pmatrix}
$$
The isolated zero eigenvalue of $L_+(0)$ becomes positive (negative) eigenvalue of $L_+(\epsilon)$ for small
values of $\epsilon$ if $\sigma > 0$ ($\sigma < 0$), where
\begin{eqnarray*}
\sigma & = & \langle U', L_+'(0) U' \rangle \\
& = & \int_{\mathbb{R}} \left( V (u_1')^2 + V (u_2')^2 + 6 u_1 w_1 (u_1')^2 + 6 u_2 w_2 (u_2')^2 \right) dx \\
& \phantom{t} &
+ \int_{\RR} \left( 2 \gamma u_2 w_2 (u_1')^2 + 2 \gamma u_1 w_1 (u_2')^2 + 4 \gamma u_1 w_2 u_1' u_2' + 4 \gamma u_2 w_1 u_1' u_2' \right) dx
\end{eqnarray*}
Differentiating the inhomogeneous system (\ref{perturbation-equations}) in $x$ and
projecting it to $U'$, we reduce the previous expression for $\sigma$ to the form
\begin{eqnarray*}
\sigma = - \int_{\RR} V'( u_1 u_1' + u_2 u_2') dx = \frac{1}{2} \int_{\RR} V''(u_1^2 + u_2^2 - 1) dx,
\end{eqnarray*}
where integration by parts has been performed for $V \in C^2(\mathbb{R})$. Thus, the assertion on
the spectrum of $L_+(\epsilon)$ is proven.

Next, the spectrum of $L_-(\epsilon)$ is not affected for any small $\epsilon$ compared to
the statement of Theorem \ref{secondvar} because $L_-(\epsilon)$ is a diagonal composition
of Schr\"{o}dinger operators and $L_-(\epsilon) \Phi_{1,2} = 0$ with $\Phi_1 = (u_1,0)$ and
$\Phi_2 = (0, u_2)$, where $u_{1,2}$ are positive according to Lemma \ref{lemma-positivity}.
As a result, $\sigma(L_-(\epsilon)) = [0,\infty)$ as follows from the Sturm's Theorem.

As in the proof of Theorem \ref{theorem-stability}, we construct the generalized
eigenvalue problem
\begin{equation}
\label{generalized-potential}
L_-(\epsilon) \Phi_I = -\lambda^2 L_+^{-1}(\epsilon) \Phi_I,
\end{equation}
where $L_+^{-1}(\epsilon)$ exists without any projection operators for
any $\epsilon \neq 0$. If $\sigma > 0$, then $L_+^{-1}(\epsilon)$ is
strictly positive implying
$$
-\lambda^2 = \inf_{\Phi \in {\rm Dom}(L_-(\epsilon)), \Phi \neq 0}
\frac{\langle L_-(\epsilon) \Phi, \Phi \rangle}{\langle L_+^{-1}(\epsilon) \Phi, \Phi \rangle} \geq 0.
$$
as in \cite[p.468]{Gallo}. This yields stability of the heteroclinic solutions.
If $\sigma < 0$, $L_+^{-1}(\epsilon)$ has exactly one negative eigenvalue. As in Theorem 3.1 in \cite{Board},
this condition implies that there exists exactly one negative eigenvalue $-\lambda^2$
of the generalized eigenvalue problem (\ref{generalized-potential}). This yields instability of the heteroclinic solutions.
\end{proof1}

\end{document}